\def\url@leostyle{%
  \@ifundefined{selectfont}{\def\UrlFont{\sf}}{\def\UrlFont{\small}}}
\newcommand{\abholder}{MR1712629}
\newcommand{\absperc}{MR2073175}
\newcommand{\adapath}{PhysRevLett.83.1359}
\newcommand{\aizenmannumber}{MR1431856}
\newcommand{\aizenmansclim}{MR1635999}
\newcommand{\akn}{MR901151}
\newcommand{\bcksperc}{MR1868996}
\newcommand{\bchssi}{MR2155704}
\newcommand{\bchssii}{MR2165583}
\newcommand{\bchssiii}{MR2260845}
\newcommand{\bclperci}{MR2726646}
\newcommand{\bclpercii}{MR2726647}
\newcommand{\bhperc}{MR0091567}
\newcommand{\beffaradcopin}{arXiv:1107.0158}
\newcommand{\beffara}{MR2310300}
\newcommand{\beffarauniv}{MR2477376}
\newcommand{\beffarahdim}{MR2078552}
\newcommand{\beffaradim}{MR2435854}
\newcommand{\benjaminikozma}{arXiv:1105.2638}
\newcommand{\bk}{MR799280}
\newcommand{\bkperchd}{MR1261058}
\newcommand{\blpsaop}{MR1733151}
\newcommand{\blpsgafa}{MR1675890}
\newcommand{\bsvoronoi}{MR1646475}
\newcommand{\burtonkeane}{MR990777}
\newcommand{\chayeslei}{MR2337476}
\newcommand{\cmnw}{MR2322705}
\newcommand{\cmnwfull}{MR2249794}
\newcommand{\caratheodory}{MR1511737}
\newcommand{\cardy}{MR1151081}
\newcommand{\cardylec}{arXiv:math-ph/0103018}
\newcommand{\chelkaksmirnovuniv}{arXiv:0910.2045}
\newcommand{\chelkaksmirnovising}{chelkak.smirnov:ising}
\newcommand{\dcssaw}{arXiv:1007.0575}
\newcommand{\dcsmirnovsurvey}{arXiv:1109.1549}
\newcommand{\dklp}{RSA:RSA20342}
\newcommand{\dubedatwatts}{MR2226888}
\newcommand{\dubedatdual}{MR2118865}
\newcommand{\dubedatdualii}{MR2571956}
\newcommand{\epstein}{MR614728}
\newcommand{\fkg}{MR0309498}
\newcommand{\gsperc}{MR1675079}
\newcommand{\hshighdim}{MR1043524}
\newcommand{\hslace}{MR1283177}
\newcommand{\hssaw}{MR1356575}
\newcommand{\harrisineq}{MR0115221}
\newcommand{\harrislbd}{MR0115221}
\newcommand{\honglersmirnov}{arXiv:1008.2645}
\newcommand{\ikhlefrajabpourat}{MR2754707}
\newcommand{\kagernienhuis}{MR2065722}
\newcommand{\kakutani}{MR0014647}
\newcommand{\kestensq}{MR575895}
\newcommand{\kestenhd}{MR1064563}
\newcommand{\kenyon}{MR1782431}
\newcommand{\kenyongff}{MR1872739}
\newcommand{\kenyonloops}{arXiv:1105.4158}
\newcommand{\kszwords}{MR1637089}
\newcommand{\kwdimers}{MR2737268}
\newcommand{\lawlerparkcity}{MR2523461}
\newcommand{\lawlersheffield}{arXiv:0906.3804}
\newcommand{\llnhcap}{MR2576752}
\newcommand{\lpsa}{MR1230963}
\newcommand{\lswlerw}{MR2044671}
\newcommand{\lswonearm}{MR1887622}
\newcommand{\lswrestr}{MR1992830}
\newcommand{\lswintersecI}{MR1879850}
\newcommand{\lswintersecII}{MR1879851}
\newcommand{\lswintersecIII}{MR1899232}
\newcommand{\lwuniv}{MR1796962}
\newcommand{\millergl}{arXiv:1002.0381}
\newcommand{\milleruniv}{arXiv:1010.1356}
\newcommand{\millersheffield}{miller.sheffield}
\newcommand{\mnwcardy}{mendelson.nachmias.watson}
\newcommand{\mrloewner}{MR2163382}
\newcommand{\nachmias}{MR2570320}
\newcommand{\reimer}{MR1751301}
\newcommand{\rohdeschramm}{MR2153402}
\newcommand{\russo}{MR0488383}
\newcommand{\schramm}{MR1776084}
\newcommand{\schrammmadrid}{MR2334202}
\newcommand{\schrammwilson}{MR2188260}
\newcommand{\schrammperc}{MR1871700}
\newcommand{\schrammwatts}{arXiv:1003.3271}
\newcommand{\sheffieldtreescle}{MR2494457}
\newcommand{\ssexplorer}{MR2184093}
\newcommand{\ssdgff}{MR2486487}
\newcommand{\ssgff}{arXiv:1008.2447}
\newcommand{\skorohoddudley}{MR0230338}
\newcommand{\smirnovrci}{MR2680496}
\newcommand{\smirnovrcii}{smirnov:rc.ii}
\newcommand{\ksrciii}{kemppainen.smirnov:rc.iii}
\newcommand{\smirnovpercarxiv}{arXiv:0909.4499} 
\newcommand{\smirnovperc}{MR1851632}
\newcommand{\smirnovtowards}{MR2275653}
\newcommand{\smirnovicmplisbon}{MR2227824}
\newcommand{\smirnovicmhyderabad}{arXiv:1009.6077}
\newcommand{\swperc}{MR0494572}
\newcommand{\swcle}{arXiv:1006.2374}
\newcommand{\wernerparkcity}{MR2523462}
\newcommand{\wernerstflour}{MR2079672}
\newcommand{\wilsonxor}{arXiv:1102.3782}
\newcommand{\zhan}{MR2439609}
\newcommand{\zhanii}{MR2682265}
\newcommand{\ahlfors}{MR510197}
\newcommand{\ahlforsconf}{MR2730573}
\newcommand{\billingsleyc}{MR1700749}
\newcommand{\bollobasrg}{MR1864966}
\newcommand{\brperc}{MR2283880}
\newcommand{\grimmett}{MR1707339}
\newcommand{\grimmettprob}{MR2723356}
\newcommand{\kesten}{MR692943}
\newcommand{\ks}{MR1121940}
\newcommand{\lawler}{MR2129588}
\newcommand{\levy}{MR1188411}
\newcommand{\liebloss}{MR1817225}
\newcommand{\mpbm}{MR2604525}
\newcommand{\pommerenke}{MR1217706}
\newcommand{\ssf}{MR1970295}
\newcommand{\sscplx}{MR1976398}
\newcommand{\stauffer}{nla.cat-vn1787821}
\DeclareMathOperator{\capacity}{cap}
\DeclareMathOperator{\hcap}{hcap}
\DeclareMathOperator{\SL}{SL}
\newcommand{\site}{\text{\textup{s}}}
\newcommand{\bond}{\text{\textup{b}}}
\newcommand{\ann}[3]{A\left(#1;#2,#3\right)}
\newcommand{\bm}{W}
\newcommand{\clos}[1]{\overline{#1}}
\newcommand{\dN}{\mathcal{N}}
\newcommand{\doint}[1]{\oint_{#1}^{\text{\textup d}}}
\newcommand{\inter}[1]{#1^\circ}
\newcommand{\sle}{\text{SLE}}
\newcommand{\cle}{\text{CLE}}
\newcommand{\dhaus}{{d_\cH}} 
\newcommand{\dcurve}{{d_\cU}} 
\newcommand{\boxdim}{\ol{\dim}_B} 
\newcommand{\crv}[1]{\cC_{#1}} 
\newcommand{\crvset}[1]{\Om_{#1}} 
\newcommand{\cconv}{\rightsquigarrow}
\begin{document}

\begin{frontmatter}
\title{Conformally invariant scaling limits in planar critical percolation\thanksref{T1}}
\thankstext{T1}{This is an original survey paper.}
\runtitle{Conformally invariant scaling limits in percolation}

\begin{aug}
\author{\fnms{Nike} \snm{Sun}\thanksref{t1}\ead[label=e1]{nikesun@stanford.edu}}

\address{Department of Statistics, Stanford University \\ 390 Serra Mall \\ Stanford, CA 94305\\
\printead{e1}}

\thankstext{t1}{Partially supported by Department of Defense (AFRL/AFOSR) NDSEG Fellowship.}
\runauthor{N. Sun}

\affiliation{Stanford University}
\end{aug}

\begin{abstract}
This is an introductory account of the emergence of conformal
invariance in the scaling limit of planar critical percolation. We give
an exposition of Smirnov's theorem (2001) on the conformal invariance
of crossing probabilities in site percolation on the triangular
lattice. We also give an introductory account of Schramm-Loewner
evolutions ($\mathrm{SLE}_{\kappa}$), a one-parameter family of conformally
invariant random curves discovered by Schramm (2000). The article is
organized around the aim of proving the result, due to Smirnov (2001)
and to Camia and Newman (2007), that the percolation exploration path
converges in the scaling limit to chordal $\mathrm{SLE}_6$. No prior knowledge
is assumed beyond some general complex analysis and probability theory.
\end{abstract}
\begin{keyword}[class=AMS]
\kwd[Primary ]{60K35}
\kwd{30C35}
\kwd[; secondary ]{60J65}
\end{keyword}

\begin{keyword}
\kwd{Conformally invariant scaling limits}
\kwd{percolation}
\kwd{Schramm-Loewner evolutions}
\kwd{preharmonicity}
\kwd{preholomorphicity}
\kwd{percolation exploration path}
\end{keyword}

\received{\smonth{8} \syear{2011}}

\end{frontmatter}

\tableofcontents

\section{Introduction}
\label{ch:intro}

The field of {\bf percolation theory} is motivated by a simple physical question: does water flow through a rock? To study this question, Broadbent and Hammersley \cite{\bhperc} developed in 1957 the following model for a random porous medium. View the material as an undirected graph $G$, a set of vertices $V$ joined by edges $E$. In the percolation literature the vertices are called {\bf sites} and the edges {\bf bonds}. In {\bf site percolation}, each site is independently chosen to be {\bf open} or {\bf closed} with probabilities $p$ and $1-p$ respectively (we refer to $p$ as the ``site probability''). The closed sites are regarded as blocked; the open sites, together with the bonds joining them, form the {\bf open subgraph}, through which water can flow. Percolation theory is the study of the connected components of the open subgraph, called the {\bf open clusters}. Fig.~\ref{fig:intro.open.subg} shows site percolation at $p=1/2$ on the triangular lattice $T$ (blue = open, yellow = closed), with the open subgraph
highlighted.\looseness=-1

Formally, a {\bf (site) percolation configuration} is a random element
$\om$ of $(\Om,\cF)$ where $\Om=\set{0,1}^V$ ($0=$ closed, $1=$ open) and $\cF$ is the usual product $\si$-algebra generated by the projections $\om\mapsto\om_i$. Let $\P_p\equiv\P^\site_p$ denote the probability measure on $(\Om,\cF)$ induced by percolation with site probability $p$.

\begin{figure}
\centering
\includegraphics[width=0.6\textwidth]{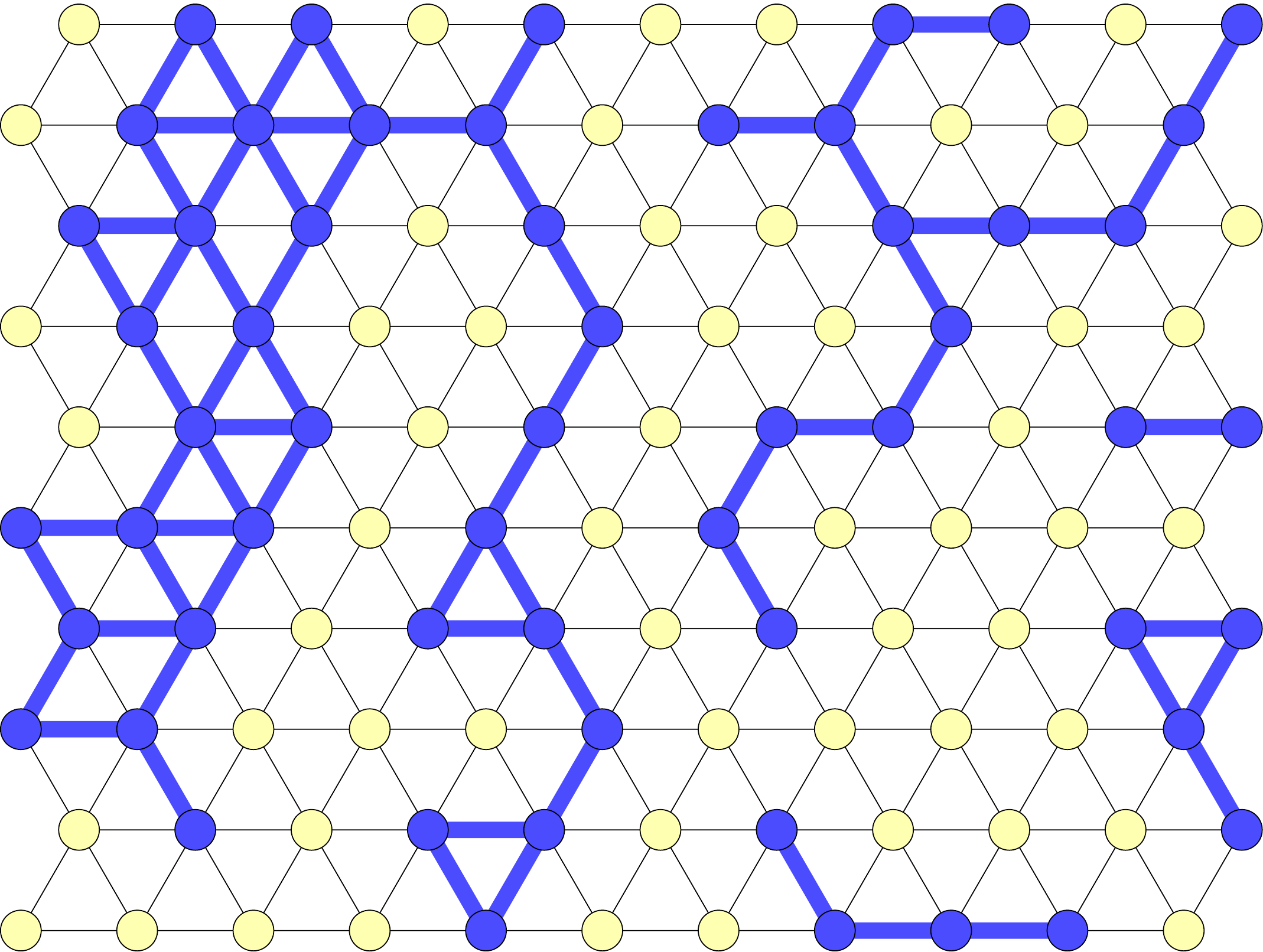}
\caption{Site percolation on $T$}
\label{fig:intro.open.subg}
\end{figure}

This basic model is of course open to any number of variations. An important one is {\bf bond percolation}, defined similarly except that edges (bonds) rather than vertices are chosen to be open independently with probability $p$, and the open subgraph is induced by the open edges. Denote the resulting probability measure on $\set{0,1}^E$ by $\P^\bond_p$. In the physical interpretation, in site percolation, water is held mainly in the sites or ``pockets'' and flows between adjacent pockets --- whereas in bond percolation, water is held mainly in the bonds or ``channels,'' and the \emph{sites} express the adjacency of channels. Historically, bond percolation has been studied more extensively than site percolation; however, bond percolation on a graph $G$ is equivalent to site percolation on the covering graph of~$G$.

Percolation is a minimal model which nonetheless captures aspects of the mechanism of interest --- that of water seeping through rock --- and provides qualitative and quantitative predictions. Since its introduction it has developed into an extremely rich subject, extensively studied by physicists and mathematicians. In this article we focus on one special facet of the theory, and so will certainly miss mentioning many interesting results. The interested reader is referred to the books \cite{\kesten,\grimmett,\brperc,\grimmettprob} for accounts of the mathematical theory. For an introduction to the extensive physics literature see \cite{\stauffer}.

\subsection{Critical percolation}

In percolation, a natural property to consider is the existence of an infinite open cluster: suppose $G=(V,E)$ is countably infinite and connected, and consider site percolation on $G$. For each site $x\in G$ let $C_x$ denote the open cluster containing $x$ (with $C_x = \emptyset$ if $x$ does not belong to an open cluster). We say that $x$ {\bf percolates} if $\abs{C_x}=\infty$; the physical interpretation is that $x$ ``gets wet.'' The {\bf (site) percolation probability} is defined to be
$$\thet_x(p)\equiv \thet_x^\site(p) \equiv\P_p^\site(|C_x|=\infty),$$
a non-decreasing function of $p$. Because $G$ is connected, for any $x,y\in G$, $\thet_x(p)$ and $\thet_y(p)$ must be both positive or both zero. Thus we can define a \textbf{critical (site) probability} for the graph $G$ as
$$p_c \equiv p_c^\site(G) \equiv \inf \{p : \thet_x(p) > 0\},$$
where $x$ is any site in $G$. The analogous quantities for bond percolation are denoted $\thet^\bond_x(p)$ and $p^\bond_c$.

We say {\bf percolation} occurs in the graph if $|C_x|=\infty$ for some $x\in G$. A common feature of many models in statistical physics is the {\bf phase transition}, where physical properties of the system undergo abrupt changes as a parameter passes a critical theshold. This is the case in percolation if $G$ has a translation invariance, for example, if $G$ is a lattice, or $G=G_0\times\Z$ for some base graph $G_0$: in this setting, the Kolmogorov zero-one law implies that the total number of infinite open clusters is a constant in $\set{0,1,\infty}$. The number is zero below $p_c$ and positive above $p_c$, so a sharp transition occurs at $p_c$. If $G$ is a lattice, $p_c\in(0,1)$ (e.g.\ by \eqref{eq:crit.prob}), and if an infinite open cluster exists then it is \emph{unique} (\cite{\akn}, see also \cite{\burtonkeane,\brperc}). For an example of a graph $G_0\times\Z$ with infinitely many infinite open clusters see \cite{\benjaminikozma}.

The effort to determine exact values for $p_c$ in various models, and to complete the picture of what happens at or near $p_c$, has led to a wealth of interesting probabilistic and combinatorial techniques. It was shown by Grimmett and Stacey \cite{\gsperc} that if $G=(V,E)$ is countably infinite and connected with maximum vertex degree $\De<\infty$, then
\beq \label{eq:crit.prob}
\f{1}{\De-1}
\le p^\bond_c
\le p^\site_c
\le 1-(1-p^\bond_c)^{\De-1}.
\eeq
Kesten proved that $p_c^\bond(\Z^2)=1/2$ \cite{\kestensq}. He also proved that $p_c^\bond(\Z^d) = (1+o(1))/(2d)$ as $d\to\infty$ \cite{\kestenhd}; better estimates were obtained by Hara and Slade (\cite{\hshighdim,\hslace,\hssaw}, see also \cite{\bkperchd}). For $\Lm=T$, Kesten showed $p_c = 1/2$ \cite{\kesten}. There are still famous open problems remaining in this field; for example, it is conjectured that
$\thet^\bond_x(p^\bond_c(\Z^d))=0$ for all $d\ge2$, but this has been proved only for $d=2$ \cite{\harrislbd,\kestensq} and $d\ge19$ \cite{\hshighdim} (see also \cite[Ch.~10-11]{\grimmett}). It is also known that $\thet_x(p_c(G))=0$ for both site and bond percolation on on any $G$ which is a Cayley graph of a finitely generated nonamenable group \cite{\blpsaop,\blpsgafa}.

The percolation phenomenon is also defined for a sequence of finite graphs $G_n=(V_n,E_n)$ with $|V_n|\to\infty$, where the analogue of the infinite component is the linear-sized or ``giant'' component. This has been studied most famously in the case of the Erd\dacute{o}s-R\'enyi random graphs ($G_n$ is the complete graph $K_n$), where it was found that around $p_n=1/n$ the size of the largest component has a ``double jump'' from $O(\log n)$ to $n^{2/3}$ to linear; see \cite{\bollobasrg} for historical background and references. Extremely detailed results are known about the structure of the open subgraph near criticality (see e.g.\ \cite{\dklp} and the references therein). There has also been work to obtain analogous results for other graph sequences: for example for increasing boxes in $\Z^d$ \cite{\bcksperc}, for vertex-transitive graphs (\cite{\nachmias,\bchssi,\bchssii,\bchssiii}), and for expander graphs \cite{\absperc}.

\subsection{Conformal invariance}

The purpose of this article is to explore the limiting structure of the percolation configuration in the {\bf scaling limit} of increasingly fine lattice approximations to a fixed continuous planar {\bf domain} $D$ (a nonempty proper open subset of $\C$ which is connected and simply connected). That is, for a planar lattice $\Lm$, we consider finite subgraphs $G^\de\subset \de\Lm$ which ``converge'' to $D$ as the mesh $\de$ decreases to zero (to be formally defined later).

It has been predicted by physicists that many classical models (percolation, Ising, FK) at criticality have scaling limits which are \emph{conformally invariant} and \emph{universal}. Recall that if $U\subseteq\C$ is an open set, $f:U\to\C$ is said to be {\bf conformal} if it is holomorphic (complex-differentiable) and injective.\footnote{Some authors more broadly define a conformal map to be any map which is holomorphic with non-vanishing derivative. Basic facts from complex analysis used in this article may be found in e.g.\ \cite{\ahlfors,\sscplx}.} Conformal maps are so called because they are rigid, in the sense that they behave locally as a rotation-dilation. The Riemann mapping theorem states that for any planar domains $D,D'$, there is a conformal bijection $\vph:D\to D'$. Roughly speaking, {\bf conformal invariance} means that \emph{the limiting (random) behavior of the model on $D'$ is the same (in law) as the image under $\vph$ of the limiting behavior on $D$}. {\bf Universality} means that the limiting behavior is not lattice-dependent.

The physics prediction seems quite surprising because the lattices themselves are certainly not conformally invariant. However, one might expect that lattice percolation on increasingly small scales becomes essentially ``locally determined,'' with the global lattice structure becoming insignificant. Conformal maps behave locally as rotation-dilations and so preserve local structure, so the conformal invariance property means heuristically that the scaling limit of lattice percolation is locally scale-invariant and rotation-invariant. Scale-invariance follows essentially by definition of the scaling limit, and rotation-invariance can be hoped for based on the symmetry and homogeneity of the lattice.

In fact there is a classical example of a conformally invariant scaling limit: the planar Brownian motion, which we discuss in \S\ref{ch:bm}. This is the process $\bm_t = \bm_t^1 + i \bm_t^2$ where the $\bm_t^i$ are independent standard Brownian motions in $\R$. By Donsker's invariance principle, the universal scaling limit of planar random walks with finite variance is $\mu t + A\bm_t$ where $\mu\in\R^2$, $A\in\R^{2\times 2}$ (see e.g.\ \cite{\ks}). Using the idea of ``local determination'' described above, L\'evy deduced in the 1930s that planar Brownian motion is conformally invariant, up to a time reparametrization \cite{\levy}.  L\'evy's proof, however, was not completely rigorous, and most modern proofs of this result use the theory of stochastic integrals (developed by It\=o in the 1950s). Underlying this conformal invariance are some important connections between Brownian motion and harmonic functions, and we will make precise some of these connections which were first observed by Kakutani \cite{\kakutani}.
\vfill

\subsection{The percolation scaling limit}

This article describes two major breakthroughs, due to Schramm \cite{\schramm} and Smirnov \cite{\smirnovperc,\smirnovpercarxiv}, which gave the rigorous identification in the early 2000s of the scaling limit of critical percolation on the triangular lattice $T$.

\subsubsection{Smirnov's theorem on crossing probabilities}

Langlands, Pouliot, and Saint-Aubin \cite{\lpsa}, based on experimental observations and after conversations with Aizenman, conjectured that critical lattice percolation has a conformally invariant scaling limit. Some mathematical evidence was provided by Benjamini and Schramm \cite{\bsvoronoi}, who proved that a different but related model, Voronoi percolation, is invariant with respect to a conformal change of metric.

Using non-rigorous methods of conformal field theory (CFT), physicists were able to give very precise predictions about various quantities of interest in planar critical percolation. Cardy \cite{\cardy,\cardylec} notably derived an exact formula for the (hypothetical) limiting probability of an open crossing between disjoint boundary arcs of a planar domain. Carleson made the important observation that this formula has a remarkably simple form when the domain is an equilateral triangle (see \S\ref{sec:perc.completing}). However, for years mathematicians were unable to rigorously justify the CFT methods used in Cardy's derivation.

In 2001, Smirnov \cite{\smirnovpercarxiv, \smirnovperc} proved that for site percolation on the triangular lattice $T$, the limiting crossing probability exists and is conformally invariant, satisfying Cardy's formula. The purpose of \S\ref{ch:perc} is to give an exposition of this result. The proof is based on the discovery of ``preharmonic'' functions which encode the crossing probability and converge in the scaling limit to conformal invariants of the domain.\footnote{We use the term preharmonic rather than ``discrete harmonic,'' which also appears in the literature, to avoid confusion with the classical meaning of discrete harmonic (a function whose value at any vertex is the average of the neighboring values) which is not necessarily what is meant here.} Although the percolation scaling limit is believed to be universal, special symmetries of the triangular lattice play a crucial role in Smirnov's proof, and the result has not been extended to other lattices. For recent work on this question see \cite{\beffarauniv,\chayeslei,\bclperci,\bclpercii}.

The exposition of Smirnov's theorem in \S\ref{ch:perc} is partly based on the one in \cite[Ch.~7]{\brperc}. For different perspectives (in addition to the original works of Smirnov) see \cite{\beffara,\grimmettprob,\smirnovicmplisbon}.

The general principle of preharmonicity and preholomorphicity has been further developed by Chelkak, Hongler, Kemppainen, and Smirnov in establishing conformal invariance in the scaling limit of the critical Ising and FK models \cite{\smirnovtowards,\smirnovrci,\smirnovrcii,\ksrciii,\chelkaksmirnovuniv,\chelkaksmirnovising,\honglersmirnov}. Discrete complex analysis appears also in the work of Duminil-Copin and Smirnov \cite{\dcssaw} determining the connective constant of the hexagonal lattice, which makes substantial progress towards establishing a conformally invariant scaling limit for the self-avoiding walk (SAW). For a more general discussion and references see \cite{\smirnovicmhyderabad,\dcsmirnovsurvey}.

\subsubsection{Schramm-Loewner evolutions}

While the notion of a limiting crossing probability is easy to define (though it may not exist), it is not immediately clear how to formally define the ``limiting percolation configuration.'' This notion is discussed in the work of Aizenman \cite{\aizenmannumber,\aizenmansclim}, and the 1999 work of Aizenman and Burchard \cite{\abholder} shows how to obtain \emph{subsequential} scaling limits of the percolation configuration. At the time, no direct construction for the limiting object --- that is, a construction not involving limits of discrete systems --- was available.

Such a construction was discovered in 1999-2000 when, in the course of studying the scaling limit of the loop-erased random walk (LERW), Schramm gave an explicit mathematical description of a one-parameter family of conformally invariant random curves, now called the \emph{Schramm-Loewner evolutions} ($\sle$). These curves are characterized by simple axioms which identify them as essentially the universal candidate for the scaling limits of macroscopic interfaces in planar models.

The theory of $\sle$ contains some very beautiful mathematics and is one of the major developments of probability theory within the past decade, and \S\ref{ch:sle} aims to give an accessible introduction. Here is a brief preview, glossing over all technical details: the $\sle$ are a one-parameter family of self-avoiding\footnote{A formal definition appears in \S\ref{sec:lde.sle}. Self-avoiding curves are \emph{not} necessarily simple; indeed, the scaling limit of the percolation interface will have many double points. Informally, a self-avoiding curve is a curve without transversal self-crossings.} random planar curves $\gam \equiv \gam(D;a,b)$ traveling from $a$ to $b$ in domain $D$, where $a$ is in the boundary $\pd D$ and $b$ is either in $D$ ({\bf radial}) or elsewhere on $\pd D$ ({\bf chordal}). The curves satisfy two axioms:
\begin{enumerate}[(1)]
\item {\bf Conformal invariance}: if $\vph$ is a conformal map defined on domain $D$, $\vph\gam(D;a,b)$ has the same law as $\gam(\vph D;\vph a,\vph b)$.

\item {\bf Domain Markov property}: conditioned on $\gam[0,t]$, the remaining curve has the same law as $\gam(D(t),\gam(t),b)$ where $D(t)$ is the unique connected component of $D\setminus \gam[0,t]$ whose closure contains $b$ (the ``slit domain'').
\end{enumerate}
Conformal invariance is expected by physicists as already mentioned, and typically the domain Markov property holds in the discrete setting and is believed to pass to the scaling limit.

Schramm realized that these two properties essentially fully determine the distribution of the curve. His discovery is based on the \emph{Loewner differential equation} (LDE), which describes the evolution of a self-avoiding curve $\gam(D;a,b)$ through the evolution of the corresponding conformal mappings $g_t : D(t) \to D$ (the ``slit mappings''). If the domain is the upper-half plane $\H$ with marked boundary points $0,\infty$, we will see that if the maps $g_t$ are normalized to ``behave like the identity'' near $\infty$, then under a suitable time parametrization they satisfy the \emph{chordal LDE}
$$\dot{g}_t(z)=\frac{2}{g_t(z)-u_t},\quad g_0(z)=z \quad\forall z\in\H,$$
where $u_t\equiv g_t(\gam(t))$ is a continuous real-valued process, called the \emph{driving function}. (The radial version of the LDE was developed by Charles Loewner in 1923 and used by him to prove a case of the Bieberbach conjecture; see \cite{\ahlforsconf}.) This equation is remarkable because it encodes the planar curve $\gam$ in the one-dimensional process $u_t$. Given $u_t$, one can recover the original curve $\gam$ by solving the ordinary differential equation above for each $z$ up to time $t$, and setting $\gam(t)=g_t^{-1}(u_t)$.

\begin{figure}
\subfloat[Original $\sle$ curve $\gam=\gam_{D,a}$]{\label{fig:domain.mp}\includegraphics[width=0.47\textwidth]{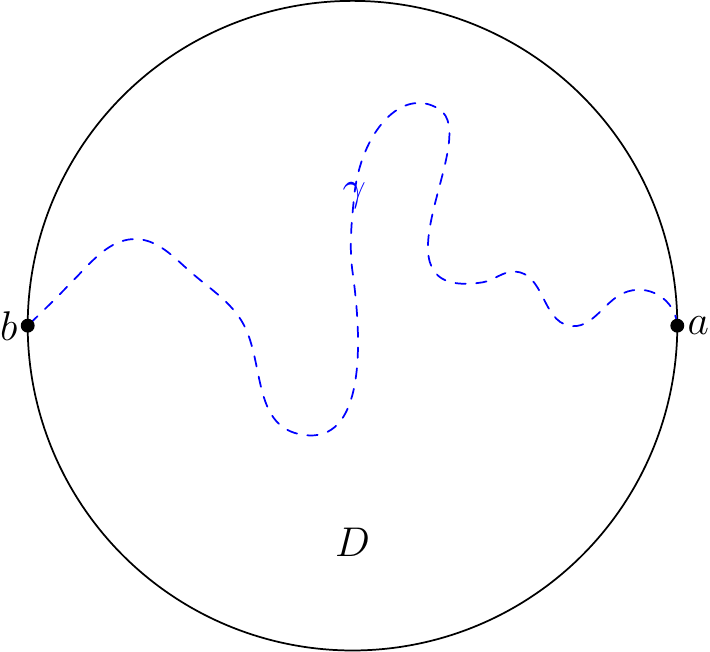}}
\qquad
\subfloat[{$\gam[t,\infty)$ conditioned on $\gam[0,t]$}]{\label{fig:domain.mp.cond}\includegraphics[width=0.47\textwidth]{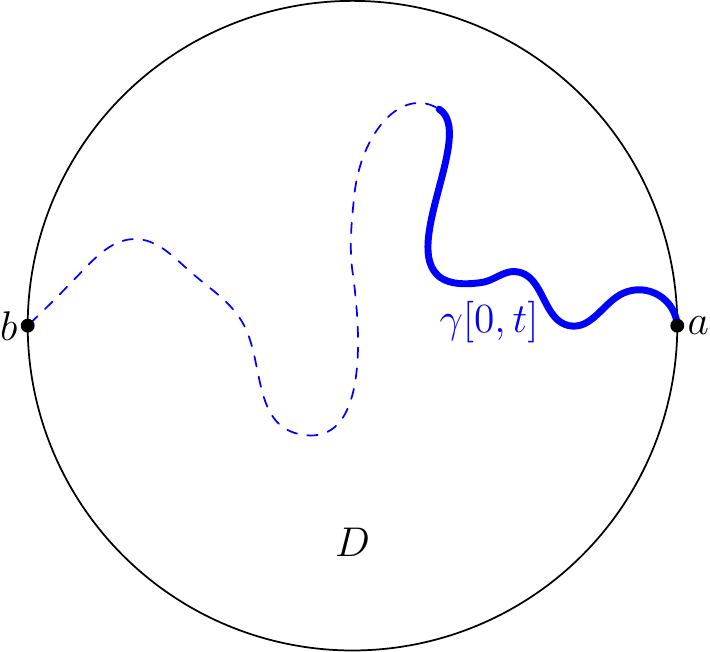}}
\\
\subfloat[{$\gam_{D(t),\gam(t)}$ where $D(t)=D\setminus\gam[0,t]$}]{\label{fig:domain.mp.slit}\includegraphics[width=0.47\textwidth]{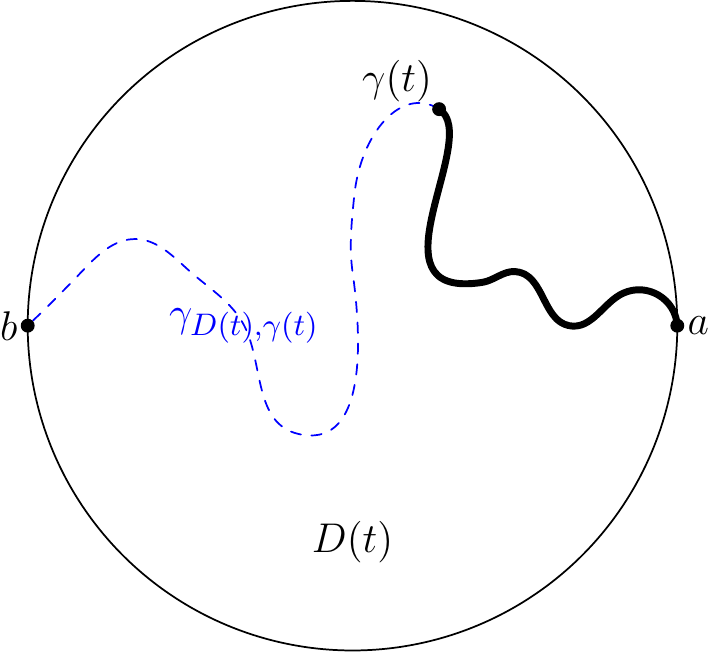}}
\qquad
\subfloat[{$\gam_{D(t),\gam(t)}$ mapped to original domain}]{\label{fig:domain.mp.mapped}\includegraphics[width=0.47\textwidth]{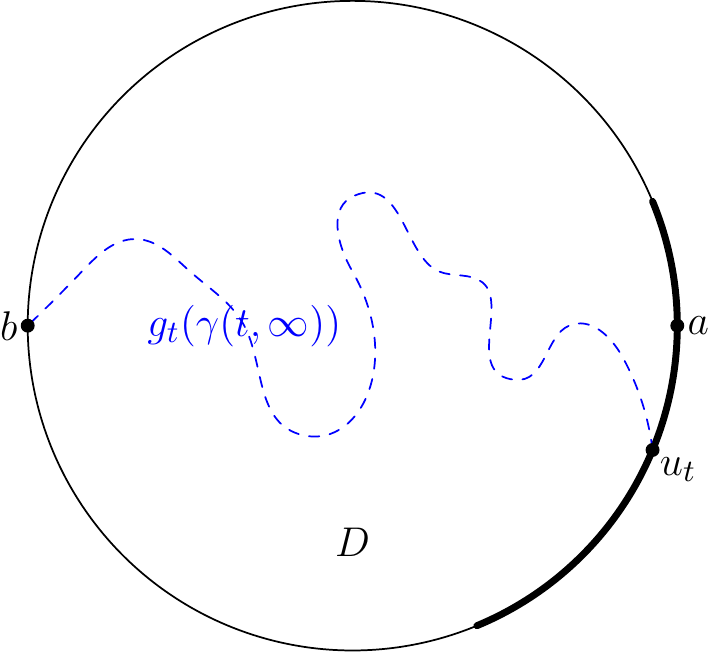}}
\caption{Domain Markov property}
\end{figure}

The $\sle$ axioms imply that if we condition on $\gam[0,t]$, the image of the remaining curve under $g_t$ is distributed as $u_t+\tilde\gam$ where $\tilde\gam$ is an independent realization of $\gam$. Figs.~\ref{fig:domain.mp} through~\ref{fig:domain.mp.mapped} illustrate this idea. As Schramm noted, this implies that \emph{$u_t$ must be a Brownian motion}, $u_t = \mu t + \sqrt{\ka}\bm_t$. Further, under symmetry assumptions $u$ will have zero drift. The Schramm-Loewner evolutions $\sle_\ka$ are the curves recovered from the LDE with driving function $u_t = \sqrt{\ka}\bm_t$.

In fact, it will require work to show that ``$u_t=g_t(\gam(t))$'' is even well-defined, and it is not trivial to find conditions under which a true curve is recovered from the LDE. A detailed study of the geometric properties of deterministic Loewner evolutions may be found in \cite{\mrloewner}. That the $\sle_\ka$ processes are true curves is a difficult theorem, proved for $\ka=8$ in \cite{\lswlerw} and for $\ka\ne8$ in \cite{\rohdeschramm} (see \cite[Ch.~7]{\lawler}). We will not make use of this fact.

In \S\ref{ch:sle} we follow for the most part Lawler's book \cite{\lawler}, which contains far more information than can be covered here. See also the lecture notes \cite{\wernerstflour,\lawlerparkcity}. In the decade since Schramm's original paper there have been many works investigating properties of $\sle$, e.g.\ the Hausdorff dimension \cite{\beffarahdim,\beffaradim}. Connections to (planar) Brownian motion are discussed in \cite{\lswintersecI,\lswintersecII,\lswintersecIII,\lswrestr}.  There has been work on characterizing more measures with conformal invariance properties, e.g.\ the restriction measures of \cite{\lswrestr}. Sheffield and Werner have given a characterization of conformally invariant loop configurations, the {\bf conformal loop ensemble} ($\cle$) \cite{\swcle}.

\subsubsection{\texorpdfstring{Percolation exploration path and convergence to $\sle_6$}{Percolation exploration path and convergence to SLE(6)}}

Schramm conjectured that interfaces in the percolation model converge to forms of $\sle_6$, and the conformal invariance of crossing probabilities was the key to proving this result. In his work \cite{\smirnovperc,\smirnovpercarxiv}, Smirnov outlined a proof for the conformal invariance of the \emph{full percolation configuration} (as a collection of nested curves). His outline was later expanded into detailed proofs in work by Camia and Newman \cite{\cmnw,\cmnwfull}.

The proof for the full configuration is beyond the scope of this article, and instead we focus on a single macroscopic interface. The percolation \emph{exploration path} is defined roughly as follows (the formal definition appears in \S\ref{sec:perc.end}): fix two points $a,b$ on the boundary of a simply connected domain. Fix all the hexagons on the counterclockwise arc $\ol{ab}$ to be closed, and all those on $\ol{ba}$ to be open. The exploration path is the interface curve which separates the closed cluster touching $\ol{ab}$ from the open cluster touching $\ol{ba}$. Fig.~\ref{fig:explore} shows part of an exploration path traveling in $\H$ between 0 and $\infty$. The purpose of \S\ref{ch:scaling}~and~\S\ref{ch:explore} is to prove that the exploration path converges to chordal $\sle_6$.

\begin{figure}
\centering
\includegraphics[width=\linewidth]{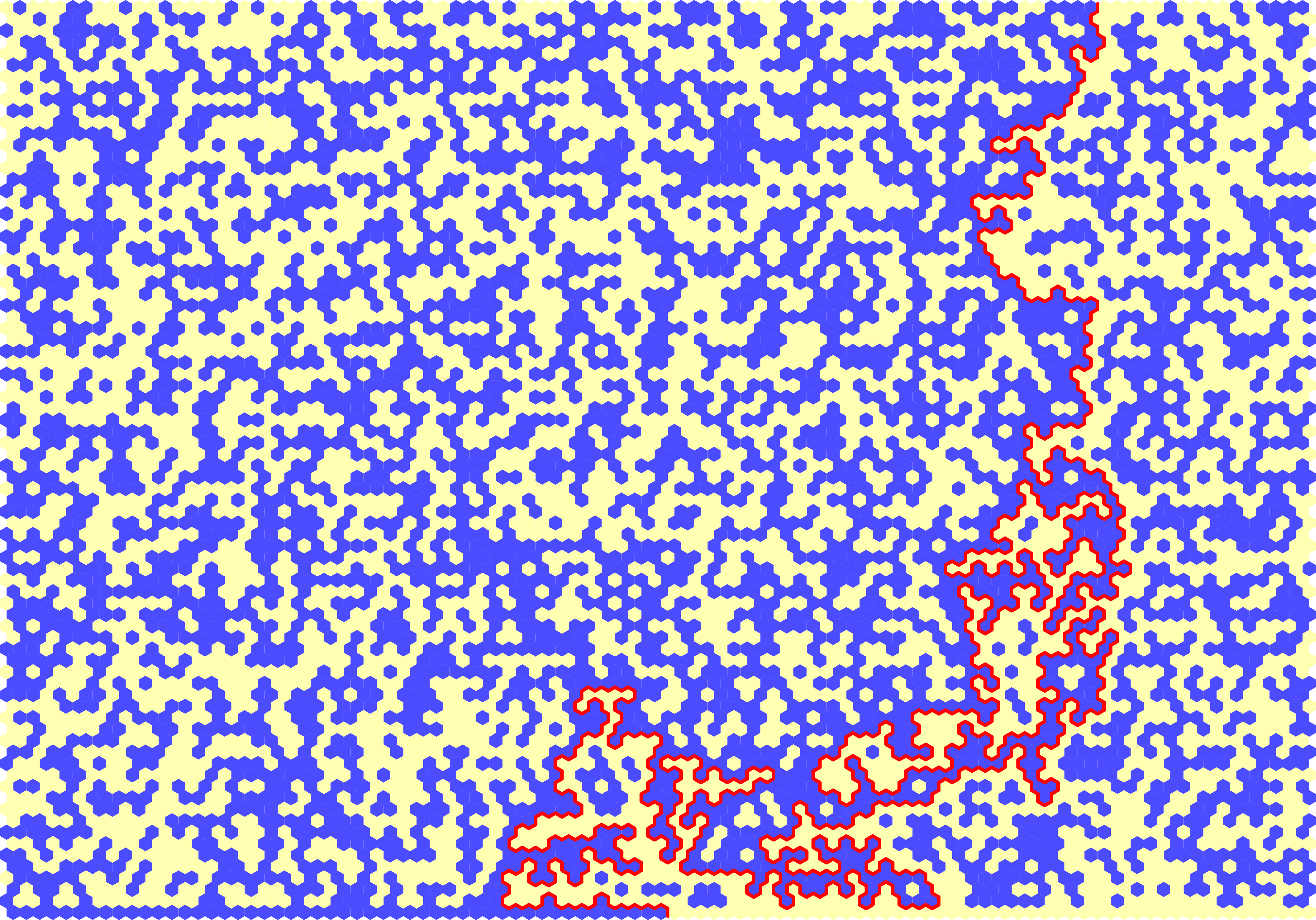}
\caption{Percolation exploration path}
\label{fig:explore}
\vspace*{-12pt}
\end{figure}

\subsubsection{Outline of remaining sections}

The organization of the remainder of this article is as follows:
\begin{itemize}
\item In \S\ref{ch:bm} we give a proof of the conformal invariance of Brownian motion, and discuss the connection to harmonic functions first developed by Kakutani \cite{\kakutani}. The material presented in this section is elementary in nature and can be skipped without loss by readers already familiar with the subject.

\item In \S\ref{ch:perc} we present Smirnov's theorem \cite{\smirnovperc,\smirnovpercarxiv} on the conformal invariance of crossing probabilities in critical percolation on the triangular lattice $T$.

\item \S\ref{ch:sle} is an introduction to the Loewner differential equation and $\sle$. \S\ref{ssec:sle.six} states a characterization of $\sle_6$ (following Camia and Newman \cite{\cmnw}) which will be used in identifying the scaling limit of the exploration path.

\item \S\ref{ch:scaling} presents a result of Aizenman and Burchard \cite{\abholder} which uses {\it a priori} estimates (in our case, percolation crossing exponents) to deduce the existence of \emph{subsequential} weak limits for the percolation exploration path.

\item In \S\ref{ch:explore} we show that all subsequential limits of the exploration path are chordal $\sle_6$. Our exposition follows the work of Binder, Chayes and Lei \cite{\bclperci,\bclpercii} and of Camia and Newman \cite{\cmnw}.

\item \S\ref{ch:end} concludes with some additional references and open problems concerning other planar models.
\end{itemize}
For more accounts on conformal invariance in percolation and convergence to $\sle_6$, see the lecture notes of Werner \cite{\wernerparkcity} and of Beffara and Duminil-Copin \cite{\beffaradcopin}. For a broad overview of conformally invariant scaling limits of planar models see the survey of Schramm \cite{\schrammmadrid}.

\subsection*{Acknowledgements}

This survey was originally prepared as an undergraduate paper in mathematics at Harvard University under the guidance of Yum-Tong Siu and Wilfried Schmid, and I thank them both for teaching me about complex analysis, and for their generosity with their time and advice.

I am very grateful to Scott Sheffield for teaching me about percolation and $\sle$, for answering countless questions and suggesting useful references. I would like to thank Scott Sheffield and Geoffrey Grimmett for carefully reading drafts of this survey and making valuable suggestions for improvement which I hope are reflected here.

I thank Amir Dembo, Jian Ding, Asaf Nachmias, David Wilson, and an anonymous referee for many helpful comments and corrections on recent drafts of this survey. I thank Michael Aizenman, Almut Burchard, Federico Camia, Curtis McMullen, Charles Newman, and Horng-Tzer Yau for helpful communications during the course of writing.

\vspace*{-2pt}
\section{Brownian motion}
\label{ch:bm}
\vspace*{-2pt}

Some notations: let $\ball[z]{r}$ denote the open ball of radius $r$ centered at $z\in\R^d$, and $\circl[z]{r}\equiv\pd\ball[z]{r}$. Let $\ball{r}\equiv\ball[0]{r}$, $\circl{r}\equiv\pd\ball{r}$ denote the versions of these which are centered at the origin.

If $(\Om,\cF,\P)$ is a probability triple and $(\cF_t)_{t\ge0}$ is an increasing filtration in $\cF$, we call $(\Om,\cF,(\cF_t)_{t\ge0},\P)$ a filtered probability space. For $X\in\cF_0$, an {\bf $(\cF_t)$-Brownian motion} started at $X$ is a stochastic process $(\bm_t)_{t\ge0}$ defined on $(\Om,\cF,\P)$ which is $(\cF_t)$-adapted and satisfies
\begin{enumerate}
\item $\bm_0=X$ a.s.;
\item $\bm$ is a.s.\ continuous;
\item for $s<t$, $\bm_t-\bm_s\sim\dN(0,t-s)$ and is independent of $\cF_t$.
\end{enumerate}
A {\bf Brownian motion in $\R^d$} is simply a $d$-dimensional vector of independent Brownian motions $(\bm_t^1,\ldots,\bm_t^d)$. For $d=2$ we always identify $\R^2\cong\C$ and write $\bm_t\equiv\bm_t^1+i\bm_t^2$ ($t \ge 0$), referred to as planar or complex Brownian motion. An approximation of a sample path is shown in Fig.~\ref{fig:bm} (the path becomes lighter over time). For $z\in\C$ let $\P_z$ denote the probability measure for Brownian motion started at $z$, and write $\E_z$ for expectation with respect to $\P_z$. For more on Brownian motion see e.g.\ \cite{\ks,\mpbm}.

\begin{figure}[t!]
\centering
\includegraphics[width=\textwidth]{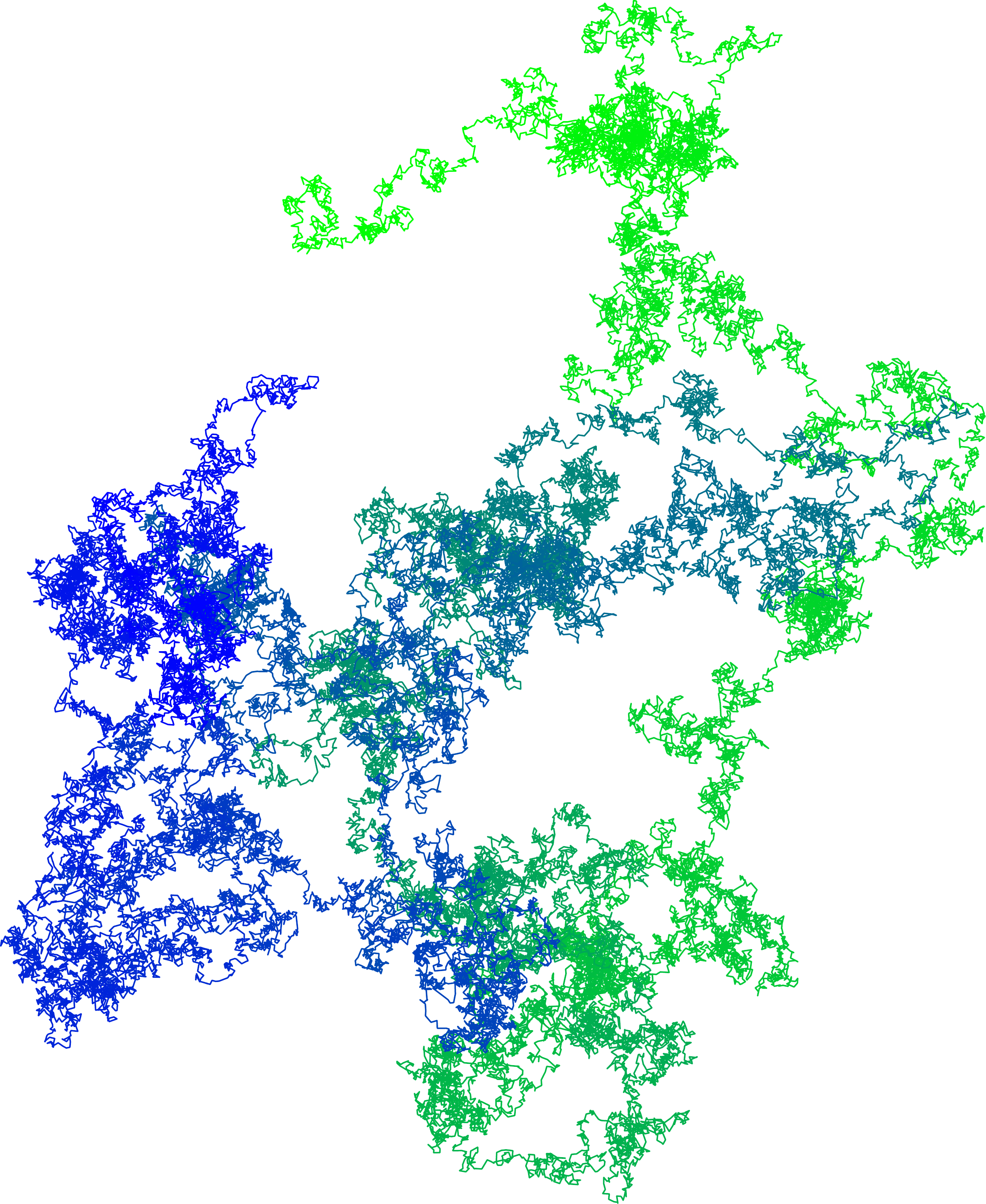}
\caption{Approximate sample path of Brownian motion}
\label{fig:bm}
\end{figure}

\vspace*{-2pt}
\subsection{Conformal invariance of planar Brownian motion}
\vspace*{-2pt}

The result below tells us that the holomorphic image of a Brownian motion is again a Brownian motion up to a (random) time change. For $D$ a complex domain\footnote{Recall that by a complex domain we always mean a nonempty proper open subset of $\C$ which is connected and simply connected.} and $\bm$ a Brownian motion started at $z_0\in D$, let
$$\tau_D\equiv
\inf\set{t\ge0:\bm_t\notin D}$$
denote the {\bf exit time} of $D$.

\begin{thm} \label{thm:bm.ci}
Let $\vph$ be a holomorphic map defined on the complex domain $D$, and for $\bm$ a Brownian motion started at $z_0\in D$ let $Y_t =\vph(\bm_t)$ ($t<\tau_D$). If
$$Y_t\equiv \wt\bm_{\si(t)},\quad \si(t) \equiv \int_0^t |f'(\bm_s)|^2 \d{s}$$
then $\wt\bm_t$ is a Brownian motion started at $\vph(z_0)$ run up to $\tau_{\vph(D)}$.
\end{thm}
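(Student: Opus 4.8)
The plan is to prove this via stochastic calculus, following the standard modern approach that the excerpt alludes to when mentioning It\^o's theory. First I would set up the necessary machinery: let $\bm = \bm^1 + i\bm^2$ be planar Brownian motion started at $z_0$, adapted to a filtration $(\cF_t)$, and write $\vph = u + iv$ with $u,v$ real harmonic functions satisfying the Cauchy-Riemann equations $u_x = v_y$, $u_y = -v_x$ on $D$. Applying It\^o's formula to the two real components $u(\bm_t)$ and $v(\bm_t)$ for $t < \tau_D$, the second-order (Laplacian) terms vanish because $u$ and $v$ are harmonic, leaving $\d u(\bm_t) = u_x\,\d\bm^1_t + u_y\,\d\bm^2_t$ and similarly for $v$. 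Thus $Y_t = \vph(\bm_t)$ is a continuous local martingale (a time-changed complex Brownian motion candidate), with no drift.

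Next I would compute the quadratic variations and cross-variation of the components of $Y$. Using $\d\langle \bm^i, \bm^j\rangle_t = \de_{ij}\,\d t$, one gets $\d\langle u(\bm), u(\bm)\rangle_t = (u_x^2 + u_y^2)\,\d t = |\vph'(\bm_t)|^2\,\d t$, and by Cauchy-Riemann the same expression for $\d\langle v(\bm), v(\bm)\rangle_t$, while $\d\langle u(\bm), v(\bm)\rangle_t = (u_x v_x + u_y v_y)\,\d t = 0$ (again by Cauchy-Riemann). So the real and imaginary parts of $Y$ are orthogonal continuous local martingales, each with quadratic variation $\si(t) = \int_0^t |\vph'(\bm_s)|^2\,\d s$. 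This is exactly the hypothesis under which the L\'evy-type characterization / Dambis--Dubins--Schwarz time-change theorem applies: a pair of orthogonal continuous local martingales, each with the same quadratic variation $\si(t)$, is a time-changed two-dimensional Brownian motion. Concretely, defining the right-continuous inverse $T_s = \inf\{t : \si(t) > s\}$ and $\wt\bm_s = Y_{T_s}$, the process $\wt\bm$ is a standard planar Brownian motion started at $Y_0 = \vph(z_0)$, and $Y_t = \wt\bm_{\si(t)}$ for $t < \tau_D$; since $\bm_t$ exits $D$ precisely when $Y_t = \vph(\bm_t)$ exits $\vph(D)$, and $\si$ is the clock, $\wt\bm$ is run up to time $\si(\tau_D) = \tau_{\vph(D)}$.

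The step I expect to be the main obstacle is the careful handling of the time change $\si$: I need $\si$ to be a.s.\ continuous, strictly increasing, and finite on $[0,\tau_D)$ so that the inverse $T_s$ is well-defined and $\wt\bm$ is genuinely a Brownian motion rather than some degenerate object. Continuity and monotonicity are immediate once one knows $\vph' \ne 0$ a.e.\ along the path --- but $\vph$ is only assumed holomorphic (possibly non-injective on $D$ in the footnote's broader sense, or with isolated critical points), so $\vph'$ may vanish at isolated points; since planar Brownian motion a.s.\ spends zero Lebesgue time at any fixed point (indeed at any polar set), $\si$ is still strictly increasing a.s. The subtler point is that $\si(\tau_D)$ could in principle be finite even when the Brownian clock should run forever, or the inverse could fail to be defined up to the exit time of $\vph(D)$; one resolves this by invoking that $\wt\bm$, being a planar Brownian motion, exits any bounded piece of $\vph(D)$ in finite clock-time, and matching the exit times on the two sides via the identity $\{\bm_t \in D\} = \{Y_t \in \vph(D)\}$. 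I would also remark that the local-martingale-to-martingale and time-change arguments should be carried out using a localizing sequence of stopping times $\tau_D \wedge \inf\{t : \bm_t \notin K_n\}$ for an exhaustion of $D$ by compacts $K_n$, on which everything is bounded, and then pass to the limit.
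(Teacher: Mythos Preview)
Your proposal is correct and follows essentially the same route as the paper: apply It\^o's formula to $u(\bm_t)$ and $v(\bm_t)$, use harmonicity to kill the drift, use the Cauchy--Riemann equations to show the quadratic variations are both $|\vph'(\bm_t)|^2\,dt$ with zero cross-variation, and then invoke Dubins--Schwarz together with L\'evy's characterization. Your discussion of the strict increase of $\si$, the localization by compact exhaustion, and the identification $\si(\tau_D)=\tau_{\vph(D)}$ is in fact more thorough than the paper's own proof, which leaves the verification that $\si(t)\to\infty$ and that $[\wt\bm^1,\wt\bm^2]=0$ to the reader.
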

\eject

Here is L\'evy's heuristic argument \cite{\levy}: translations and rotations of $\C$ clearly map Brownian motions to Brownian motions, and we have the Brownian scaling $\wt\bm_t=cW_{t/c^2}$ ($c>0$) for $\wt\bm$ another Brownian motion of the same speed as $W$. The stochastic process is determined by its behavior ``locally'' at each point in time, and near time $t$ we have
$$\vph(\bm_{t+h}) - \vph(\bm_t)
\approx \lm_t (\bm_{t+h} - \bm_t), \quad
\lm_t\equiv\vph'(\bm_t).$$
By rotation-invariance and the Brownian scaling,
$\lm_t (\bm_{t+h} - \bm_t)=\wt\bm_{\abs{\lm_t}^2 h}$, so $\vph(\bm)$ at time $t$ behaves like a standard Brownian motion ``sped up'' by a factor of $|\lm_t|^2$.

Below we present the modern proof of this result, which uses stochastic calculus.\footnote{Stochastic calculus is not heavily used in this article, so the reader not familiar with the subject will not lose much by simply skipping over the places where it appears.} In preparation we first recall It\=o's formula, L\'evy's characterization of Brownian motion, and the Dubins-Schwarz theorem (proofs may be found in e.g.\ \cite{\ks}). For two continuous semimartingales $X,X'$ we use the notation $[X,X']$ for their covariation process, and we write $[X]\equiv[X,X]$ for the quadratic variation process of $X$.

\begin{thm}[It\=o's formula]
Let $X$ be a continuous semimartingale: $X_t=X_0 + M_t + A_t$ where $M$ is a local martingale and $A$ is a finite-variation process. Then $f(X_t)$ is again a continuous semimartingale with
$$f(X_t) - f(X_0)
= \sum_{j=1}^d \int_0^t \pd_j f(X_s) \d X^j_s
+ \f{1}{2} \sum_{j,k=1}^d \int_0^t \pd_j \pd_k f(X_s) \d[X^j,X^k]_s.$$
In differential notation
$$df(X_t)
= \sum_{j=1}^d \pd_j f(X_s) \d X^j_s
+ \f{1}{2} \sum_{j,k=1}^d \pd_j \pd_k f(X_s) \d[X^j,X^k]_s.$$
\end{thm}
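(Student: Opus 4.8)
The plan is to prove the formula by expanding $f$ to second order along a sequence of partitions, identifying the first-order sums with the It\=o integral and the second-order sums with the covariation integral. Two preliminary reductions. First, by polarization it suffices to handle the quadratic variation $[X^j]$, since $[X^j,X^k]=\f14\bigl([X^j+X^k]-[X^j-X^k]\bigr)$. Second, by localization --- stopping $X$ at $T_N=\inf\set{t\ge0:\abs{X_t}\ge N \text{ or } [X]_t\ge N \text{ or } \text{Var}_{[0,t]}(A)\ge N}$ and letting $N\uparrow\infty$, using that both sides of the formula are continuous processes --- we may assume $X$, $[X]$ and the total variation of $A$ are uniformly bounded and, since only the values of $f,\nabla f,D^2f$ on a fixed compact matter, that these are bounded and uniformly continuous. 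I will also use the standard fact that for a continuous local martingale $M$ the sums $\sum_i(M_{t_{i+1}^n}-M_{t_i^n})^2$ converge in probability to $[M]_t$ along any partitions $0=t_0^n<\cdots<t_{k_n}^n=t$ of $[0,t]$ with mesh $\to0$. Fixing such a partition sequence and telescoping, $f(X_t)-f(X_0)=\sum_i\bigl(f(X_{t_{i+1}^n})-f(X_{t_i^n})\bigr)$, and Taylor's theorem gives, with $\De_i X^j \equiv X^j_{t_{i+1}^n}-X^j_{t_i^n}$ and $\xi_i^n$ on the segment from $X_{t_i^n}$ to $X_{t_{i+1}^n}$,
\[
f(X_{t_{i+1}^n})-f(X_{t_i^n})
= \sum_j \pd_j f(X_{t_i^n})\,\De_i X^j
+ \f12\sum_{j,k}\pd_j\pd_k f(\xi_i^n)\,\De_i X^j\,\De_i X^k.
\]

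For the first-order terms, $\sum_i\pd_j f(X_{t_i^n})\,\De_i X^j$ is precisely the stochastic integral against $X^j$ of the simple adapted process that freezes $s\mapsto\pd_j f(X_s)$ at the left endpoint of each partition interval; continuity of $X$ makes these simple processes converge uniformly to $\pd_j f(X_\cdot)$, so by the It\=o isometry (for the local-martingale part of $X^j$) together with pathwise dominated convergence (for the finite-variation part) the sum converges in probability to $\int_0^t\pd_j f(X_s)\,\d X^j_s$. In the second-order terms I would first replace $\pd_j\pd_k f(\xi_i^n)$ by $\pd_j\pd_k f(X_{t_i^n})$: the resulting error is at most $\bigl(\max_i \omega(\mathrm{osc}_{[t_i^n,t_{i+1}^n]}X)\bigr)\cdot\sum_i\abs{\De_i X^j}\,\abs{\De_i X^k}$, with $\omega$ the modulus of continuity of $D^2f$; uniform continuity of the path $s\mapsto X_s$ on $[0,t]$ kills the first factor a.s., while the second is bounded in probability by Cauchy--Schwarz and the convergence of the quadratic-variation sums.

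It then remains to show $\sum_i\pd_j\pd_k f(X_{t_i^n})\,\De_i X^j\,\De_i X^k \to \int_0^t\pd_j\pd_k f(X_s)\,\d[X^j,X^k]_s$ in probability, and this is the heart of the argument. Decomposing $X=X_0+M+A$, the product $\De_i X^j\De_i X^k$ splits into four bilinear pieces; the $\De_i A^j\De_i A^k$ and $\De_i M^j\De_i A^k$ contributions are dominated by (bounded weight) $\times$ (maximal oscillation of $A$ or of $M$ over a partition interval) $\times$ (total variation of $A$), hence vanish in probability. For the surviving piece $\sum_i\pd_j\pd_k f(X_{t_i^n})\,\De_i M^j\De_i M^k$ one compares with the unweighted sums $\sum_i\De_i M^j\De_i M^k\to[M^j,M^k]_t$: approximating the weight $\pd_j\pd_k f(X_\cdot)$ by step functions adapted to the partition and controlling the discrepancy with the same modulus-of-continuity/Cauchy--Schwarz estimate gives convergence to $\int_0^t\pd_j\pd_k f(X_s)\,\d[M^j,M^k]_s$, which equals $\int_0^t\pd_j\pd_k f(X_s)\,\d[X^j,X^k]_s$ since $[X^j,X^k]=[M^j,M^k]$. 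Adding the first- and second-order limits yields the integral identity at the fixed time $t$ along a subsequence on which all convergences hold a.s.; since both sides are continuous in $t$, one a.s.\ event serves for all $t$ simultaneously, and the differential statement is just notation for the integral one. I expect the bookkeeping of these convergences, and in particular the weighted-quadratic-variation step, to be where the real care is required.

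An alternative I would consider is to prove the formula first for $f$ a polynomial --- by induction on the degree using the product rule $\d(XY)=X\,\d Y+Y\,\d X+\d[X,Y]$, which is itself the case $f(x,y)=xy$ and follows directly from the quadratic-variation convergence --- and then extend to general $f\in C^2$ by approximating $f$, $\nabla f$, $D^2f$ uniformly on compacts by polynomials and passing to the limit in the stochastic integrals via dominated convergence. This trades the delicate weighted-sum estimate for a clean Stone--Weierstrass-type approximation step, at the cost of first establishing integration by parts. For a complete treatment see \cite{\ks}.
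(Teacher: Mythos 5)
Your outline is essentially correct, but there is nothing in the paper to compare it against: It\^o's formula appears there only as a recalled background result, stated in preparation for the proof of conformal invariance of planar Brownian motion, with the proof explicitly deferred to the literature (``proofs may be found in e.g.\ \cite{\ks}''). What you have written is, in substance, the standard textbook proof from that reference: localization to make $X$, $[X]$, the variation of $A$, and hence $f,\nabla f,D^2f$ on the relevant compact set bounded; polarization to reduce to diagonal quadratic variations; telescoping plus second-order Taylor expansion along partitions; identification of the left-endpoint sums with the stochastic integral (It\^o isometry for the bounded martingale part, pathwise dominated convergence for the finite-variation part); the modulus-of-continuity/Cauchy--Schwarz replacement of $\xi_i^n$ by the left endpoint; the vanishing of the $\Delta A\,\Delta A$ and $\Delta M\,\Delta A$ cross terms; and finally the weighted quadratic-variation convergence, which you correctly flag as the step requiring real care and which is exactly where the cited treatment spends its effort. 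The only remarks worth making are that after your stopping time $T_N$ the martingale part is $L^2$-bounded (since $[M]=[X]\le N$), which is what legitimizes the isometry argument, and that the passage from convergence at a fixed $t$ along a subsequence to a single a.s.\ event for all $t$ uses continuity of both sides exactly as you say. Your alternative route via the product rule, polynomials, and Stone--Weierstrass approximation is also a legitimate standard proof; since the paper offers no argument of its own, either version would serve.
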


\begin{thm}[L\'evy's characterization of Brownian motion]
Let $(X_t)_{t\ge0}$ be a continuous adapted process in $\R^d$ defined on the filtered probability space $(\Om,\cF,(\cF_t)_{t\ge0},\P)$ such that
\begin{enumerate}[(i)]
\item $X^j_t-X^j_0$ is a local $(\cF_t)$-martingale for $1\le j\le d$;
\item the pairwise quadratic variations are $[X^j,X^k]_t = \de_{kj} t$ for $1\le k,j\le d$.
\end{enumerate}
Then $X$ is an $(\cF_t)$-Brownian motion in $\R^d$.
\end{thm}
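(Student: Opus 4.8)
The plan is to run the classical argument via It\=o's formula applied to a complex exponential, which converts the two hypotheses into an exact identity for conditional characteristic functions. First I would fix $\theta\in\R^d$ and introduce the complex-valued process $Z_t \equiv \exp\!\big(i\langle\theta,X_t\rangle + \f{\abs{\theta}^2}{2}\,t\big)$. Applying It\=o's formula to $x\mapsto e^{i\langle\theta,x\rangle}$ (componentwise on its real and imaginary parts) and using hypothesis (ii), namely $\d[X^j,X^k]_s = \de_{jk}\,\d s$, the second-order term produces a drift $-\f{\abs{\theta}^2}{2}\,e^{i\langle\theta,X_s\rangle}\,\d s$, which is exactly cancelled by the derivative of the finite-variation exponential prefactor; one is left with $\d Z_t = Z_t\sum_{j=1}^d i\theta_j\,\d X^j_t$. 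Hypothesis (i) then makes $Z$ a local $(\cF_t)$-martingale, and since $\abs{Z_t} = \exp(\f{\abs{\theta}^2}{2}\,t)$ is deterministic and bounded on every finite time interval, $Z$ is in fact a genuine martingale.

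Next I would extract the increment law. For $s<t$ the martingale identity $\E[Z_t\mid\cF_s] = Z_s$, divided by the $\cF_s$-measurable factor $\exp(i\langle\theta,X_s\rangle + \f{\abs{\theta}^2}{2}\,s)$, becomes
$$\E\big[\exp(i\langle\theta,X_t-X_s\rangle)\bigm|\cF_s\big] = \exp\!\Big(-\f{\abs{\theta}^2}{2}(t-s)\Big),$$
whose right-hand side is the characteristic function of $\dN(0,(t-s)I)$ and, crucially, is non-random. Because the conditional characteristic function of $X_t-X_s$ given $\cF_s$ does not depend on $\cF_s$, for each $A\in\cF_s$ with $\P(A)>0$ the conditional law of $X_t-X_s$ given $A$ has this same characteristic function; by the uniqueness theorem for characteristic functions that law is $\dN(0,(t-s)I)$, independently of the choice of $A$. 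This is precisely the statement that $X_t-X_s$ is independent of $\cF_s$ with distribution $\dN(0,(t-s)I)$; the factorization of $\exp(-\f{\abs{\theta}^2}{2}(t-s))$ over coordinates also shows the $X^j_t-X^j_s$ are mutually independent one-dimensional $\dN(0,t-s)$ variables, so $X$ is a vector of independent Brownian motions.

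Finally, combining this with the given hypotheses that $X$ is continuous and $(\cF_t)$-adapted — so that $X_0$ is an $\cF_0$-measurable starting value — one sees that $X$ satisfies all the defining properties of an $(\cF_t)$-Brownian motion in $\R^d$ started at $X_0$, which completes the proof.

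The step I expect to be the main obstacle is the first one: one must take a little care to apply It\=o's formula to a complex (hence non-real) test function by splitting into real and imaginary parts, and then to justify promoting the resulting stochastic integral from a local martingale to a true martingale. Here the promotion is painless precisely because $\abs{Z_t}$ is a deterministic, locally bounded function of $t$, but this is the delicate point of the argument; the remaining steps are the routine dictionary between characteristic functions, independence, and Gaussianity.
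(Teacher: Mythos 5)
Your argument is correct: it is the classical exponential-martingale proof, defining $Z_t=\exp(i\langle\theta,X_t\rangle+\tfrac{|\theta|^2}{2}t)$, showing via It\=o's formula and hypothesis (ii) that $Z$ is a local martingale (hence a true martingale, being deterministically bounded on compact time intervals), and reading off the conditional characteristic function of the increments. Note that the paper itself does not prove this statement --- it is quoted as a background tool with a reference to Karatzas--Shreve, where essentially this same argument appears --- so there is no in-paper proof to compare against; your write-up fills that gap correctly, and the delicate points you flag (complex test function, localization) are exactly the right ones.
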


\begin{thm}[Dubins-Schwarz theorem] Let $M$ be a continuous local martingale defined on the filtered probability space $(\Om,\cF,(\cF_t)_{t\ge0},\P)$ with $[M]_t\to\infty$ a.s. If
$$\tau(s) \equiv \inf\set{t\ge0:[M]_t>s},\quad
\cG_s\equiv \cF_{\tau(s)},$$
then $\bm_s\equiv M_{\tau(s)}$ is a $(\cG_t)$-Brownian motion, and $M_t=\bm_{[M]_t}$.
\end{thm}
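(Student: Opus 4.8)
The plan is to recognize $\bm$ as a continuous $(\cG_s)$-local martingale with quadratic variation $[\bm]_s = s$ and then invoke L\'evy's characterization; assume without loss that $M_0 = 0$ (otherwise run the argument for $M - M_0$). Write $A \equiv [M]$, which is continuous, nondecreasing, and tends to $\infty$ by hypothesis, so $\tau(s) = \inf\set{t : A_t > s}$ is a.s.\ finite for every $s \ge 0$, is nondecreasing and right-continuous in $s$, each $\tau(s)$ is an $(\cF_t)$-stopping time, and $A_{\tau(s)} = s$ by continuity of $A$. Hence $\bm_s = M_{\tau(s)}$ is defined for all $s \ge 0$ and $(\cG_s) = (\cF_{\tau(s)})$ is a filtration.

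The one genuinely nontrivial input is the \textbf{constancy lemma}: a continuous local martingale $M$ is a.s.\ constant on every interval on which $[M]$ is constant. To prove it, fix $a < b$, set $S \equiv \inf\set{t \ge a : A_t > A_a}$, and note that $N_t \equiv M_{t\wedge S} - M_{t\wedge a}$ is a continuous local martingale with $[N]_\infty = A_S - A_a = 0$ a.s., hence an $L^2$-bounded martingale with $\E N_\infty^2 = 0$; thus $N \equiv 0$, and on $\set{A_a = A_b}$ (where $S \ge b$) this gives $M_b = M_a$. Applying this for all rational $a < b$ and invoking continuity of $M$ shows that, a.s., $M$ is constant on every interval on which $A$ is constant. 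Two consequences follow. First, $s \mapsto \bm_s$ is continuous: right-continuity is immediate from right-continuity of $\tau$ and continuity of $M$, while at any left jump $\tau(s-) < \tau(s)$ the definition of $\tau$ forces $A$ to be constant on $[\tau(s-),\tau(s)]$, so $M$ — and hence $\bm$ — has no jump there. Second, since $\tau(A_t) \ge t$ with $A$ constant on $[t,\tau(A_t)]$, we get $\bm_{[M]_t} = M_{\tau(A_t)} = M_t$, which is the asserted identity.

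It remains to identify the law of $\bm$. Localize with $\sigma_n \equiv \tau(n)$: then $[M]_{\sigma_n} = A_{\sigma_n} = n < \infty$, so $M^{\sigma_n}$ is a uniformly integrable (indeed $L^2$-bounded) martingale, and $\bm_{s\wedge n} = M_{\tau(s)\wedge\sigma_n} = M^{\sigma_n}_{\tau(s)}$. For $s \le t$, optional sampling of the martingale $M^{\sigma_n}$ at the stopping times $\tau(s) \le \tau(t)$ yields $\E[\bm_{t\wedge n}\mid\cG_s] = \bm_{s\wedge n}$, so each $(\bm_{s\wedge n})_{s\ge0}$ is a $(\cG_s)$-martingale and $\bm$ is a $(\cG_s)$-local martingale. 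By the behavior of quadratic variation under optional stopping and time change, $[\bm]_{s\wedge n} = [M^{\sigma_n}]_{\tau(s)} = A_{\sigma_n\wedge\tau(s)} = A_{\tau(n\wedge s)} = n\wedge s$, so $[\bm]_s = s$. Since $\bm$ is continuous with $\bm_0 = M_0 = 0$, L\'evy's characterization (case $d=1$) identifies $\bm$ as a $(\cG_s)$-Brownian motion, and $M_t = \bm_{[M]_t}$ has already been established.

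I expect the main obstacle to be the constancy lemma together with the measurability bookkeeping it entails — verifying that $(\cG_s)$ is a bona fide filtration and that optional sampling may be applied at the $(\cF_t)$-stopping times $\tau(s)$, plus the standard facts that a continuous local martingale with finite expected quadratic variation is an $L^2$ martingale and that quadratic variation commutes with time change. Once $\bm$ is known to be a continuous local martingale with $[\bm]_s = s$, L\'evy's theorem finishes the proof immediately.
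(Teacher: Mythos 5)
Your proof is correct, and it is essentially the standard argument (the constancy lemma for intervals where $[M]$ is flat, continuity of $s\mapsto M_{\tau(s)}$, optional sampling of the stopped $L^2$ martingales $M^{\tau(n)}$, then L\'evy's characterization) found in Karatzas--Shreve, which is exactly where the paper defers the proof: the survey states the Dubins--Schwarz theorem only as a background tool and gives no proof of its own. The only points you gloss over are routine: that $\tau(s)=\inf\{t:[M]_t>s\}$ is a genuine $(\cF_t)$-stopping time (one uses continuity of $[M]$ and the usual conditions on the filtration), and the standard fact that the quadratic variation of the time-changed process is the time-changed quadratic variation, whose proof is the same optional-sampling argument applied to $(M^{\tau(n)})^2-[M^{\tau(n)}]$.
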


\begin{proof}[Proof of Thm.~\ref{thm:bm.ci}]
Let $u=\real\vph$ and $v=\imag\vph$; we regard $u,v$ as functions on $\R^2$ and write $\vph(x+iy)=u(x,y)+i v(x,y)$. Writing $\bm_t=\bm^1_t+i\bm^2_t$ and applying It\=o's formula (using the indepence of the two components) gives
\begin{align*}
d u(\bm_t) &= \pd_x u(\bm_t) \d\bm^1_t+ \pd_y u (\bm_t) \d\bm^2_t
    + \f{1}{2} \De u(\bm_t) \d{t}, \\
d v(\bm_t) &= \pd_x v(\bm_t) \d\bm^1_t+ \pd_y v (\bm_t) \d\bm^2_t
    + \f{1}{2}\De u(\bm_t) \d{t}.
\end{align*}
By the Cauchy-Riemann equations,
$$d[u(\bm)]_t = \lp [\pd_x u(\bm_t)]^2 + [\pd_y u(\bm_t)]^2 \rp \d t
= |f'(\bm_t)|^2 \d t
= d[v(\bm)]_t$$
and
$$d[u(\bm),v(\bm)]_t
= \lb \pd_x u(\bm_t) \pd_x v(\bm_t) + \pd_y u(\bm_t) \pd_y v(\bm_t) \rb dt
= 0.$$
Further $u,v$ are harmonic (again by the Cauchy-Riemann equations) so $u(\bm_t)$ and $v(\bm_t)$ are local martingales, hence time-changed Brownian motions by the Dubins-Schwarz theorem:
$$u(\bm_t) = \wt\bm^1_{\si(t)},\quad
u(\bm_t) = \wt\bm^2_{\si(t)}$$
for Brownian motions $\wt\bm^1,\wt\bm^2$. (One needs to verify that $\si(t)\to\infty$; this follows from holomorphicity of $\vph$ and the neighborhood recurrence of planar Brownian motion.) Finally, we leave the reader to check that $[\wt\bm^1,\wt\bm^2]=0$; the result then follows from L\'evy's characterization of Brownian motion.
\end{proof}

\subsection{Harmonic functions and the Dirichlet problem}
\label{sec:bm.harm}

Given a domain $D$ and a bounded continuous function $f : \pd D\to\R$, we say that $u\in C^2(D) \cap C(\clos D)$ solves the {\bf Dirichlet problem} on $D$ with boundary data $f$ if $u$ is harmonic ($\De u=0$) in $D$ and agrees with $f$ on $\pd D$. We now demonstrate the connection to Brownian motion as first noted by Kakutani \cite{\kakutani}. Recall that $\tau_D$ denotes the exit time of a Brownian motion from $D$, and let $\tau_D^+ \equiv \inf\set{t>0 : \bm_t\notin D}$. A domain $D$ is said to be {\bf regular} if for all $z\in\pd D$, $\tau_D^+=0$, $\P_z$-a.s.

\begin{ppn}
\label{ppn:bm.dirichlet}
Let $D$ be a bounded regular domain, and let $f : \pd D \ra \R$ continuous. Define $u : \clos{D} \ra \R$ by
$$u(z) = \left\{ \begin{array}{cl}
    f(z) & \text{ if } z \in \pd D. \\
    \E_z [f(\bm_{\tau_D})] & \text{ if } z \in D.
    \end{array} \right.$$
Then $u$ is a bounded solution to the Dirichlet problem on $D$ with boundary data $f$, and it is the unique such solution.

\begin{proof}[Proof (sketch)]
Since $f$ is bounded, $u$ is certainly bounded. Uniqueness is easy: let $v$ be another such solution. Then $v(\bm_t)$ is a bounded local martingale, hence a uniformly integrable martingale, so by the optional stopping theorem
$$v(z)
= v(\bm_0)
= \E_z[v(\bm_{\tau_D})]
= \E_z[f(\bm_{\tau_D})]
= u(z).$$
To show $u$ is harmonic we check the local mean-value property, that for each $z\in D$ there exists $r_0>0$ such that
$$u(z) = \frac{1}{2\pi} \int_0^{2\pi} u(z + re^{i\thet}) \ d\thet,$$
for $0<r\le r_0$ (see e.g.\ \cite{\liebloss}). Let $\bm_t$ be a Brownian motion started at $z$, and let $\si=\tau_{\ball[z]{r}}$. By the rotational symmetry of Brownian motion, $\bm_\si$ is uniformly distributed on $\pd \ball[z]{r}$, so the right-hand side of the above is
\begin{align*}
\E_z [u(\bm_\si)]
&= \E_z \left[ \E_{\bm_\si} [f(\bm_{\tau_D})] \right]
    \quad\text{by definition of $u$} \\
&= \E_z \left[ \E_z [f(\bm_{\tau_D}) | \cF_\si] \right]
    \quad\text{by strong Markov property} \\
&= \E_z [f(\bm_{\tau_D})]
    \quad\text{by iterated expectations} \\
&= u(z).
\end{align*}
This shows that $u$ is harmonic. Continuity of $u$ on $\clos D$ requires the regularity assumption and we omit the proof here.
\end{proof}
\end{ppn}

\begin{rmk}
For some simple domains $U$ a {\bf Poisson integral formula} gives an explicit mapping $P_U$ from a (bounded continuous) function $f$ defined on $\pd U$ to the solution of the Dirichlet problem on $U$ with boundary values $f$. The formulas for the disc and the upper half-plane are well known and will be used in deriving the Loewner differential equation, so we recall them here:
\begin{align}
\label{eq:pois.d}
&(P_\D f)(re^{i\thet})
= \frac{1}{2\pi} \int_0^{2\pi} P_r(\thet-\vph) f(e^{i\vph}) \ d\vph,
\quad
P_r(\thet)=\real\left(\frac{1+re^{i\thet}}{1-re^{i\thet}}\right),\\
\label{eq:pois.h}
&(P_\H f)(x+iy) = \frac{1}{\pi} \int_\R Q_y(t-x) f(t) \ dt, \quad
    Q_y(x)=\frac{y}{x^2+y^2}.
\end{align}
For proofs see e.g.\ \cite{\ssf,\sscplx}.
\end{rmk}

Propn.~\ref{ppn:bm.dirichlet} also gives us a weaker version of Thm.~\ref{thm:bm.ci}, namely, that the \emph{hitting} distribution of Brownian motion is conformally invariant. Indeed, let $D,D'$ be regular domains (say with smooth boundaries), and let $z$ be a point in $D$ and $A$ an arc on $\pd D$. Let $z'=\vph(z)$, $A'=\vph(A)$, and $\bm'_t=\vph(\bm_t)$. Then $u(z)\equiv\P_z(\bm_{\tau_D}\in A)$ solves the Dirichlet problem on $D$ with boundary conditions $f=\I_A$, while $u'(z)\equiv\P_{z'}(\bm_{\tau_{D'}}\in A')$ solves the Dirichlet problem on $D'$ with boundary conditions $f'=\I_{A'}$.\footnote{The indicator functions are discontinuous, but since we assumed $\pd D$ to be smooth we may easily approximate $\I_A$ by continuous functions.} But
$$u\circ\vph^{-1}$$
is also a solution, so by uniqueness we have $u=u'\circ\vph$. Since $A$ was arbitrary the conformal invariance of the hitting distribution follows.

In fact, here is a way to deduce the full conformal invariance of the Brownian path (modulo time reparametrization) from this seemingly weaker result. For each $\ep>0$, we approximate the Brownian path by the piecewise linear curve $\bm_t^{(\ep)}$ joining the points $z=z_1,z_2,\ldots$, where $z_{k+1}$ is the first point where the Brownian motion starting from $z_k$ hits $\ball[z_k]{\ep}$. To be precise, we will define a sequence of stopping times $0=\tau_0,\tau_1,\ldots$ by recursively setting $z_k=\bm_{\tau_k}$ and $\tau_{k+1}=\tau_{\ball[z_k]{\ep}}$. We then make $\bm_t^{(\ep)}$ into a continuous stochastic process by linear interpolation. It is clear that as $\ep\to0$, $\bm_t^{(\ep)}$ converges a.s.\ to $\bm_t$ in the topology of uniform convergence. Now, if $\vph$ is a conformal map, then by the above $\vph(\bm_{\tau_{k+1}}^{(\ep)})$ has the same distribution as the point where a Brownian motion started at $\vph(z_k)$ first exits the conformal ball $\vph(\ball[z_k]{\ep})$. It follows that $\vph(\bm_t^{(\ep)})$ converges a.s.\ as $\ep\to0$ to a process which is a time reparametrization of Brownian motion.

The proof of Smirnov's theorem for crossing probabilities, in the next section, is to some extent motivated by these simple observations. In particular, conformal invariance of the crossing probabilities will follow naturally from expressing the probabilities in terms of a harmonic function solving some form of Dirichlet problem. The idea of constructing polygonal approximations to a random path will also reappear, in \S\ref{ch:explore}, when we use a modification of this construction to show that the scaling limit of percolation agrees with chordal $\sle_6$.

\section{Percolation and Smirnov's theorem on crossing probabilities}
\label{ch:perc}

In this section we present Smirnov's celebrated theorem on the conformal invariance of crossing probabilities in critical percolation on the triangular lattice $T$ \cite{\smirnovperc, \smirnovpercarxiv}. Smirnov's key insight was the discovery of a ``preharmonic'' function whose evaluation at a certain point gives the crossing probability. As $\de\to0$, these functions converge to a true harmonic function solving a Dirichlet-type problem on $D$, and the theorem follows because the solution to the Dirichlet problem is a conformal invariant.

The first section below gives a formal statement of Smirnov's crossing probabilities theorem.

\subsection{Statement of Smirnov's theorem}
\label{sec:perc.planar}

Let $\Lm=(V,E)$ be a planar lattice. Write $u\sim v$ if $(u,v)\in E$. Site percolation on $\Lm$ may be visualized as {\bf face percolation} on the dual lattice $\Lm^*$. We will use {\bf blue} and {\bf yellow} in the place of open and closed respectively, particularly to avoid confusion with the topological meanings of those words.

\begin{figure}
\subfloat[$T$ with dual $H$]{\includegraphics[height=0.42\textwidth]{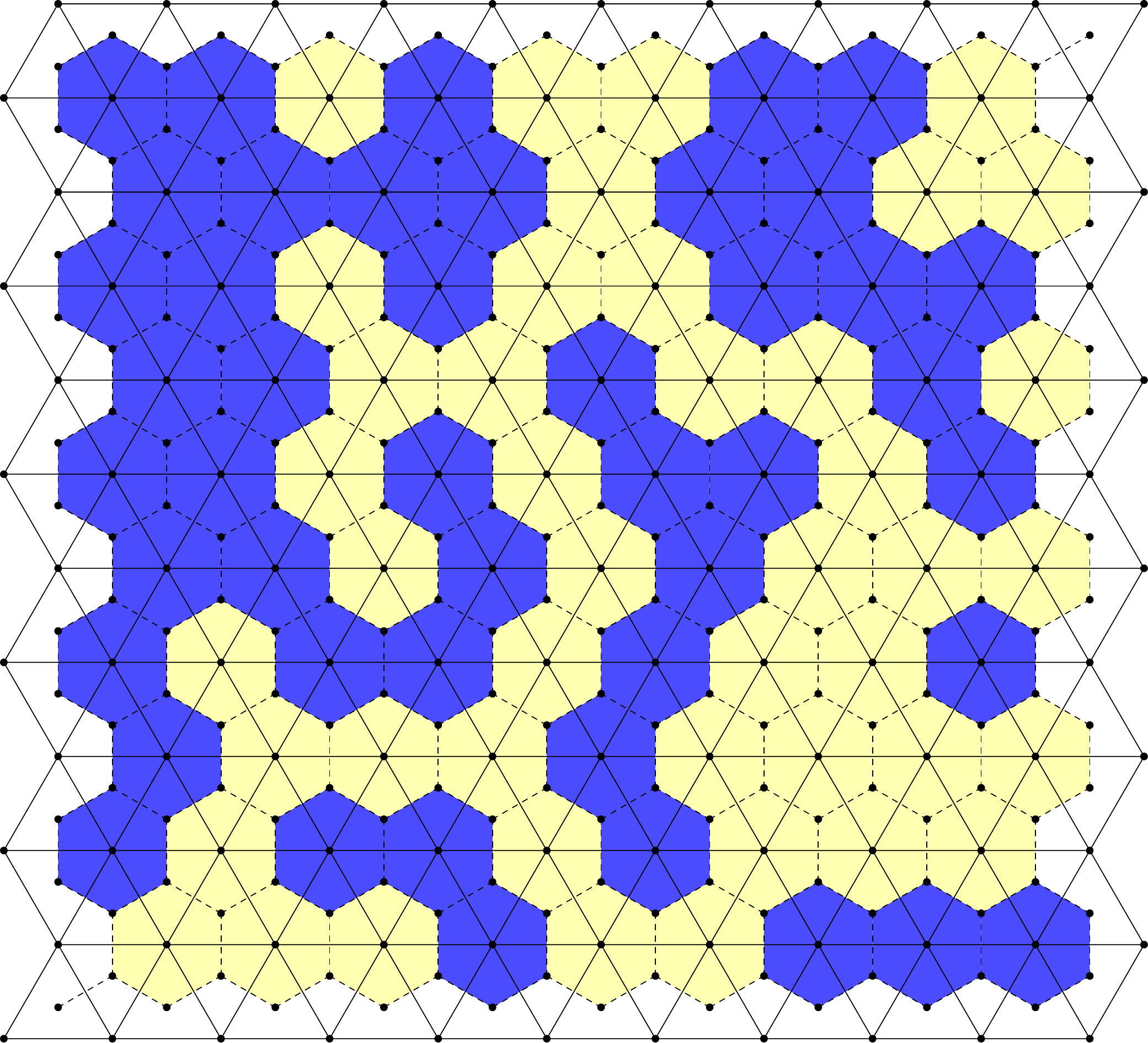}}
\qquad
\subfloat[Face percolation on $H$]{\includegraphics[height=0.42\textwidth]{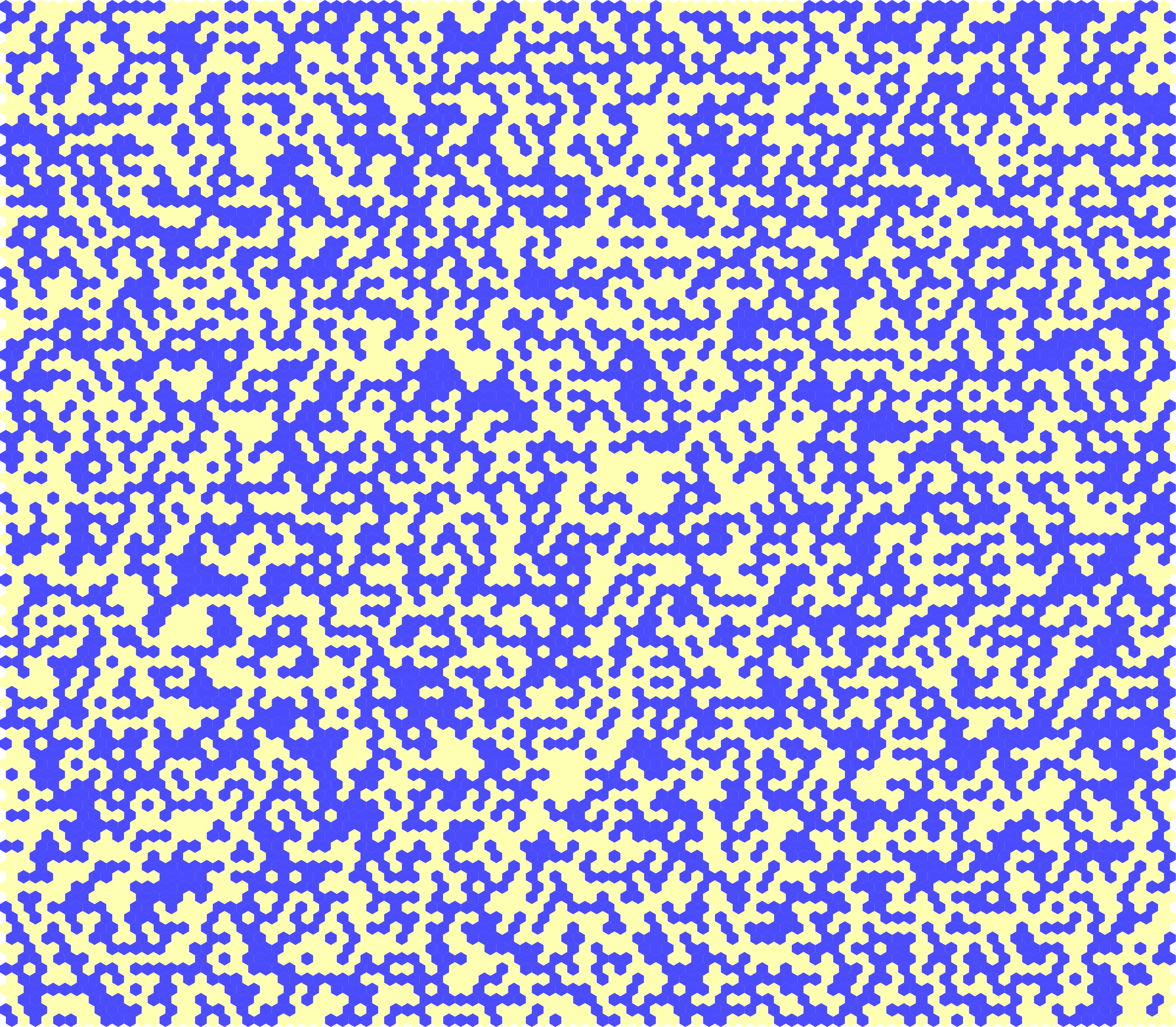}}
\caption{Site percolation on $T$ as face percolation on $H$}
\label{fig:tri.hex}
\end{figure}

Smirnov's theorem is for site percolation on $T$, shown with its dual hexagonal lattice $H$ in Fig.~\ref{fig:tri.hex}. Each vertex $v\in T$ corresponds to a hexagon $H_v$. A {\bf triangular vertex} means a vertex of $T$, and a {\bf triangular path} means the polygonal interpolation of a path in $T$, regarded as a polygonal curve in $\C$. Define similarly {\bf hexagonal vertex} and {\bf hexagonal path}.

Recall that a {\bf domain} is a nonempty proper open subset $D\subset\C$ which is simply connected. $D$ is called a {\bf Jordan domain} if its boundary $\pd D$ is a Jordan curve. In this case if $a,b\in \pd D$ then $\ol{ab}$ denotes the counterclockwise arc from $a$ to $b$ on $\pd D$.

\begin{dfn} \label{dfn:marked.domain}
A {\bf $k$-marked (Jordan) domain} is a Jordan domain $D$ together with $k$ distinct points $P_1,\ldots,P_k \in \pd D$ in counterclockwise order, denoted $D_k = (D;P_1,\ldots,P_k)$. Write $A_j\equiv A_j(D_k)$ for the arc $\ol{P_j P_{j+1}}$ (indices taken modulo $k$).

A {\bf conformal equivalence} of $k$-marked domains $D_k,D_k'$ is a conformal map $\vph:D\to D'$ with $\vph:P_j\mapsto P_j'$ for all $j$.
\end{dfn}

\begin{dfn} \label{dfn:marked.domain.disc}
A {\bf $k$-marked discrete domain with mesh $\de$} is a $k$-marked domain $D_k^\de=(D^\de;P_1^\de,\ldots,P_k^\de)$ such that $D^\de$ is the interior of a union of closed hexagonal faces of $\de H$, and each $P_j^\de$ is a vertex of $\de H$ incident to a unique hexagon inside $D$.
\end{dfn}

Let $\P^\de_p$ denote the law of percolation at probability $p$ on $D_4^\de$ (more precisely, face percolation on the hexagons inside $D^\de$). We say a {\bf blue} (yellow) {\bf crossing} of $D_4^\de$ occurs if there is a path of blue (yellow) hexagons joining $A_1(D_4^\de)$ to $A_3(D_4^\de)$. Let $A_j^+$ denote the set of hexagons in the external boundary of $D^\de$ with at least one edge contained in $A_j$; it is sometimes helpful to think of the $A_j^+$ as having predetermined colors, and these are shown with darker shading in the figures.

We now clarify the sense in which the $D_4^\de$ approximate $D_4$ as $\de\decto0$. To do so we need a formal definition of ``curves modulo reparametrization'':

\begin{dfn} \label{dfn:curve}
A distance function on the space $\cC_0$ of continuous functions $f:[0,1]\to\R^d$ is given by
$$\dcurve(f_1,f_2) \equiv \inf_\phi \nrm{ f_1\circ\phi-f_2 }_\infty$$
where the infimum is taken over increasing homeomorphisms $\phi$ of $[0,1]$. Say that $f_1,f_2$ are {\bf equivalent up to reparametrization}, denoted $f_1\sim f_2$, if $\dcurve(f_1,f_2)=0$. A {\bf curve} is an element of the space $\cC \equiv \cC_0/\sim$, and we refer to the metric $\dcurve$ on $\cC$ as the {\bf uniform metric}.
\end{dfn}

The space $\cC$ is separable and complete. For $\gam\in\cC$ we will frequently abuse notation and write $\gam$ also for a representative of $\gam$ in $\cC_0$.

\begin{dfn} \label{dfn:domain.unif.conv}
For bounded $k$-marked domains $D_k^\de,D_k$, say that $D_k^\de$ {\bf converges uniformly} to $D_k$ if $A_j(D_k^\de)\to A_j(D_k)$ uniformly for all $j$.
\end{dfn}

Recall that the critical probability for site percolation on $T$ is $p_c=1/2$; from now on we write $\P^\de\equiv \P^\de_{1/2}$. Here then is a statement of Smirnov's theorem confirming the Langlands et al.\ conjecture:

\begin{thm}[Smirnov's theorem on crossing probabilities \cite{\smirnovperc,\smirnovpercarxiv}]
\label{thm:smirnov}
Let $D_4^\de$ be four-marked discrete domains converging uniformly to the bounded four-marked Jordan domain $D_4$. Then
$$\Phi^\de(D_4^\de)
\equiv \P^\de(D_4^\de \text{ has a blue crossing})$$
converges to a limit $\Phi(D_4)\in(0,1)$ which is conformally invariant.
\end{thm}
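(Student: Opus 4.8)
The plan is to follow Smirnov's strategy of introducing three ``color-switching'' functions on the vertices of $\de H$ inside $D^\de$ which encode percolation crossing events, show that these are approximately discrete holomorphic (satisfy discrete Cauchy--Riemann relations up to error $o(1)$ as $\de\decto0$), extract subsequential limits that are genuinely holomorphic, identify the limit as the conformal map onto an equilateral triangle, and read off both convergence and conformal invariance of $\Phi$. Concretely, for a vertex $z$ of $\de H$ inside $D^\de$ and $j\in\{1,2,3\}$, let $H_{j}^\de(z)$ be the probability that there is a simple blue-yellow interface path from (a neighborhood of) $A_{j+1}^+$ to $z$ which separates $z$ together with the arc $A_j^+$ from $A_{j-1}^+$ and $A_{j+1}^+$ --- more precisely, the standard event that $z$ is connected by disjoint monochromatic paths of the appropriate colors to the three arcs $A_1,A_2,A_3$ in the cyclic pattern. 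The key combinatorial input is the \emph{color-switching lemma}: by exploring the interface from an edge adjacent to $z$, one sees that the discrete ``contour integrals'' of $H_1^\de,H_2^\de,H_3^\de$ around any single hexagon cancel in pairs, because flipping the color of the pivotal hexagon bijects configurations realizing $H_j^\de$ at one neighbor with those realizing $H_{j+1}^\de$ at the next. This yields that $H_1^\de + \tau H_2^\de + \tau^2 H_3^\de$ (with $\tau=e^{2\pi i/3}$) has vanishing discrete contour integrals around every face, i.e.\ is discrete holomorphic up to $o(1)$.

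The next step is an equicontinuity / precompactness argument: one shows the $H_j^\de$ are uniformly Hölder (this is where the \emph{a priori} percolation estimates enter --- the Russo--Seymour--Welsh bounds on crossing probabilities of annuli, giving polynomial decay of the relevant arm events and hence uniform Hölder control on the $H_j^\de$ near the boundary and in the interior). Boundedness plus equicontinuity gives, via Arzelà--Ascoli, subsequential uniform limits $h_1,h_2,h_3$ on $\clos D$. Passing the approximate discrete-holomorphicity to the limit shows $h_1+\tau h_2+\tau^2 h_3$ and $h_1+h_2+h_3$ are holomorphic; combined with the boundary identification (each $h_j$ vanishes on $A_{j-1}$ and the $h_j$ satisfy prescribed relations on the arcs coming from the definition of the events) one deduces that $H\equiv h_1+h_2+h_3$ is the unique conformal map from $D$ onto the equilateral triangle with vertices $1,\tau,\tau^2$ sending $P_1,P_2,P_3$ to the three vertices and $P_4$ to the point on the appropriate side determined by the conformal modulus. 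Because the limit is characterized by a Dirichlet-type boundary value problem, it does not depend on the approximating sequence $D_4^\de$, nor on the lattice embedding, so the full (not merely subsequential) limit exists; and because conformal maps compose, the value $\Phi(D_4)=h_1(P_4)$ (a coordinate of the image point) is manifestly a conformal invariant of $D_4$. Carleson's observation that in the equilateral-triangle coordinates the crossing probability is simply a ratio of side lengths then also gives $\Phi\in(0,1)$ strictly, using RSW to rule out the endpoints $0$ and $1$.

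The main obstacle I expect is the \emph{boundary analysis}: proving that the discrete-holomorphic functions $H_j^\de$ have the right boundary behavior in the limit, with enough uniformity that (i) the limit is continuous up to $\clos D$, (ii) the boundary conditions pin down $H$ uniquely as the triangle map, and (iii) the fourth marked point $P_4^\de$ behaves well under the convergence $D_4^\de\to D_4$. This requires quantitative RSW-type estimates near the boundary arcs and a careful treatment of the corners $P_j$, and it is precisely the step where the special symmetry of the triangular lattice (the fact that at a hexagon three edges meet, giving the three interface colors a $\Z/3$ rotational symmetry, and that $p_c=1/2$ makes blue and yellow exchangeable) is used in an essential way. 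A secondary technical point is verifying that the discrete contour-integral estimate is genuinely $o(1)$ and not merely $O(1)$ --- this is the heart of Smirnov's argument and relies on the color-switching bijection together with the observation that the discrepancy at each face is governed by a three-arm event whose probability is $o(1)$ uniformly. The interior estimates (equicontinuity away from $\pd D$) and the identification of the limit given good boundary control are comparatively routine complex analysis.
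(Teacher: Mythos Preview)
Your proposal follows essentially the same route as the paper: define three separating-probability functions, prove a color-switching identity giving approximate discrete Cauchy--Riemann relations, use RSW for equicontinuity and Arzel\`a--Ascoli for subsequential limits, then pin down the limit via a boundary-value characterization (Beffara's lemma) as the conformal map to an equilateral triangle.

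Two points need correcting. First, your description of the observable conflates two different objects. The function $H_j^\de(z)$ (the paper's $s_j^\de$) is the probability of a \emph{single} blue path separating $z$ from the arc $A_j$; the three-arm event you describe (``$z$ connected by disjoint monochromatic paths of the appropriate colors to the three arcs'') is not $H_j^\de(z)$ itself but rather its discrete \emph{derivative} $\P[E_j(z)\setminus E_j(w)]$ for $w\sim z$. It is this derivative that the color-switching bijection acts on, and it is this derivative that is $O(\de^\lambda)$ by the arm-event bound. Second, you have the roles of the two linear combinations reversed: $h_1+h_2+h_3$ is identically $1$ (by the maximum principle and the boundary conditions), while the conformal map $D_3\to\Delta$ is $f=h_1+\tau h_2+\tau^2 h_3$. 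The crossing probability is then $\Phi(D_4)=h_2(P_4)$, a real coordinate of $f(P_4)$, not a value of the sum $H$.
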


\begin{rmk} \label{rmk:admiss}
It will be clear from the proof that the above assumption of uniform convergence is unnecessarily strong. In fact, in order to prove the scaling limit for the exploration path we will require a version of Thm.~\ref{thm:smirnov} which holds for a more general notion of discrete approximation, which will be conceptually straightforward but slightly tricky to describe. With a view towards keeping the exposition simple, we ignore the issue for now and address it in \S\ref{ssec:admiss}.
\end{rmk}

\subsection{FKG, BK, and RSW inequalities}
\label{ssec:fkg.bk.rsw}

In this section are collected some results of basic percolation theory which will be needed in the proof.

For the first two results, the FKG and BK inequalities, the graph structure is irrelevant so we return to a more general setting: let $\Om\equiv\set{0,1}^V$, and let $\P_p$ denote the law of site percolation at probability $p$ on $V$. For $\om,\om'\in\Om$, say $\om\le\om'$ if the inequality holds coordinate-wise. A random variable $X$ on $(\Om,\cF)$ is called {\bf increasing} if $X(\om)\le X(\om')$ for all $\om\le\om'$, and an event $A\in\cF$ is increasing if $\I_A$ is increasing. The following inequality tells us that increasing events are, as naturally expected, positively correlated:

\newtheorem*{thm:fkg}{FKG inequality}

\begin{thm:fkg}[Harris \cite{\harrisineq}, Fortuin, Kasteleyn, Ginibre \cite{\fkg}]
If $X,Y$ are increasing random variables on $(\Om,\cF)$, then $\E_p(XY)\ge(\E_p X)(\E_p Y)$. In particular, increasing events are positively correlated.
\end{thm:fkg}

Here is another useful result which gives bounds in the other direction: for $A,B\in\cF$ increasing events depending only on the states of finitely many sites, let $A \square B$ denote the event that there are {\bf disjoint witnesses} for $A$ and $B$ --- i.e., $\om \in A \square B$ if and only if there exist disjoint sets $I_A,I_B\subset V$ such that $\om'|_{I_A} = \om|_{I_A}$ implies $\om'\in A$, and $\om'|_{I_B} = \om|_{I_B}$ implies $\om'\in B$. The following inequality says that the existence of disjoint witnesses for two events is less likely than the simple intersection of the two events:

\newtheorem*{thm:bk}{BK inequality}

\begin{thm:bk}[van den Berg, Kesten \cite{\bk}]
If $A,B\in\cF$ are both increasing events depending only on the states of finitely many edges, then $\P_p(A \square B) \le \P_p(A) \P_p(B)$.
\end{thm:bk}

We remark that van den Berg and Kesten conjectured that their inequality could be generalized to arbitrary sets; this remained open for almost a decade until it was resolved by Reimer \cite{\reimer}.

The third result is specific to planar percolation. Russo \cite{\russo} and Seymour and Welsh \cite{\swperc} proved that in an $m\times n$ discrete rectangle, the probability of an open crossing between the length-$m$ sides has a non-zero lower bound depending only on the aspect ratio $m/n$. The following is a straightforward consequence of their result:

\begin{thm}
\label{thm:rsw}
There exist positive constants $c,\lm$ such that the $\P^\de$-probability of a blue crossing of an annulus with inner radius $r$ and outer radius $R$ is $\lesssim (r/R)^\lm$ provided $R/r,r/\de\ge c$.
\end{thm}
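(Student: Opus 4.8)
The plan is to derive the annulus-crossing bound stated in Theorem~\ref{thm:rsw} from the Russo--Seymour--Welsh rectangle estimate, via a standard ``stacking of concentric annuli'' argument. First I would fix the precise RSW input: there exists a constant $\rho_0\in(0,1)$ such that for the square annulus $\ann{0}{r}{2r}$ (say, using $\ell^\infty$-balls so that the annulus decomposes into four overlapping $r\times 2r$ rectangles and four $2r\times r$ rectangles), the $\P^\de$-probability that there is a \emph{yellow} circuit in $\ann{0}{r}{2r}$ surrounding the inner disc is bounded \emph{below} by some $\rho_0>0$, uniformly in $r/\de\ge c_0$ for a suitable constant $c_0$. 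This is exactly the RSW circuit bound: glue a yellow crossing of the long direction in each of the four rectangles composing the annulus (each has aspect ratio $2$), using the FKG inequality to multiply the probabilities, and the geometry forces the union of these crossings to contain a surrounding yellow circuit. The passage from square ($\ell^\infty$) annuli to Euclidean annuli costs only a fixed multiplicative change of radii, which can be absorbed into the constant $c$.

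The second step is the stacking. Given the Euclidean annulus $\ann{0}{r}{R}$ with $R/r$ large, choose $N = \lfloor \log_2 (R/r) \rfloor - O(1)$ disjoint concentric dyadic square annuli $\ann{0}{r_k}{2r_k}$ with $r_k = 2^k r'$ sitting inside $\ann{0}{r}{R}$, where $r' \asymp r$ is chosen large enough that the innermost one still satisfies $r'/\de\ge c_0$ (this is where the hypothesis $r/\de\ge c$ is used). A blue crossing of $\ann{0}{r}{R}$ from the inner to the outer boundary must traverse every one of these $N$ disjoint annuli, and hence, for \emph{each} $k$, the event ``there is a yellow circuit in $\ann{0}{r_k}{2r_k}$'' fails --- a yellow circuit would block any blue path from inside to outside. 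The presence-of-a-yellow-circuit events in the disjoint annuli are increasing in the yellow configuration, equivalently decreasing in the blue configuration, and they depend on disjoint sets of hexagons, so by independence (or just FKG in the appropriate direction) the probability that all of them fail is at most $(1-\rho_0)^N$. Therefore
\begin{equation*}
\P^\de(\ann{0}{r}{R}\text{ has a blue crossing}) \le (1-\rho_0)^N \le C\,(r/R)^\lm,
\end{equation*}
with $\lm = -\log_2(1-\rho_0) > 0$ and $C$ absorbing the $O(1)$ loss in the count $N$ and the comparison between square and round annuli; this holds provided $R/r$ and $r/\de$ exceed a suitable constant $c$, as claimed.

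I expect the only real subtlety --- ``the hard part'' --- to be the bookkeeping at the two boundary scales rather than anything structural: making sure the innermost dyadic annulus is still ``macroscopic enough'' relative to the mesh $\de$ for the RSW circuit bound to apply (forcing the hypothesis $r/\de\ge c$ and shrinking the usable range slightly), and handling the geometric fact that the RSW bound is naturally stated for a \emph{fixed} aspect ratio, so one works with dyadic ($2:1$) annuli and accepts that only $\asymp\log_2(R/r)$ of them fit. A minor additional point is that the RSW theorem as quoted gives a lower bound on a \emph{crossing} probability depending on the aspect ratio; promoting this to a lower bound on a \emph{circuit} probability in an annulus is the classical gluing argument via FKG, and one should state it cleanly as a lemma before doing the stacking. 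Everything else --- independence of disjoint-annulus events, monotonicity, the elementary estimate $(1-\rho_0)^N\le C(r/R)^\lm$ --- is routine.
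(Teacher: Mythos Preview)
Your argument is correct and is exactly the standard derivation of the annulus-crossing exponent from RSW: promote the RSW rectangle bound to a uniform lower bound on the probability of a yellow circuit in a fixed-ratio annulus via FKG gluing, then stack $\asymp\log_2(R/r)$ disjoint dyadic annuli and use independence to get $(1-\rho_0)^N\lesssim(r/R)^\lm$. The bookkeeping points you flag (mesh condition $r/\de\ge c$, square versus round annuli, the $O(1)$ loss in $N$) are precisely the places that need care, and you have handled them appropriately.

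Note, however, that the paper does not actually give its own proof of Thm.~\ref{thm:rsw}: it states the result as a ``straightforward consequence'' of the Russo--Seymour--Welsh theorem and refers the reader to \cite{\grimmett,\brperc,\wernerparkcity} for proofs. Your write-up is the standard argument one finds in those references, so there is nothing to compare against in the paper itself.
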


For proofs of the above results see \cite{\grimmett,\brperc}. A proof of Thm.~\ref{thm:rsw} can also be found in \cite{\wernerparkcity}.

\subsection{Preharmonic functions}
\label{sec:perc.harmonicity}

We now arrive at the central argument of Smirnov's proof. By way of historical background Smirnov mentions the classical connection of Kakutani \cite{\kakutani} between Brownian motion and harmonic functions described in \S\ref{sec:bm.harm}. The key to Smirnov's proof was the discovery of a ``preharmonic'' function $s_2^\de$ on $D_2^\de$, the ``separating probability function,'' which encodes the crossing probability. One then shows that as $\de\decto0$, $s_2^\de$ converges to a harmonic function $s_2$ which encodes the limiting crossing probability. The result follows because $s_2$ is a conformal invariant.

The function $s_2^\de$ is part of a triple $(s_1^\de,s_2^\de,s_3^\de)$ which we now define: let $D_4^\de$ be a four-marked domain, and let $D_3^\de$ be the three-marked domain obtained by forgetting $P_4^\de$. For $z\in D^\de\cap H$ and $1\le j\le 3$, let
$$E_j(z) \equiv E_j(z; D^\de_3)$$ denote the event that there is a blue $A_{j-1}$--$A_{j+1}$ \emph{simple} path separating $z$ from $A_j$ (indices taken modulo 3). A schematic diagram is shown in Fig.~\ref{fig:sep}. The {\bf separating probability functions} for $D_3^\de$ are
\beq \label{eq:sep.prob}
s^\de_j(z) \equiv \P^\de[ E_j(z) ],\quad 1\le j\le 3.
\eeq
In particular, notice that $s_2^\de(P_4^\de)$ is exactly the crossing probability $\Phi^\de(D_4^\de)$. The plan is then to prove that $s_2^\de$ converges to a function $s_2$ which is a conformal invariant of $D_3$. But $s_2(P_4)$ is the desired limiting crossing probability $\Phi(D_4)$, which must then be a conformal invariant.

\begin{figure}
\centering
\includegraphics[height=0.5\textwidth]{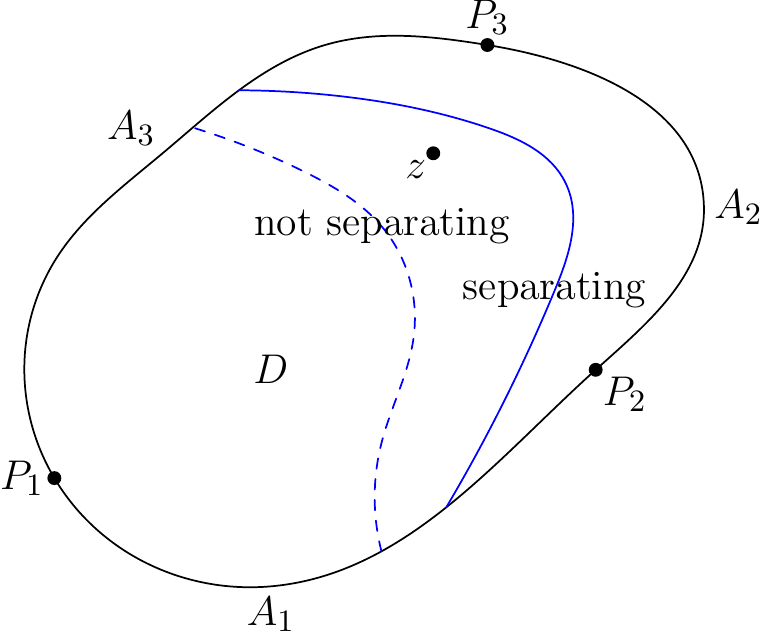}
\caption{Schematic of separating event $E_2(z)$}
\label{fig:sep}
\end{figure}

\subsubsection{Cauchy-Riemann equations}

The approach is to take advantage of the relationships among the $s_j^\de$ to show that they form a (discrete) ``harmonic conjugate triple.'' This concept is easier to describe in the continuous setting: we will say that real-valued harmonic functions $h_1,h_2,h_3$ form a {\bf harmonic conjugate triple} if $f=h_1+\ze h_2 + \ze^2 h_3$ is holomorphic, where $\ze\equiv\ze_3\equiv e^{2\pi i/3}$.

Here is a version of the Cauchy-Riemann equations for harmonic conjugate triples: consider the directional derivatives
$$\pd_k f(z) \equiv \lim_{h\to0} \f{f(z+h\ze^{k-1})-f(z)}{h},\quad
    1\le k\le 3.$$
The same argument used in deriving the usual Cauchy-Riemann equations (see e.g.\ \cite[p.~12]{\sscplx}) gives
\beq \label{eq:match.three.derivs}
\pd_k f = \frac{1}{\ze} \pd_{k+1} f = \frac{1}{\ze^2} \pd_{k+2} f,
\eeq
It will turn out that $h_1+h_2+h_3\equiv1$, so the $\pd_k h_j$ are uniquely determined by the real linear relations
$$\begin{pmatrix} \pd_k \real f \\ \pd_k \imag f \\ 0 \end{pmatrix}
= \begin{pmatrix}
    1 & \real \ze & \real \ze^2 \\
    1 & \imag \ze & \imag \ze^2 \\
    1 & 1 & 1
\end{pmatrix}
\begin{pmatrix} \pd_k h_1 \\ \pd_k h_2 \\ \pd _k h_3\end{pmatrix}.$$
Matching coefficients in \eqref{eq:match.three.derivs} gives the $2\pi/3$-rotational Cauchy-Riemann equations
\beq \label{eq:three.cauchy.riemann}
\pd_k h_j
= \pd_{k+1} h_{j+1}
= \pd_{k+2} h_{j+2},
\quad 1\le k,j\le 3,
\eeq
with indices taken modulo three.

We return now to the discrete setting. Recalling the definition \eqref{eq:sep.prob}, for $w\sim z$ in $\de H\cap D^\de$ define
$$d_z s_j^\de(w)
\equiv \P^\de[ E_j(z) \setminus E_j(w) ].$$
The following result is Smirnov's ``color switching identity,'' and is a discrete version of the $2\pi/3$-rotational Cauchy-Riemann equations \eqref{eq:three.cauchy.riemann}.

\begin{figure}
\subfloat[$E_3^\de(z_3) \setminus E_3^\de(w) = B_1 B_2 Y_3$]{\label{fig:switch.bbw}\includegraphics[height=0.37\textwidth]{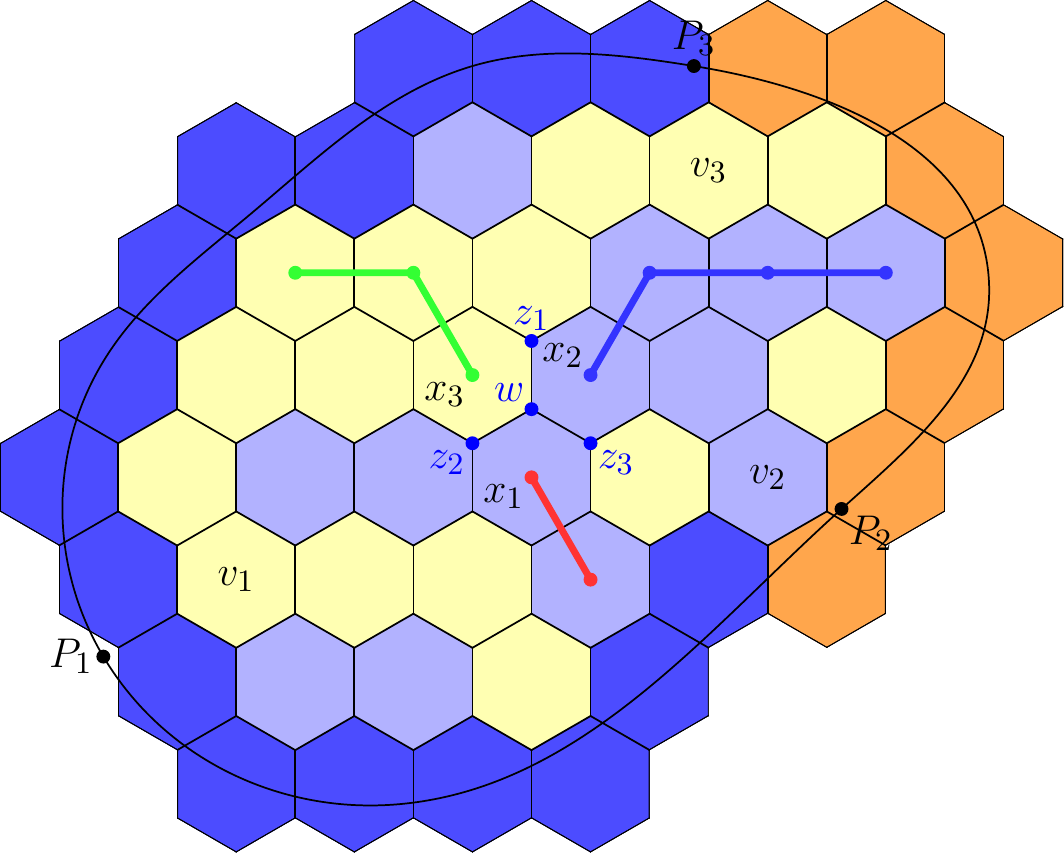}}
\qquad
\subfloat[Interface; ``innermost'' disjoint paths]{\label{fig:switch.inner}\includegraphics[height=0.37\textwidth]{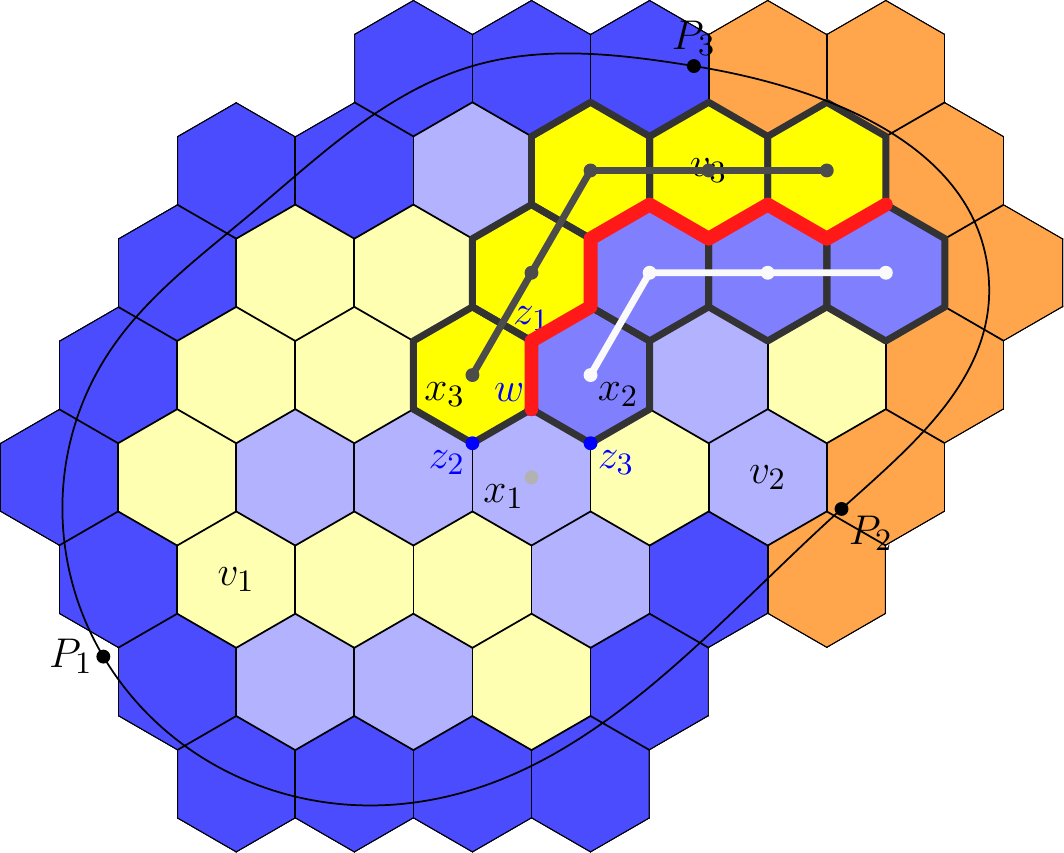}}
\caption{Color switching}
\end{figure}

\begin{ppn}[color switching] \label{ppn:color.switch}
In the setting of Thm.~\ref{thm:smirnov}, let $w\in \de H \cap D^\de$, and fix a counterclockwise ordering $z_1,z_2,z_3$ of the neighbors of $w$ in $\de H$, assumed to lie in $D^\de$. Then
\beq \label{eq:color.switch}
d_{z_k} s_j^\de(w)
= d_{z_{k+1}} s_{j+1}^\de(w)
= d_{z_{k+2}} s_{j+2}^\de(w).
\eeq
Further $d_z s_j^\de(w) = O(\de^\lm)$ where $\lm$ is as in Thm.~\ref{thm:rsw}.

\begin{proof}
For now we suppress the dependence on $\de$ from the notation. By cyclic permutation of indices it suffices to prove the result with $k=j$. Let $x_j\in T$ be the center of the hexagon incident to $w$ opposite $z_j$, and let $C_j$ denote the event that there is a path $\eta_j$ of color $C\in\set{B=\text{blue},Y=\text{yellow}}$ joining $x_j$ to $A_j$. Then
$$E_j(z_j) \setminus E_j(w) = B_{j-1} Y_j B_{j+1},$$
where the right-hand side denotes the event that $B_{j-1} \cap Y_j \cap B_{j+1}$ occurs via \emph{disjoint} paths $\eta_{j-1}$, $\eta_j$, $\eta_{j+1}$. Fig.~\ref{fig:switch.bbw} illustrates this equality of events; a detailed proof of this statement may be found in \cite[Ch.~7]{\brperc}. Thus \eqref{eq:color.switch} may be restated as
$$\P(Y_1 B_2 B_3)=\P(B_1 Y_2 B_3)=\P(B_1 B_2 Y_3).$$
By symmetry it suffices to prove the second identity, which by flipping blue and yellow is the same as $\P(Y_1B_2Y_3)=\P(B_1B_2Y_3)$.

Conditioned on $B_2 Y_3$, consider exploring the interface beginning at the edge separating $H_{x_2}$ (blue) from $H_{x_3}$ (yellow), oriented so that blue is to the right. This exploration path $P$ must end on some vertex of $H \cap (A_2 \cup A_3)$, and the hexagons adjacent to $P$ determine the ``innermost'' disjoint paths $\eta_2$ and $\eta_3$ corresponding to $B_2$ and $Y_3$ respectively (erase loops so that the $\eta_j$ are simple paths). Note that these paths are determined without looking at any sites not incident to $P$.

Conditioning on the explored sites, the event $C_1B_2Y_3$ occurs if and only if there is a path of color $C$ from $x_1$ to $A_1$ which is \emph{disjoint} from the explored sites. But the probability of this event clearly does not depend on whether $C$ is blue or yellow, so
$\P(Y_1B_2Y_3|B_2Y_3)=\P(B_1B_2Y_3|B_2Y_3)$ and \eqref{eq:color.switch} is proved.

Finally, since the maximum distance from $w$ to an arc $A_k$ is of constant order, it follows from Thm.~\ref{thm:rsw} that $d_z s_j^\de(w) = O(\de^\lm)$.
\end{proof}
\end{ppn}

\begin{rmk}
In fact $d_z s_j^\de(w) = O(\de^{2/3})$; see \cite{\smirnovpercarxiv}. However, we will see that the $s_j^\de$ converge to harmonic functions, which suggests that the discrete derivatives
$$s_j^\de(z)-s_j^\de(w)=d_z s_j^\de(w) - d_w s_j^\de(z)$$
are in fact $O(\de)$ --- that is, there is some cancellation between the disjoint events $E_j(z)\setminus E_j(w)$ and $E_j(w)\setminus E_j(z)$. It is unclear why this happens, and we turn now instead to proving relations among contour integrals, where the estimate of Propn.~\ref{ppn:color.switch} is sufficient.
\end{rmk}

\subsubsection{Integral relations}
\label{ssec:rel.integrals}

In taking the limit $\de\decto0$ it is easier to use global manifestations of holomorphicity such as Morera's theorem (see e.g.\ \cite[Thm.~5.1]{\sscplx}). In this section we use Propn.~\ref{ppn:color.switch} to prove relations on discrete contour integrals.

Suppose $(h_1,h_2,h_3)$ form a harmonic conjugate triple: since $f$ and $1$ are both holomorphic, by Morera's theorem
$$\begin{pmatrix} 0 \\ 0\end{pmatrix}
=\begin{pmatrix} 1 & \ze & \ze^2 \\
    1 & 1 & 1
\end{pmatrix} \begin{pmatrix} I_1 \\ I_2 \\ I_3 \end{pmatrix},\quad
I_j \equiv \oint_C h_j \d z,
$$
therefore $\oint_C h_{j+1} \ dz = \ze \oint_C h_j \ dz$. It is instructive to note that this relation can alternately be derived from the $2\pi/3$-rotational Cauchy-Riemann equations \eqref{eq:three.cauchy.riemann}, using Stokes's theorem. To see this, write
$$dx_1 \equiv dx,\quad
dx_2 \equiv -\frac{1}{2} \d x +  \frac{\sqrt{3}}{2} \d y,\quad
dx_3 \equiv -\frac{1}{2} \d x -  \frac{\sqrt{3}}{2} \d y.$$
Then
\begin{align*}
\sum_{k=1}^3 \pd_k h_j \d x_k
&= \f{3}{2} \lb \pd_x h_j \d x + \pd_y h_j \d y \rb
= \f{3}{2} dh,\\
\sum_{k=1}^3 \ze^{k-1} \d x_k
&= \f{3}{2} \lb dx + i \d y \rb = \f{3}{2} dz.
\end{align*}
Thus, for a contour $C=\pd U$, Stokes's theorem implies
$$\oint_C h_j \ dz
= \frac{4}{9} \int_U
    \lp
        \sum_{k=1}^3 \pd_k h_j \d x_k
    \rp
    \wedge
    \lp \sum_{k=1}^3 \ze^{k-1} dx_k \rp.$$
Applying the Cauchy-Riemann equations \eqref{eq:three.cauchy.riemann} then gives
\begin{align*}
&\oint_C h_j \ dz
= \frac{4}{9}  \int_U
    \lp \sum_{k=1}^3 \pd_{k+1} h_{j+1} \d x_k \rp
\wedge \lp \sum_{k=1}^3 \ze^{k-1} dx_k \rp \\
&= \f{1}{\ze} \cdot \f{4}{9}
    \int_U \lp \sum_{k=1}^3  \pd_k h_{j+1} \d x_{k-1} \rp \wedge
    \lp \sum_{k=1}^3 \ze^{k-1} \d x_{k-1} \rp
= \frac{1}{\ze} \oint_C h_{i+1} \ dz,
\end{align*}
the same relation derived above.

We will now prove a discrete analogue of this relation for certain contours; the proof will be a discrete version of the Stokes' theorem computation above. We will say that a {\bf discrete triangular contour} is a contour which is a triangular path. For a triangular contour $C$ oriented counterclockwise in $D^\de$, we define the \textbf{discrete contour integral}
$$\doint{C} s_j^\de(z) \ dz = \sum_{e \in C} s_j^\de(z(e)) (e^+-e^-),$$ where $z(e)$ is the vertex of $H$ immediately to the left of the edge $e=(e^-,e^+)$.

\begin{ppn}
\label{ppn:rotate.int}
In the setting of Thm.~\ref{thm:smirnov}, if $C$ is a discrete triangular contour in $D^\de$ of length $L$, then
$$\doint{C} s_{j+1}^\de(z) \d z - \ze \doint{C} s_j^\de(z) \d z
= O( \de^\lm  L)$$
where $\lm$ is as in Thm.~\ref{thm:rsw}.

\begin{proof}
Let $\inter C$ denote the points of $\de H$ inside $C$, and use $1+\ze+\ze^2=0$ to write
$$\doint{C} s_j^\de(z) \d z
= i\sqrt{3} \sum_{w \in C^\circ} \sum_{z \sim w, z \notin C^\circ} s_j^\de(w) (z-w)
= i \sqrt{3} \sum_{w\in C^\circ} \sum_{z\sim w, z\in C^\circ}
    s_j^\de(w) (w-z).$$
This can be rewritten as
\begin{align*}
&\sum_{\set{w,z}} (z-w) [s_j^\de(z)-s_j^\de(w)]
= \sum_{\set{w,z}} (z-w) [ d_z s_j^\de(w) - d_w s_j^\de(z) ] \\
&= \sum_{w\in C^\circ} \sum_{z\sim w} d_z s_j^\de(w) (z-w)
    -\sum_{w\in C^\circ} \sum_{z\sim w, z\notin C^\circ} d_z s_j^\de(w) (z-w)\equiv S_j - E_j.
\end{align*}
Propn.~\ref{ppn:color.switch} implies $S_{j+1}=\ze S_j$ while $E_j = O(\de^\lm L)$, which proves the result.
\end{proof}
\end{ppn}

\subsection{Completing the proof}
\label{sec:perc.completing}

We can easily extend $s_j^\de$ to all of $\de H$ by setting $s_j^\de(z)$, $z\notin D^\de$, to equal $s_j^\de(z')$ where $z'$ is one of the hexagonal vertices inside $D^\de$ closest to $z$. Let $g_j^\de$ denote the extension to the closure of $D$ by linear interpolation. We will prove Thm.~\ref{thm:smirnov} by showing that the $g_j^\de$ converge uniformly to the triple characterized by the following lemma, due to Beffara:

\begin{lem}[Beffara \cite{\beffara}] \label{lem:unique.triple}
Let $D_3$ be a three-marked domain. There is unique triple of continuous real-valued functions $(h_1,h_2,h_3)$ defined on the closure of $D$ satisfying the following two conditions:
\begin{enumerate}[(i)]
\item For any triangular contour $C$ in $D$, $\oint_C h_{j+1}(z) \ dz = \ze \oint_C h_j(z) \ dz$.
\item For any $z \in A_j$, $h_j(z)=0$ and $h_{j+1}(z)+h_{j+2}(z)=1$.
\end{enumerate}
Moreover these functions are harmonic, hence a conformal invariant of $D_3$.

\begin{proof}
Property (i) and Morera's theorem give that $f=h_1+\ze h_2 + \ze^2 h_3$ and $h=h_1+h_2+h_3$ are holomorphic, hence the $h_j$ are harmonic. Property (ii) and the maximum principle then gives $h\equiv1$.

On the arc $A_j$, property (ii) implies that $f$ is a convex combination of $\ze^j$ and $\ze_{j+1}$. Thus, as $z$ travels along $\pd D$, $f(z)$ winds around $\pd\De$ where $\De$ is the equilateral triangle with marked vertices $1,\ze,\ze^2$. It follows from the argument principle (see e.g.\ \cite[Thm.~4.1]{\sscplx}) that $f$ is unique conformal equivalence of $D_3$ with $\De$. The result follows since the $h_j$ are uniquely determined by $(f,h)$.
\end{proof}
\end{lem}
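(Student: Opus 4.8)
The plan is to establish the two properties (existence and uniqueness) somewhat separately, deriving harmonicity as a byproduct of the argument. Suppose first that a triple $(h_1,h_2,h_3)$ satisfying (i) and (ii) exists. Then for the functions $f=h_1+\ze h_2+\ze^2 h_3$ and $h=h_1+h_2+h_3$, property (i) combined with the $1+\ze+\ze^2=0$ relation gives $\oint_C f(z)\,dz = 0$ and $\oint_C h(z)\,dz = 0$ for every triangular contour $C$ in $D$; since triangular contours suffice to verify the hypothesis of Morera's theorem, both $f$ and $h$ are holomorphic, and hence each $h_j$ (being a real-linear combination of $f,\bar f$ via the invertible matrix relating $(\real f,\imag f,h)$ to $(h_1,h_2,h_3)$) is harmonic. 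This is the ``moreover'' part.

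Next I would pin down the boundary behavior. From property (ii), on the arc $A_j$ we have $h_j\equiv 0$ and $h_{j+1}+h_{j+2}\equiv 1$, so $h\equiv 1$ on all of $\pd D$; since $h$ is harmonic in $D$, continuous on $\clos D$, and bounded, the maximum principle forces $h\equiv 1$ throughout $D$. With $h\equiv 1$ in hand, the relation $h_1+h_2+h_3=1$ lets us read off the boundary values of $f$: on $A_j$, $f = h_{j+1}\ze^{j+1\bmod 3}\cdot(\text{stuff})$ — more precisely, writing $h_{j+1}=t$, $h_{j+2}=1-t$, $h_j=0$ with $t\in[0,1]$, we get $f = t\,\ze^{(j+1)\bmod 3} + (1-t)\,\ze^{(j+2)\bmod 3}$, a convex combination of two adjacent vertices of the equilateral triangle $\De$ with vertices $1,\ze,\ze^2$. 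Hence as $z$ traverses $\pd D$ counterclockwise, $f(z)$ traverses $\pd\De$, passing through the vertex $\ze^{(j+1)\bmod 3}$ exactly when $z=P_{j+1}$. By the argument principle (or the standard fact that a holomorphic function on a Jordan domain, continuous up to the boundary, whose boundary trace is a homeomorphism onto the boundary of another Jordan domain, is a conformal bijection), $f$ is the unique conformal equivalence of $D_3$ onto $\De$ sending $P_j\mapsto\ze^{j}$. Since $h\equiv 1$ and $f$ are now uniquely determined, and the $h_j$ are recovered from $(f,h)$ by inverting the same linear relation, uniqueness of the triple follows.

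For existence I would run this backwards: let $f:D\to\De$ be the conformal equivalence of $D_3$ onto $(\De;1,\ze,\ze^2)$ guaranteed by the Riemann mapping theorem together with the three-point normalization (boundary correspondence extends continuously since $\pd D$ is a Jordan curve, by Carath\'eodory's theorem), set $h\equiv 1$, and define $(h_1,h_2,h_3)$ by solving the linear system $\real f = h_1 + h_2\real\ze + h_3\real\ze^2$, $\imag f = h_2\imag\ze + h_3\imag\ze^2$, $1 = h_1+h_2+h_3$; the coefficient matrix is invertible, so the $h_j$ are well-defined, real-valued, continuous on $\clos D$, and harmonic in $D$. Property (i) then holds by Morera applied in reverse (both $f$ and $1$ are holomorphic, so their contour integrals vanish, and taking real-linear combinations recovers the stated relation among the $\oint_C h_j$), and property (ii) is exactly the statement that the boundary trace of $f$ lies on the appropriate edge of $\pd\De$, which holds by construction.

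I expect the only genuinely delicate point to be the regularity of the conformal map $f$ up to the boundary — that is, invoking Carath\'eodory's theorem to get that $f$ extends to a homeomorphism $\clos D\to\clos\De$, so that the ``boundary winding'' argument is legitimate and property (ii) can be verified pointwise on each closed arc. Everything else is linear algebra, the maximum principle, and Morera's/the argument principle, all of which are standard; the interplay is clean precisely because the two defining properties are engineered to be equivalent to ``$f$ is the conformal map to $\De$ and $h\equiv 1$.''
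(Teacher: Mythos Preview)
Your proposal is correct and follows essentially the same approach as the paper: use Morera to show $f=h_1+\ze h_2+\ze^2 h_3$ and $h=h_1+h_2+h_3$ are holomorphic, deduce $h\equiv 1$ by the maximum principle, read off that $f$ maps $\pd D$ onto $\pd\De$, and invoke the argument principle to identify $f$ as the unique conformal equivalence $D_3\to\De$, from which the $h_j$ are recovered. You are more explicit than the paper in two respects --- you spell out the existence direction by starting from the Riemann map and solving the linear system, and you flag Carath\'eodory's theorem as the ingredient needed for the boundary argument --- both of which are welcome. One small slip: on $A_j$ the coefficient of $h_k$ in $f$ is $\ze^{k-1}$, so with $h_{j+1}=t$, $h_{j+2}=1-t$ you get $f=t\ze^{j}+(1-t)\ze^{j+1}$, not $t\ze^{j+1}+(1-t)\ze^{j+2}$; your subsequent claim $f(P_j)=\ze^{j}$ is nonetheless correct.
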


\begin{proof}[Proof of Thm.~\ref{thm:smirnov}]
It is clear that the $g_j^\de$ are uniformly bounded, and uniform equicontinuity follows from the bound on the discrete derivatives (Propn.~\ref{ppn:color.switch}), so the $g_j^\de$ have subsequential uniform limits by the Arzel\`a-Ascoli theorem. We must show that any subsequential limit satisfies the characterization of Lem.~\ref{lem:unique.triple}. Indeed, property (i) follows from Propn.~\ref{ppn:rotate.int} by approximating $C$ with discrete triangular contours, using uniform convergence to the limit and the uniform equicontinuity of the $g_j^\de$. Property (ii) is intuitively clear, and is easy to prove rigorously using Thm.~\ref{thm:rsw}.

It follows that
$\Phi^\de(D_4^\de) = g_2^\de(P_4^\de)$ converges to $\Phi(D_4)=h_2(P_4)$, which is a conformal invariant of $D_4$ by Lem.~\ref{lem:unique.triple}. This concludes the proof of Smirnov's theorem.
\end{proof}

Carleson noted that Cardy's formula takes a particularly simple form when the domain is an equilateral triangle. A consequence of the above proof is the verification of this formula for critical percolation on $T$:

\newtheorem*{thm:cardy}{Cardy's formula in Carleson's form}

\begin{thm:cardy}[Smirnov \cite{\smirnovperc,\smirnovpercarxiv}, Beffara \cite{\beffara}]
Let $D_3$ denote the three-marked domain obtained from $D_4$ by forgetting $P_4$, and $f$ the conformal equivalence $D_3\to\De$. Then
$\Phi(D_4)=x(D_4)$ where $x\equiv x(D_4)$ is defined by
$$f(P_4) = x \ze^2 + (1-x).$$

\begin{proof}
For $x\in A_3(\De)$ write $\De_x\equiv (\De;1,\ze,\ze^2,x)$: Thm.~\ref{thm:smirnov} implies
$$\Phi(D_4) = \Phi(\De_{f(P_4)}) = h_2(f(P_4))$$
where $(h_1,h_2,h_3)$ is the triple on $\De$ given by Lem.~\ref{lem:unique.triple}. But $h_1+\ze h_2 + \ze^2 h_3$ must simply be the identity on $\De$, so $h_2$ is linear on $A_3(\De)$ and the result follows.
\end{proof}
\end{thm:cardy}

In forthcoming work Mendelson, Nachmias, Sheffield and Watson prove an $O(\de^\ep)$ bound on the rate of convergence to Cardy's formula \cite{\mnwcardy}.

For two more percolation formulas of a similar nature see \cite{\schrammperc,\dubedatwatts,\schrammwatts}.

\subsection{The percolation exploration path}
\label{sec:perc.end}

Let $D^\de_2\equiv (D^\de;a^\de,b^\de)$ be a two-marked domain, and suppose $A_1^+,A_2^+$ are colored yellow, blue respectively. The {\bf percolation exploration path} $\gam^\de$ is the portion of the blue-yellow interface traveling from $a^\de$ to $b^\de$. The purpose of the remainder of this article is to prove the following:

\begin{thm}[Schramm \cite{\schramm}, Smirnov \cite{\smirnovpercarxiv, \smirnovperc}, Camia and Newman \cite{\cmnw}]
\label{thm:explore}
Let $\gam^\de$ denote the percolation exploration path in the discrete  domain $D_2^\de$. If $D_2^\de$ converges uniformly to the two-marked bounded Jordan domain $D_2$, then $\gam^\de$ converges weakly to chordal $\sle_6$ in $D_2$.
\end{thm}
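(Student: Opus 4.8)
The plan is to prove Thm.~\ref{thm:explore} in two stages, corresponding to \S\ref{ch:scaling} and \S\ref{ch:explore} as outlined in the introduction. First I would establish \emph{tightness}: the family of laws of $\gam^\de$ on the curve space $(\cC,\dcurve)$ is precompact, so that every sequence $\de_n\decto0$ has a subsequence along which $\gam^{\de_n}$ converges weakly. This is the Aizenman--Burchard machinery \cite{\abholder}: the key input is an \emph{a priori} multi-arm bound, namely that the probability of $k$ disjoint alternating-color crossings of an annulus $\ann{z}{r}{R}$ decays polynomially in $r/R$ uniformly in $\de$ (for $R/r, r/\de \geq c$). For $k=1$ this is exactly Thm.~\ref{thm:rsw}; the multi-arm version follows from the same RSW technology together with the BK inequality. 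From such a crossing bound, Aizenman--Burchard deduce a uniform modulus-of-continuity estimate (a H\"older-type bound on the parametrized curves, ruling out long thin ``fjords'' and fast oscillations), which by Arzel\`a--Ascoli--Prokhorov gives tightness in $\cC$. One also checks the endpoints $\gam^\de(0),\gam^\de(1)$ converge to $a,b$, using that $D_2^\de\to D_2$ uniformly and the $P_j^\de$ are pinned near $a,b$.

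Second, I would show that \emph{every} subsequential limit $\gam$ is chordal $\sle_6$ in $D_2$; combined with tightness this forces $\gam^\de\cconv\sle_6$. By conformal invariance it suffices to work in a fixed reference domain, say $\H$ with marked points $0,\infty$, transporting everything by the uniformizing map (here one must know the discrete domains converge well enough that the conformal maps and their boundary behavior converge — Rem.~\ref{rmk:admiss} flags that a slightly stronger/more flexible notion of discrete approximation than plain uniform convergence is needed, which is handled in \S\ref{ssec:admiss}). The strategy is to use the characterization of $\sle_6$ quoted in \S\ref{ssec:sle.six} (following Camia--Newman \cite{\cmnw}): a random curve satisfying conformal invariance together with the ``locality''/spatial-Markov form of the domain Markov property, plus a mild regularity condition, must be $\sle_6$. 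I would verify these axioms for $\gam$ by passing the discrete Markov property of the exploration path to the limit: the exploration path explores only hexagons adjacent to it, so conditioned on an initial segment $\gam^\de[0,t]$, the remainder is an exploration path in the slit discrete domain; tightness plus continuity of the relevant conformal maps lets one take $\de\decto0$. To pin down the parameter $\ka=6$ and actually identify the law (not just a Loewner-type structure), the crucial analytic input is Smirnov's theorem (Thm.~\ref{thm:smirnov}): Cardy's formula gives an explicit conformally invariant crossing probability for $\gam^\de$, and matching it against the corresponding crossing probability computed for $\sle_\ka$ (via the Loewner equation and a hypergeometric-type ODE, as in \S\ref{ch:explore}) singles out $\ka=6$. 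Concretely, one can follow Camia--Newman and Binder--Chayes--Lei \cite{\cmnw,\bclperci,\bclpercii}: build a polygonal approximation to $\gam$ by a ``stopping-time'' construction analogous to the Brownian construction in \S\ref{sec:bm.harm}, where each step follows the curve until it first makes a macroscopic crossing of a small semiannulus; the conditional direction of each step is governed by a crossing probability which, by Cardy's formula in the limit, has exactly the law producing a Brownian driving function $u_t=\sqrt{6}\,\bm_t$ in the Loewner equation. Feeding this driving function back through the Loewner equation and invoking the fact that $\sle_6$ is a.s.\ a curve \cite{\lswlerw,\rohdeschramm} (or, more carefully, arguing directly with the tight limit $\gam$ to avoid needing that theorem) identifies $\gam$ with chordal $\sle_6$.

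I expect the main obstacle to be the second stage, and within it the step of rigorously transferring the domain Markov property to the scaling limit and controlling the conformal maps uniformly. The discrete domain Markov property is essentially exact, but after conditioning on $\gam^\de[0,t]$ one is working in a random slit domain whose boundary is rough, and one must guarantee that the uniformizing conformal maps $g_t$ converge together with the curve, that the driving function $u_t=g_t(\gam(t))$ is well-defined in the limit (which, as the excerpt stresses, already requires work even deterministically), and that no ``macroscopic pinch points'' develop that would break the Markov structure — this is where the multi-arm a priori estimates are used a second time, now to control the geometry of the explored region. Quantifying ``converges uniformly'' for the discrete domains so that all of this goes through (the issue deferred to \S\ref{ssec:admiss}) is the delicate technical heart of the argument; once the Markov property and conformal invariance are in hand in the limit, the identification $\ka=6$ via Cardy's formula is comparatively clean, since Smirnov's theorem does all the heavy lifting on the analytic side.
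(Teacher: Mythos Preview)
Your proposal is correct and follows essentially the paper's route: tightness via Aizenman--Burchard (\S\ref{ch:scaling}) using RSW and BK, then identification of any subsequential limit via the $\ep$-filling characterization of Propn.~\ref{ppn:char.sle6}, with Cardy's formula in the generalized form Thm.~\ref{thm:smirnov.gen} supplying the law of each filling. The ``no macroscopic pinch points'' step you anticipate is made precise in Propn.~\ref{ppn:admiss.cardy} as the statement that the limit curve is self-avoiding, proved via the six-arm interior and three-arm boundary exponents (Lems.~\ref{lem:ksz} and~\ref{lem:lsw.three.arm}); and, as you parenthetically suspect, the paper deliberately avoids invoking the Rohde--Schramm theorem that $\sle_6$ is generated by a curve.
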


\begin{rmk} \label{rmk:wk.conv}
For random variables on a separable metric space, there are several equivalent ways to define convergence in law (weak convergence, convergence in distribution): in our setting, a natural formulation is via the {\bf Skorohod coupling theorem} (more precisely, the generalization due to Dudley \cite{\skorohoddudley}) which states that random variables converge in law if and only if they can be defined on a joint probability space in which they converge almost surely.
\end{rmk}

In \S\ref{ch:sle} we define $\sle$ and give some characterizations of $\sle_6$. \S\ref{ch:scaling} shows that the $\gam^\de$ have \emph{subsequential} weak limits. Finally, \S\ref{ch:explore} shows that all subsequential limits coincide with chordal $\sle_6$, concluding the proof.

\section{Schramm-Loewner evolutions}
\label{ch:sle}

In this chapter we derive the chordal Loewner differential equation
$$\dot g_t(z) = \f{2}{g_t(z)-u_t},\quad g_0(z)=z,\quad u_t=g_t(\gam(t))$$
for a self-avoiding (see Defn.~\ref{dfn:self.avoiding} below) curve traveling from $0$ to $\infty$ in $\clos\H$. We then show that the $\sle$ axioms imply that $u_t$ is a Brownian motion. We conclude with a characterization of $\sle_6$ which will be used in the proof of Thm.~\ref{thm:explore}. A fully rigorous treatment of $\sle$ is beyond the scope of this article, but we attempt to highlight the main points.

\subsection{Normalizing the conformal maps}

By conformal invariance it suffices to define chordal $\sle$ for a single two-marked domain, and it turns out a convenient choice is $\H_2\equiv(\H;0,\infty)$. In this section \emph{only}, we take $\C$ with the spherical metric, so that $\H$ is a bounded domain with compact closure $\clos\H=\H\cup\clos\R$. We will begin by describing the Loewner slit mapping theorem for a simple curve $\gam$ traveling chordally in $\H_2$, and extend to more general curves afterwards.

Recall that conformal automorphisms of a simply connected domain are determined up to three real degrees of freedom, for example, the group of conformal automorphisms of $\H$ is the group of M\"obius transforms, $\Aut\H\cong\SL_2\R$ (see e.g.\ \cite[Thm.~2.4]{\sscplx}). Thus conformal equivalences between two-marked domains are not uniquely determined, so we must choose an appropriate normalization for the maps $g_t$.

From now on we write $f\lesssim g$ if $f/g$ is bounded by a (universal) constant, and $f \asymp g$ if both $f\lesssim g$ and $g \lesssim f$.

\subsubsection{Boundary behavior}

We begin with a caution that while conformal maps are as nice as possible in the interiors of domains, their good behavior does not necessarily continue up to the boundary. The results of this section may be found in \cite{\pommerenke} which has much more information on this subject.

\newtheorem*{thm:cara}{Carath\'eodory's theorem}
\begin{thm:cara}
Let $f : \D \to D$ be a conformal map. The function $f$ has a continuous extension to $\clos\D$, which restricts to a bijection of $C=\pd\D$ with $\pd D$, if and only if $\pd D$ is a Jordan curve.
\end{thm:cara}

Unfortunately, Carath\'eodory's theorem does not apply in our main case of interest, where $D$ is a ``slit'' domain of the form $\H\setminus\gam[0,t]$. We do, however, have the following result:

\begin{dfn} \label{dfn:loc.conn} The closed set $A \subseteq \C$ is called {\bf locally connected} if for all $\ep>0$ there exists $\de>0$ such that, for any two points $a,b \in A$ with $|a-b|<\de$, we can find a continuum\footnote{A nonempty compact connected subset of $\C$.} $B\subseteq A$ with $a,b \in B$ and $\diam B < \ep$.
\end{dfn}

\newtheorem*{thm:cty}{Continuity theorem}

\begin{thm:cty}
Let $f : \D \to D$ be a conformal map. The following are equivalent:
\begin{enumerate}[(i)]
\item The function $f$ has a continuous extension to $\clos\D$;
\item $\pd D$ is a curve;
\item $\pd D$ is locally connected;
\item $\C \setminus D$ is locally connected.
\end{enumerate}
\end{thm:cty}

This tells us that the maps are well-behaved at least in one direction, and in the other direction more care is needed. In particular, the driving function ``$u_t=g_t(\gam(t))$'' is not {\it a priori} well-defined.

\subsubsection{Half-plane capacity}

We define a {\bf compact $\H$-hull} to be a bounded subset $A\subset\H$ with $A$ closed in $\H$ and $\H\setminus A$ simply connected ($A$ itself is not required to be connected). The {\bf radius} of $A$, denoted $\rad A$, is the radius of the smallest closed disc centered at the origin which contains $A$. Let $\cQ$ denote the collection of compact $\H$-hulls.

We will normalize a conformal map $g_A:\H\setminus A\to\H$ by the requirement that it behave ``like the identity'' near $\infty$. The following proposition makes this precise:

\begin{ppn} \label{ppn:def.hcap}
For $A\in\cQ$, there is a unique conformal transformation $g_A : \H \setminus A \to \H$ such that $$\lim_{z \to \infty} \left[ g_A(z)-z\right]=0.$$ We say that $g_A$ has the {\bf hydrodynamic normalization}.

\begin{proof}
The inversion map $j : z \mapsto -1/z$ is an element of $\Aut\H$. $D=j(\H\setminus A) \subseteq \H$ is simply connected and contains the intersection of $\H$ with some neighborhood of 0. By the Riemann mapping theorem and Carath\'eodory's theorem, there exists a conformal map $\phi : D \to \H$ with $\phi(0)=0$, extending continuously to the boundary of $D$ near 0. By Schwarz reflection (see \cite[\S4.6.5]{\ahlfors}) $\phi$ has a power series expansion
$$\phi(z) = a_1z + a_2z^2 + a_3 z^3 + \cdots$$
around 0 with $a_1>0$ and $a_k \in \R$ for all $k$, since $\phi$ maps the real line near 0 to the real line. The map $\phi$ is determined up to composition with any element of $\Aut\H\cong\SL_2\R$ fixing $0$, and a simple calculation shows that $\phi$ is fully determined by the choice of $a_1>0$, $a_2\in\R$. Define $g_A(z)\equiv j\circ\phi\circ j(z)$; its expansion around $\infty$ is
$$g_A(z) \equiv \f{-1}{\phi(-1/z)}
= \frac{z}{a_1} + \frac{a_2}{a_1^2} + \left( \frac{a_2^2}{a_1^3} - \frac{a_3}{a_1^2} \right) \frac{1}{z} + \cdots
\equiv b_{-1} z + b_0 + \frac{b_1}{z} + \cdots.$$
The result follows by choosing $a_1=1$, $a_2=0$.
\end{proof}
\end{ppn}

\begin{dfn} \label{dfn:hcap}
If $A$ is a compact $\H$-hull, the {\bf half-plane capacity} (from $\infty$) of $A$ is
$$\hcap A = \lim_{z \to \infty} z\left[g_A(z)-z\right].$$
\end{dfn}

In the notation of the proof of Propn.~\ref{ppn:def.hcap},
$\hcap A=b_1$ and
$$g_A(z) = z + \frac{\hcap A}{z} + O\left(\frac{1}{|z|^2} \right) \ \text{ as } z \to \infty.$$
If $r>0$ then $g_{rA}(z) = r g_A(z/r)$ so $\hcap (rA) = r^2 \hcap A$, and if $x \in \R$ then $g_{A+x} = g_A(z-x)+x$ so $\hcap(A+x)=\hcap A$. If $A,B\in\cQ$ with $A\subseteq B$ then $g_B = g_{g_A(B\setminus A)} \circ g_A$, and expanding gives
\beq \label{eq:hcap.add}
\hcap B = \hcap A + \hcap (g_A(B \setminus A)).
\eeq
This suggests that the half-plane capacity is a measure of the size of $A$, but we have not yet shown that $\hcap$ is positive. The following result proves this and gives a more precise characterization of the capacity in terms of Brownian motion.

\begin{ppn} \label{ppn:hcap.bm}
Let $A\in\cQ$, $\bm_t$ a Brownian motion started at $z\in\H\setminus A$, and $\tau\equiv\tau_{\H\setminus A}$ the leaving time of $\H\setminus A$. Then
$\imag[z-g_A(z)]=\E_z (\imag \bm_\tau)$.

\begin{proof}
By the hydrodynamic normalization, $\phi(z) = \imag [z-g_A(z)]$ is a bounded harmonic function on $\H \setminus A$, and can be extended continuously to the boundary by setting $\phi(z)=\imag z$ for $z\in\pd(\H\setminus A)$. Therefore $\phi(\bm_{t\wedge\tau})$ is a martingale, and the optional sampling theorem gives
$$\phi(z) = \E_z \phi(\bm_\tau)
= \E_z [\imag \bm_\tau],$$
proving the result.
\end{proof}
\end{ppn}

An immediate consequence is that
\beq \label{eq:hcap.lim.y}
\hcap A
= \lim_{y\to\infty} y \E_{iy}[\imag W_\tau],
\eeq
which proves that $\hcap$ is positive. For example, $g_{\clos\D\cap\H}(z) = z+z^{-1}$ so $\hcap(\clos\D\cap\H)=1$, implying the general bound $\hcap A\le(\rad A)^2$. For more geometric interpretations of $\hcap$ see \cite{\llnhcap}.

We now state two results which will be used in the LDE derivation; the proofs are deferred to \S\ref{sec:lde.sle.cap}. The first result is a uniform estimate on higher-order remainder terms in the Laurent expansion of $g_A(z)$:

\begin{ppn} \label{ppn:hcap.err}
There exists a constant $c<\infty$ such that
\beq\label{eq:hcap.err}
\left|g_A(z)-z-\frac{\hcap A}{z}\right| \le c \frac{(\rad A) (\hcap A)}{|z|^2}
\eeq
for all $|z| \ge 2 \rad A$.
\end{ppn}

The next result is a bound on the distortion of size under the maps $g_A$.

\begin{ppn} \label{ppn:lde.distort}
For $A\in\cQ$ and $B\subset\H$ such that $A\cup B\in\cQ$,
$$\diam g_A(B)
\lesssim [(\diam B) (\sup \set{\imag z:z\in B}) ]^{1/2}.$$
\end{ppn}

Assuming these results, in the next section we derive the LDE and use it to define the Schramm-Loewner evolutions.

\subsection{The Loewner differential equation and $\sle$}
\label{sec:lde.sle}

Returning to the original problem, let $\gam$ be a simple curve traveling from $0$ to $\infty$ in $\H$. Let
$$g_t\equiv g_{\gam(0,t]},\quad b(t)\equiv \hcap\gam(0,t].$$
Let $g_{s,t} \equiv g_{g_s(\gam(s,t])}$ so that $g_t = g_{s,t} \circ g_s$. The following is a consequence of Propn.~\ref{ppn:lde.distort}:

\begin{cor} \label{cor:lde.driving}
The map $g_t$ can be extended continuously to $u_t\equiv g_t(\gam(t))$ which is real-valued and continuous in $t$ with $u_0=0$.

\begin{proof}
Note that
\beq\label{eq:lde.supnorm}
\|g_s-g_t\|_\infty
= \sup_z \abs{ g_{s,t}(z)-z }
\lesssim \diam g_s(\gam(s,t]),
\eeq
and Propn.~\ref{ppn:lde.distort} implies
\beq \label{eq:lde.diam.est}
\diam g_s(\gam(s,t])
\lesssim [ (\diam \gam[s,t]) (\diam \gam[0,t]) ]^{1/2}
\eeq
Let $w_t\equiv \lim_{s\incto t} g_s(\gam(t))$; \eqref{eq:lde.supnorm} and \eqref{eq:lde.diam.est} imply that $w$ is well-defined and continuous in $t$. Given $\ep>0$, since $\gam$ is \emph{simple}, there exists $0<\de<\ep$ such that if $0\le s<t\le t_0$ and $\abs{\gam(s)-\gam(t)}<\de$, then $t-s<\ep$ and $\diam\gam[s,t]<\ep$. Let $B=\ball[\gam(t)]{\de}\cap (\H\setminus\gam[0,t])$: for any $z\in B$, there exists a simple curve $\tilde\gam$ in $B$ of diameter $<\de$ joining $z$ to some $\gam(t')\in\ball[\gam(t)]{\de}$ with $t'<t$, hence $t-t'<\ep$ and $\diam\gam[t',t]<\ep$. Then
\begin{align*}
|g_t(z)-w_t|
&\le |g_t(z)-g_{t'}(z)| + |g_{t'}(z)-g_{t'}(\gam(t))| + |g_{t'}(\gam(t))-w_t|\\
&\le \nrm{g_t-g_{t'}}_\infty + \diam g_{t'} (\tilde\gam)
    + \limsup_{s\incto t} \nrm{g_{t'}-g_s}_\infty,
\end{align*}
which by \eqref{eq:lde.supnorm} and \eqref{eq:lde.diam.est} can be made arbitrarily small by taking $\ep\decto0$. Therefore $u_t=w_t$ is continuous in $t$.
\end{proof}
\end{cor}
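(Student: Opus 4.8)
The plan is to produce the candidate value $u_t$ as a limit of interior values of the earlier slit maps, then upgrade this to a genuine continuous extension of $g_t$ to the tip $\gam(t)$ using the distortion bound of Propn.~\ref{ppn:lde.distort}, and finally read off reality of $u_t$ and the value $u_0=0$.

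First I would set up the basic estimate. For $s<t$ put $g_{s,t}\equiv g_{g_s(\gam(s,t])}$, so that $g_t=g_{s,t}\circ g_s$ and $\nrm{g_s-g_t}_\infty=\nrm{g_{s,t}-\mathrm{id}}_\infty$; the Laurent expansion of a hull map at $\infty$ (Propn.~\ref{ppn:hcap.err}, with $\hcap A\le(\rad A)^2$) gives $\nrm{g_A-\mathrm{id}}_\infty\lesssim\rad A$, and Propn.~\ref{ppn:lde.distort} with $A=\gam(0,s]$, $B=\gam(s,t]$ --- using $\sup\set{\imag z:z\in\gam(s,t]}\le\diam\gam[0,t]$ since $\gam(0)=0\in\R$ --- yields
$$\nrm{g_s-g_t}_\infty\lesssim\bigl[(\diam\gam[s,t])(\diam\gam[0,t])\bigr]^{1/2}.$$
By continuity of $\gam$ the right side tends to $0$ as $s$ and $t$ merge, so $s\mapsto g_s(\gam(t))$ is uniformly Cauchy as $s\incto t$; set $w_t\equiv\lim_{s\incto t}g_s(\gam(t))$. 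The same estimate with $t$ varied shows $t\mapsto w_t$ is continuous, and (taking $s=0$) $\nrm{g_t-\mathrm{id}}_\infty\lesssim\diam\gam[0,t]$, so $w_t$ is a bounded limit; finally $g_0=\mathrm{id}$ and $\gam(0)=0$ give $w_0=0$.

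Next, the crux: show $g_t(z)\to w_t$ as $z\to\gam(t)$ inside $\H\setminus\gam(0,t]$, so that $g_t$ extends continuously to $\gam(t)$ with $u_t\equiv g_t(\gam(t))=w_t$. Here \emph{simplicity} of $\gam$ is indispensable: restricting to $[0,t_0]$ and using uniform continuity, for each $\ep>0$ there is $\de\in(0,\ep)$ such that $\abs{\gam(s)-\gam(t)}<\de$ with $s<t\le t_0$ forces $t-s<\ep$ and $\diam\gam[s,t]<\ep$ (a self-crossing curve could place $\gam(s)$ geometrically near $\gam(t)$ with $s$ far from $t$, and the argument would collapse). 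For $z$ within $\de$ of $\gam(t)$ in the slit domain, join $z$ to some $\gam(t')$ with $t'<t$, $t-t'<\ep$, by a short arc $\tilde\gam$ in $\ball[\gam(t)]{\de}$, and bound
$$\abs{g_t(z)-w_t}\le\nrm{g_t-g_{t'}}_\infty+\diam g_{t'}(\tilde\gam)+\limsup_{s\incto t}\nrm{g_{t'}-g_s}_\infty,$$
where the two outer terms are small by the previous paragraph and the middle term is small by Propn.~\ref{ppn:lde.distort} (the arc $\tilde\gam$ has diameter $<\de$ and imaginary parts bounded near $\gam(t)$). Letting $\ep\decto0$ closes this step, and continuity of $t\mapsto u_t$ and $u_0=0$ follow. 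Reality is then immediate: $u_t=w_t$ is a bounded limit, hence lies in $\clos\H\setminus\set{\infty}=\H\cup\R$; it cannot lie in the open half-plane, for then applying the homeomorphism $g_t^{-1}:\H\to\H\setminus\gam(0,t]$ to a sequence $g_t(z_n)\to w_t$ with $z_n\to\gam(t)$ would force $z_n\to g_t^{-1}(w_t)\in\H\setminus\gam(0,t]$, contradicting $z_n\to\gam(t)$.

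The main obstacle is the middle step: converting the capacity and distortion estimates into honest boundary continuity at the slit tip. Simplicity of $\gam$ is what makes it work --- without it one gets only an extension to prime ends and the identification $u_t=w_t$ fails --- and one must keep the imaginary-part factors in Propn.~\ref{ppn:lde.distort} under control near $\gam(t)$; should the detailed boundary behaviour of $g_t^{-1}$ be needed, it is supplied by the Continuity theorem, since $\C\setminus(\H\setminus\gam(0,t])$, the closed lower half-plane with the simple arc $\gam[0,t]$ attached at the origin, is locally connected. Everything else is routine given the machinery already in place.
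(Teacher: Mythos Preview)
Your proposal is correct and follows essentially the same route as the paper: define $w_t=\lim_{s\incto t}g_s(\gam(t))$ via the sup-norm estimate $\nrm{g_s-g_t}_\infty\lesssim\diam g_s(\gam(s,t])$ combined with Propn.~\ref{ppn:lde.distort}, then use simplicity of $\gam$ and the same three-term triangle inequality to upgrade this to a continuous extension of $g_t$ at the tip. Your additions --- the partial justification of the sup-norm bound via Propn.~\ref{ppn:hcap.err} (note this only directly handles $\abs{z}\ge 2\rad A$; the near-hull region needs a short maximum-principle argument), the explicit reality argument, and the verification $u_0=0$ --- are welcome elaborations but do not alter the structure.
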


By \eqref{eq:hcap.add} and \eqref{eq:lde.diam.est}, the set $A=g_s(\gam(s,t])-u_s$ has half-plane capacity $b(t)-b(s)$ and radius $r_{s,t}$ which both approach zero as $t-s\decto0$. By reparametrizing, suppose $b$ is continuously differentiable. Propn.~\ref{ppn:hcap.err} gives
\begin{align*}
&g_t(z)-u_s
=g_{g_s(\gam(s,t])-u_s}(g_s(z)-u_s) \\
&= [g_s(z)-u_s] + \f{b(t)-b(s)}{g_s(z)-u_s}
    + O\lp \f{r_{s,t} [ b(t)-b(s)] }{|g_s(z)-u_s|^2}\rp,
\end{align*}
where the final term tends to zero faster than $t-s$ as $t-s\decto0$. Rearranging and taking $t-s\decto0$ gives
$$\dot g_t(z) = \frac{\dot b(t)}{g_t(z)-u_t}, \quad g_0(z)=z.$$
The standard parametrization\footnote{For a discussion of parametrization of $\sle$ see \cite{\lawlersheffield}.} then sets $b(t)=2t$, so that
\beq \label{eq:chordal.lde}
\dot g_t(z) = \frac{2}{g_t(z)-u_t}, \quad g_0(z)=z.
\eeq
This is the {\bf (chordal) Loewner differential equation} (LDE) with {\bf driving function} $u_t$.

In fact, the only place in the derivation where we used that $\gam$ was simple was in the proof of Cor.~\ref{cor:lde.driving}. In fact, using Propn.~\ref{ppn:lde.distort} one can see that the LDE applies to any {\bf continuously increasing hull process}: a strictly increasing process $K_t\in\cQ$ with $b(t)=\hcap K_t$ continuously differentiable, such that $g_t(K_{t+\de})$ decreases to a single point $u_t\in\R$ as $\de\decto0$ and $u_t$ is continuous in $t$. Fig.~\ref{fig:lde.fld} shows the vector field $2/[g_\D(z)-2]$ (corresponding to $K_t=\D$ and $u_t=2$).

\begin{figure}
\centering
\caption{Vector field $2/[g_\D(z)-2]$ around $\D$}
\label{fig:lde.fld}
\end{figure}

Conversely, given a continuous function $u_t:[0,\infty)\to\R$ with $u_0=0$, the LDE with driving function $u_t$ is simply an ODE, so we can solve this ODE to recover the increasing hull process: $g_t(z)$ is well-defined up to the first time $T_z$ that $\lim_{t\incto T_z} [g_t(z)-u_t]=0$. Consequently $K_t = \set{z\in\H:T_z\le t}$, the set of points ``swallowed'' by time $t$ --- note that multiple points can be swallowed at once.

\begin{dfn} \label{dfn:self.avoiding}
Let $\gam$ be any curve (not necessarily simple) traveling from $0$ to $\infty$ in $\clos\H$. The {\bf filling} of $\gam[0,t]$ is the set $K_t\in\cQ$ of all $z\in\H$ not belonging to the unbounded component of $\H\setminus\gam[0,t]$. We call $K_t$ the hull process {\bf generated} by $\gam$, and say that $\gam$ is {\bf self-avoiding} if $K_t$ is continuously increasing.
\end{dfn}

By abuse of notation we write $\hcap\gam[0,t]\equiv\hcap K_t$: if $\gam$ is self-avoiding then it has a parametrization with $\hcap\gam[0,t]=2t$, in which case we say $\gam$ is ``parametrized by its half-plane capacity.'' If a curve travels for a non-trivial time interval into its past hull or along the domain boundary, its half-plane capacity remains constant during this time so it is not parametrized by $\hcap$. There are also curves which may be parametrized by $\hcap$ but which do not generate a continuously increasing hull process, specifically curves with transversal self-crossings.

If the (hypothetical) random curve $\gam$ is to satisfy the $\sle$ axioms, its driving function must be a continuous process of stationary independent increments. By L\'evy's characterization of Brownian motion this implies $u_t= \mu t + \sqrt{\ka} \bm_t$, for $\bm_t$ a standard Brownian motion. If further the model has left-right symmetry then $\mu=0$.

\begin{dfn}
The {\bf Schramm-Loewner evolution} on $\H_2$ with parameter $\ka$ is the continuously increasing hull process given by the Loewner evolution \eqref{eq:chordal.lde} with random driving function $u_t=\sqrt{\ka}\bm_t$.
\end{dfn}

The $\sle_\ka$ on an arbitrary two-marked domain $D_2$ is the image of $\sle_\ka$ on $\H_2$ under a conformal equivalence $f : \H_2\to D_2$ (and hence is defined only up to a linear time change). More generally, the $\sle_{\ka,\rho}$ processes are generated by Brownian motion with drift; see \cite{\lswrestr,\dubedatdual,\schrammwilson}.

Unfortunately, there are continuously increasing hull process $K_t$ not generated by any curve (see e.g.\ \cite[\S4.4]{\lawler}). That the hull process corresponding to $u_t=\sqrt{\ka}\bm_t$ \emph{is} almost surely generated by a curve was proved for $\ka=8$ in \cite{\lswlerw} and for $\ka\ne8$ in \cite{\rohdeschramm} (see \cite[Ch.~7]{\lawler}). We do not need this fact because we will show that the percolation exploration path converges in law to a conformally invariant random curve satisfying a property which uniquely characterizes $\sle_6$ among the $\sle_\ka$ (hull) processes. This implies {\it a fortiori} that $\sle_6$ is generated by a curve.

\subsection{\texorpdfstring{Characterization of $\sle_6$}{Characterization of SLE(6)}}
\label{ssec:sle.six}

We now present the characterization of $\sle_6$ used by Camia and Newman in identifying the scaling limit of the exploration path.

First, the following result, noted by Schramm in \cite[\S1]{\schramm} and proved in \cite{\lswintersecI}, indicates why $\sle_6$ is the natural candidate. Let $N = \H\cap N_0$ for $N_0$ an open neighborhood of $0$, and let $\Phi$ be a locally real conformal map of $N$ into $\H$ (that is, in some neighborhood of $0$, $\Phi$ has a power series expansion with real coefficients). Consider running an $\sle_\ka$ process $\gam$ until the first time $\tau$ that it leaves $N$: the process is said to have the {\bf locality} property if $\Phi\gam$ (before time $\tau$) again has the law of $\sle_\ka$ up to a time change.

\begin{thm}[Lawler, Schramm, Werner \cite{\lswintersecI,\lswrestr}] \label{thm:local}
The only $\sle_\ka$ with the locality property is $\sle_6$.

\begin{proof}
We follow the proof of \cite{\lawler}. For $\gam$ an $\sle_\ka$ process, let $K_t$ denote the filling of $\gam[0,t]$, and let $N^*=\Phi N$, $\gam^*=\Phi\gam$, $K_t^*=\Phi K_t$, $b^*(t)=\hcap\gam^*[0,t]$. Let $g_t^*$ denote the conformal map $\H\setminus K_t^*\to\H$ with the hydrodynamic normalization, and let $\Phi_t \equiv g_t^* \circ \Phi \circ g_t^{-1}$, so that
$$\dot g_t^*(z) = \f{\dot b^*(t)}{g_t^*(z)-u_t^*},\quad g_0^*(z)=z,$$
with $u_t^*=\Phi_t(u_t)$.

We leave it to the reader to verify that $\dot b^*(t) = 2 \Phi_t'(u_t)^2$. Therefore, we can easily calculate that for $z\in\H$,
$$\dot \Phi_t(z)
= 2\left[ \f{\Phi_t'(u_t)^2}{\Phi_t(z)-u_t^*} - \f{\Phi_t'(z)}{z-u_t} \right].$$
Letting $z\to u_t$, we find $\dot \Phi_t(u_t) = -3\Phi''(u_t)$. Thus, by It\=o's lemma,
\begin{align*}
du_t^* &= \dot \Phi_t(u_t) \ dt + \Phi_t'(u_t) \ du_t + \f{1}{2} \Phi_t''(u_t) \ka \ dt \\
&= \left[ -3+\f{\ka}{2} \right] \Phi_t''(u_t) \ dt + \Phi_t'(u_t) du_t,
\end{align*}
which is a martingale if and only if $\ka=6$. In this case it follows that $\gam^*$ is again an $\sle_6$ process.
\end{proof}
\end{thm}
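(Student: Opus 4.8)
The plan is to follow the driving function of $\sle_\ka$ through the locality map and to show that the transformed process is again a (time-changed) speed-$\ka$ Brownian motion exactly when $\ka=6$. Fix an $\sle_\ka$ process $\gam$ with filling $K_t$, and set $\gam^*=\Phi\gam$, $K_t^*=\Phi K_t$, and $b^*(t)=\hcap\gam^*[0,t]$; let $g_t^*:\H\setminus K_t^*\to\H$ be the hydrodynamically normalized map and $\Phi_t\equiv g_t^*\circ\Phi\circ g_t^{-1}$, a locally real conformal map carrying $u_t$ to $u_t^*\equiv\Phi_t(u_t)$. The first step is to identify the induced time change: using additivity of half-plane capacity \eqref{eq:hcap.add} together with the remainder and distortion estimates (Propns.~\ref{ppn:hcap.err} and~\ref{ppn:lde.distort}), one shows $b^*$ is continuously differentiable with $\dot b^*(t)=2\Phi_t'(u_t)^2$, so that $g_t^*$ solves a Loewner equation with time-dependent speed $\dot b^*(t)$ and driving function $u_t^*$.

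Next I would differentiate the identity $\Phi_t=g_t^*\circ\Phi\circ g_t^{-1}$ in $t$, substituting the chordal LDE \eqref{eq:chordal.lde} for $g_t$ and the analogous (variable-speed) equation for $g_t^*$, to obtain for interior $z$ an expression of the form
$$\dot\Phi_t(z)=2\left[\frac{\Phi_t'(u_t)^2}{\Phi_t(z)-u_t^*}-\frac{\Phi_t'(z)}{z-u_t}\right].$$
As $z\to u_t$ the two terms blow up individually, so the key local computation is to expand numerator and denominator of each term to second order about $u_t$: the singular parts cancel and one is left with the finite limit $\dot\Phi_t(u_t)=-3\Phi_t''(u_t)$. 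The coefficient $-3=-6/2$ is precisely where $\ka=6$ will enter.

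The last step is It\=o's formula applied to $u_t^*=\Phi_t(u_t)$ with $u_t=\sqrt{\ka}\bm_t$: the drift term is $(\dot\Phi_t(u_t)+\f{1}{2}\ka\Phi_t''(u_t))\,dt=(\f{\ka}{2}-3)\Phi_t''(u_t)\,dt$, which vanishes identically (for every admissible $\Phi$) if and only if $\ka=6$. When $\ka=6$, $u_t^*$ is a continuous local martingale with $d[u^*]_t=\ka\Phi_t'(u_t)^2\,dt=3\dot b^*(t)\,dt$, so reparametrizing to the standard half-plane-capacity clock of $\gam^*$ turns $u^*$ into $\sqrt{6}$ times a Brownian motion, and L\'evy's characterization (equivalently Dubins--Schwarz) identifies $\gam^*$ as $\sle_6$; when $\ka\ne6$ one can choose $\Phi$ with $\Phi_t''(u_t)\ne0$ on a positive-probability set, the drift is nonzero, and $u^*$ cannot be a time-changed driftless Brownian motion of the required speed, so locality fails. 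I expect the main obstacle to be the first step — establishing $\dot b^*(t)=2\Phi_t'(u_t)^2$ and justifying the term-by-term differentiation of $\Phi_t$ — since the remaining singular-limit computation, while delicate, is a self-contained Taylor expansion.
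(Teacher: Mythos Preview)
Your proposal is correct and follows essentially the same route as the paper: set up $\Phi_t=g_t^*\circ\Phi\circ g_t^{-1}$, compute $\dot b^*(t)=2\Phi_t'(u_t)^2$, differentiate to obtain $\dot\Phi_t(z)$, take the limit $z\to u_t$ to get $\dot\Phi_t(u_t)=-3\Phi_t''(u_t)$, and apply It\=o's formula so that the drift vanishes iff $\ka=6$. If anything you are slightly more explicit than the paper, which leaves the capacity derivative to the reader and does not spell out the quadratic-variation check or the failure argument for $\ka\ne6$.
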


Any scaling limit of the percolation exploration path is certainly expected to have the locality property, so (heuristically) Thm.~\ref{thm:local} already distinguishes $\sle_6$ as the only possible candidate for the scaling limit of the exploration path.

To rigorously identify the scaling limit with $\sle_6$ we take a slightly different approach: recall \S\ref{sec:bm.harm} where we noted that from the conformal invariance of the Brownian hitting distribution, one can deduce the conformal invariance of the entire Brownian trajectory (up to reparametrization) by taking polygonal approximations. We now apply this idea to give a characterization of $\sle_6$ which we will then verify in the percolation scaling limit.

By conformal invariance it suffices to characterize $\sle_6$ in $\H_2$. For $\gam$ any self-avoiding curve traveling chordally in $\H_2$ and parametrized by $\hcap$, define
$$\tau(\gam,\ep) \equiv\inf\set{t\ge0 : \gam(t)\notin\ball{\ep}};$$
note $\tau(\gam,\ep)\le\ep/2$. Let $\tau_0=0$, and for $j\ge1$ let
$$\tau_j = \tau_{j-1} + \tau(\bar g_{\tau_{j-1}} \gam,\ep),\quad
\bar g_t(z) \equiv g_t(z)-u_t.$$
Define the polygonal approximation $\gam_\ep$ by setting $\gam_\ep(\tau_j)=\gam(\tau_j)$ and interpolating linearly in between. Since $\gam$ is continuous on $[0,\infty]$ it is uniformly continuous, and since $\tau_j-\tau_{j-1}\le\ep/2$ for all $j$ it follows that $\gam_\ep\to\gam$ uniformly as $\ep\decto0$. Thus a deterministic $\gam$ is characterized by its polygonal approximations $\gam_\ep$, and a random $\gam$ is characterized by the laws of these approximations.

For fixed $\ep>0$, call $\wt K_j \equiv \bar g_{\tau_{j-1}} K_{\tau_j}$ ($j\ge1$) the {\bf $\ep$-filling sequence} for $\gam$. Then $\bar g_{\tau_{j-1}}\gam_\ep(\tau_j)$ is the unique point of intersection of $\wt K_j$ with $\circl{\ep}$, and by composition of conformal mappings $\gam_\ep$ is determined by the $\ep$-filling sequence. But if $\gam$ is a random $\sle$ curve, the domain Markov property implies that its $\ep$-filling sequence is \emph{i.i.d.}, characterized by the law of $\wt K_1$.

$\sle_6$ is the unique $\sle_\ka$ for which this law is given by Cardy's formula, another fact noted in \cite[\S1]{\schramm} and proved in \cite{\lswintersecI}. We will formulate this result as follows: in a 2-marked domain $D_2\equiv(D;a,b)$ let $J$ be a simple curve joining $c\in\ol{ab}$, $d\in\ol{ba}$. We refer to $J$ as a {\bf crosscut}; it separates $D$ into subdomains $D_0$ (incident to $a$) and $D_\infty$ (incident to $b$), and we consider running a self-avoiding curve from $a$ until it exits $D_0$, resulting in a hull $K(D_2,J)$. Let $\al_c$ be a simple curve traveling from
$\al_{c0}\in\ol{ac}$ to $\al_{c1}\in J$,
and let $\al_d$ be a simple curve traveling from
$\al_{d0}\in\ol{da}$ to $\al_{d1}\in J$,
with $\al_{c}\cap\al_{d}=\emptyset$. Let $D_0^\al$ denote the connected component of $D_0\setminus(\al_{c}\cup\al_{d})$ between $\al_c$ and $\al_d$. (See Fig.~\ref{fig:crosscut}.) Specifying the law of the hull $K$ is equivalent to specifying the law of its boundary, regarded as an element of $\crv{D}$. Excluding events of probability zero, the $\si$-algebra is generated by the events
$$E_\al\equiv \set{K : K\subseteq \clos{D_0^\al}}.$$

\begin{figure}
\centering
\includegraphics[height=0.56\textwidth]{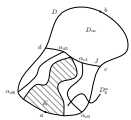}
\caption{Crosscut and hull distribution}
\label{fig:crosscut}
\end{figure}

\begin{thm}[Lawler, Schramm, Werner {\cite[Thm.~3.2]{\lswintersecI}}]
For the setting described above, $\sle_6$ is the unique $\sle_\ka$ for which the law of $K(D_2,J)$ is determined by Cardy's formula:
\beq \label{eq:cardy.hull}
\P^{\sle_6}(E_\al) = \Phi( D_0^\al;a,\al_{c0},\al_{c1},\al_{d0} )
    -\Phi( D_0^\al;a,\al_{c0},\al_{c1},\al_{d0} ).
\eeq
\end{thm}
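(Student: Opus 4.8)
The plan is to prove \eqref{eq:cardy.hull} for $\ka=6$ by a direct computation with the chordal Loewner equation, after first using the locality property to reduce the hull $K(D_2,J)$ to a residual $\sle_6$ in a canonical domain; uniqueness will then follow from Thm.~\ref{thm:local}. To begin, note that both sides of \eqref{eq:cardy.hull} are conformal invariants of the marked data $(D;a,b,J,\al_c,\al_d)$ --- the left side because $\sle_\ka$ on a general two-marked domain is by definition the image of $\sle_\ka$ on $\H_2$, the right side because each term $\Phi(\cdot)$ is a conformal invariant of a four-marked Jordan domain by Thm.~\ref{thm:smirnov} --- so it suffices to verify the identity for one convenient representative of the configuration.

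Next I would use locality. The hull $K(D_2,J)$ is generated only by the initial segment of the curve, stopped at its exit from $D_0$, and $D_0^\al$ is a neighbourhood of the starting point $a$ inside $D_0$. By Thm.~\ref{thm:local} and the target-independence it entails, the law of that initial segment up to exit from $D_0^\al$ --- equivalently, the law of $K\cap\clos{D_0^\al}$ restricted to the event $E_\al$ --- depends only on the four-marked domain $(D_0^\al;a,\al_{c0},\al_{c1},\al_{d0})$, and not on $b$, on $J$, or on $D$ outside $D_0^\al$. Tracing through how the crosscuts $\al_c,\al_d$ cut off the filling, $E_\al$ is exactly the event that this residual $\sle_6$ curve first reaches the portion of the arc $A_3=\ol{\al_{c1}\al_{d0}}$ lying on $\pd D_0$ without crossing $A_2=\al_c$ or the subarc $\al_d\subset A_3$. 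Mapping $(D_0^\al;a,\ldots)$ conformally to $\H$ with $a\mapsto 0$ and the remaining marked points to reals $x_1<x_2<x_3$, this makes $\P^{\sle_6}(E_\al)$ a hitting probability for $\sle_6$ in $\H$ started at $0$.

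The computation itself is the crux. Running the chordal LDE \eqref{eq:chordal.lde} with $u_t=\sqrt6\,\bm_t$ and setting $X_t^i=g_t(x_i)-u_t$, each $X^i$ satisfies $dX_t^i=(2/X_t^i)\,dt-du_t$ until it is swallowed, and the event of interest is a hitting event for the triple $(X^1,X^2,X^3)$. Passing to the single cross-ratio $R_t\in(0,1)$ of $\{0,X_t^1,X_t^2,X_t^3\}$ (the conformal modulus of the residual four-marked domain) and applying It\^o's formula, $R_t$ becomes, after the time change $d\si=dt/(X_t^3-X_t^1)^2$, a one-dimensional diffusion with generator $3\,\partial_r^2+\frac{2(1-2r)}{r(1-r)}\,\partial_r$; the ratio of the constant $3=\ka/2$ to this drift is where $\ka=6$ enters decisively. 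Writing $h(r)$ for the hitting probability as a function of the starting modulus, $h$ solves $3h''+\frac{2(1-2r)}{r(1-r)}h'=0$ with boundary values $0$ and $1$ at the two degenerate configurations, whence $h'(r)\propto[r(1-r)]^{-2/3}$ and $h(r)=\bigl(\int_0^r[s(1-s)]^{-2/3}\,ds\bigr)\big/\bigl(\int_0^1[s(1-s)]^{-2/3}\,ds\bigr)$. This is precisely the Schwarz--Christoffel integral mapping $\H$ onto the equilateral triangle $\De$, so by Lem.~\ref{lem:unique.triple} together with the Carleson form of Cardy's formula (which identifies $\Phi$ with the triangle-coordinate of that map) $h$ coincides with $\Phi$ on the appropriate four-marked domain.

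The right side of \eqref{eq:cardy.hull} appears as a difference of $\Phi$-values because $E_\al$ sits strictly between two crossing events --- roughly, a crossing that reaches the far arc at all, minus one that also detects the competing arc --- and running the same modulus computation for those two events produces the two terms. For uniqueness: the right side of \eqref{eq:cardy.hull} depends only on $(D_0^\al;a,\al_{c0},\al_{c1},\al_{d0})$, and the events $E_\al$ generate the relevant $\si$-algebra with $D_0^\al$ an arbitrary neighbourhood of $a$; so if some $\sle_\ka$ had the law of every hull $K(D_2,J)$ given by Cardy's formula, the law of its initial segment up to exit from any neighbourhood of $a$ would be independent of the target and of the ambient domain away from $a$, which together with conformal invariance is exactly the locality property, forcing $\ka=6$ by Thm.~\ref{thm:local}. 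I expect the main obstacle to be the geometric bookkeeping behind the locality reduction and in the endgame of the computation --- translating the event $E_\al=\{K\subseteq\clos{D_0^\al}\}$ into a clean exit event for a residual $\sle_6$, pinning down which four-marked domains and cross-ratios govern it so that the answer emerges as a difference of Cardy values with the right arguments, and matching the boundary data; once the event is set up, the It\^o computation and the ODE are routine.
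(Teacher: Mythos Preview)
The paper does not actually prove this theorem: immediately after the statement it says ``We omit the proof and refer the reader to \cite{\lswintersecI} and \cite[\S6.7]{\lawler}. The proof for the equilateral triangle (which is sufficient for our purposes) is particularly simple and may be found in \cite[\S6.8]{\lawler} or \cite[\S3.8]{\wernerparkcity}.'' So there is no in-paper argument to compare against; your sketch is being measured against those references.

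Against that benchmark your outline is on the right track and is essentially the standard route taken in \cite[\S6.7--6.8]{\lawler} and \cite[\S3.8]{\wernerparkcity}: reduce by conformal invariance and (for $\ka=6$) locality to a boundary-hitting question in $\H$, run the Loewner flow on the marked real points, pass to a cross-ratio diffusion, and read off Cardy's formula from the resulting hypergeometric ODE (equivalently, the Schwarz--Christoffel map to the equilateral triangle). The uniqueness direction via ``Cardy's formula for all $J,\al$ $\Rightarrow$ locality $\Rightarrow$ $\ka=6$ by Thm.~\ref{thm:local}'' is also the standard one. Two remarks. First, the references work directly in the equilateral triangle, where the computation collapses to a single one-dimensional diffusion and the identification with the triangle coordinate is immediate; you would save yourself most of the ``geometric bookkeeping'' you flag by doing the same rather than juggling three processes $X^1,X^2,X^3$ in $\H$. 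Second, note that the displayed formula \eqref{eq:cardy.hull} as printed is a typo (the two $\Phi$-terms have identical arguments and would cancel); your paragraph about $E_\al$ sitting between two crossing events is the right instinct, but you should pin down explicitly which two four-marked domains appear --- the event $E_\al=\{K\subseteq\clos{D_0^\al}\}$ is that the curve exits $D_0$ through the subarc of $J$ between $\al_{c1}$ and $\al_{d1}$, and expressing this as a difference of two Cardy probabilities requires getting the marked points right, which your sketch leaves vague.
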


We omit the proof and refer the reader to \cite{\lswintersecI} and \cite[\S6.7]{\lawler}. The proof for the equilateral triangle (which is sufficient for our purposes) is particularly simple and may be found in \cite[\S6.8]{\lawler} or \cite[\S3.8]{\wernerparkcity}.

The main consequence of this discussion is the following result which will be used in \S\ref{ch:explore} to identify the scaling limit of the exploration path.

\begin{ppn} \label{ppn:char.sle6}
If $\gam$ is a self-avoiding curve such that for all $\ep>0$ the $\ep$-filling sequence is i.i.d.\ with law given by Cardy's formula \eqref{eq:cardy.hull}, then $\gam$ is $\sle_6$.
\end{ppn}

\subsection{Capacity estimates for the LDE}
\label{sec:lde.sle.cap}

In this section we prove Propn.~\ref{ppn:hcap.err} and Propn.~\ref{ppn:lde.distort}, completing the derivation of the LDE. We place both results under the heading of ``capacity estimates'' because they are proved by estimating hitting probabilities of Brownian motion.

\begin{lem}
Let $D\equiv\H\setminus\clos\D$, and let $\bm_t$ be Brownian motion started in $D$ and stopped at $\tau\equiv\tau_D$. If $p(z,w)$ denotes the density of $W_\tau$ on $\pd D$ with respect to Lebesgue measure, then
\beq \label{eq:bm.sin}
p(z,e^{i\thet})
=\f{2}{\pi} \f{\imag z}{|z|^2} \sin\thet (1 + O(|z|^{-1})).
\eeq
Consequently, if $\rad A\le1$ then
\beq \label{eq:hcap.sin}
\hcap A = \f{2}{\pi} \int_0^\pi
    \E_{e^{i\thet}}[\imag W_{\tau_{\H\setminus A}}] \sin\thet\d\thet.
\eeq

\begin{proof}
Recall the Poisson integral formula for $\H$ given in \eqref{eq:pois.h}. If $f$ is a bounded continuous function on $\pd D$ with $f=0$ outside of $\pd\D$, then
$$\E_z[f(W_\tau)]
= P_D f(z) = P_\H(f\circ\vph^{-1})(\vph(z)),$$
where $\vph(z)=z+1/z$ maps $D$ conformally onto $\H$. In particular $\vph$ maps $e^{i\thet}$ to $2\cos\thet\in[-2,2]$, so
\begin{align*}
\E_z[f(W_\tau)]
&= \f{1}{\pi} \int_{-2}^2 Q_{\imag\vph(z)}(t-\real\vph(z))
    f \circ\phi^{-1}(t) \d t\\
&= \f{2}{\pi} \int_0^\pi Q_{\imag\vph(z)}(2\cos\thet-\real\vph(z))
    f(e^{i\thet}) \sin\thet\d\thet.
\end{align*}
\eqref{eq:bm.sin} then follows since $\real\vph(z) = (1+|z|^{-2})\real z$ and $\imag\vph(z)=(1-|z|^{-2})\imag z$. Next, by Propn.~\ref{ppn:hcap.bm} and the strong Markov property for Brownian motion, if $\rad A\le 1$ and $|z|>1$ then
\beq \label{eq:gA.err.sin}
\imag[z-g_A(z)]
= \f{2}{\pi} \f{\imag z}{|z|^2} \int_0^\pi
    \E_{e^{i\thet}}[\imag W_{\tau_{\H\setminus A}}]
    \sin\thet (1+O(|z|^{-2}))\d\thet.
\eeq
Taking $z=iy$ and taking $y\incto\infty$ gives \eqref{eq:hcap.sin}.
\end{proof}
\end{lem}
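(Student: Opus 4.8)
The plan is to transport the problem to the upper half-plane by an explicit conformal map, where the hitting (harmonic) measure is given by the half-plane Poisson kernel \eqref{eq:pois.h} in closed form, and then expand the resulting density for large $|z|$. The second assertion will follow by combining the first with the Brownian description of $\hcap$ already recorded in \eqref{eq:hcap.lim.y} and the strong Markov property. Concretely, I would use the Joukowski map $\vph(z)=z+1/z$, which sends $D=\H\setminus\clos\D$ conformally onto $\H$: it carries the upper unit semicircle $\{e^{i\thet}:0<\thet<\pi\}$ onto the segment $(-2,2)$ via $e^{i\thet}\mapsto 2\cos\thet$, the rays $(-\infty,-1)$ and $(1,\infty)$ onto $(-\infty,-2)$ and $(2,\infty)$, and fixes $\infty$.

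Since the exit distribution of Brownian motion is conformally invariant (Propn.~\ref{ppn:bm.dirichlet}, or Thm.~\ref{thm:bm.ci}), the law of $\bm_\tau$ under $\P_z$ is the $\vph^{-1}$-image of the exit law of Brownian motion from $\H$ started at $\vph(z)$, which by \eqref{eq:pois.h} has density $\tfrac1\pi\,Q_{\imag\vph(z)}(\,\cdot-\real\vph(z))$ against Lebesgue measure on $\R$. Restricting to test functions supported on the semicircle and substituting $t=2\cos\thet$, $\d t=-2\sin\thet\,\d\thet$, turns this into the density $p(z,e^{i\thet})=\tfrac2\pi\,Q_{\imag\vph(z)}(2\cos\thet-\real\vph(z))\sin\thet$ with respect to arc length on the semicircle. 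It then remains to expand for large $|z|$: writing $\vph(z)=x+iy$ one has $x=(1+|z|^{-2})\real z$ and $y=(1-|z|^{-2})\imag z$, so $y=\imag z\,(1+O(|z|^{-2}))$, and since $|2\cos\thet|\le 2$ while $|\vph(z)|^2=|z|^2+O(1)$, the denominator $(2\cos\thet-x)^2+y^2$ equals $|z|^2\,(1+O(|z|^{-1}))$ uniformly in $\thet$. Hence $Q_{\imag\vph(z)}(2\cos\thet-\real\vph(z))=\tfrac{\imag z}{|z|^2}(1+O(|z|^{-1}))$, which is \eqref{eq:bm.sin}.

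For \eqref{eq:hcap.sin} I would feed \eqref{eq:bm.sin} into \eqref{eq:hcap.lim.y}. For $\rad A\le1$ and $|z|>1$, condition on the first visit of $\bm$ (under $\P_z$) to $\pd D=\{e^{i\thet}:0\le\thet\le\pi\}\cup(\R\setminus(-1,1))$: a first visit to $\R\setminus(-1,1)$ already exits $\H\setminus A$ at a real point and contributes nothing to $\E_z[\imag\bm_{\tau_{\H\setminus A}}]$, while a first visit to the semicircle at $e^{i\thet}$ restarts the expectation there by the strong Markov property, so $\E_z[\imag\bm_{\tau_{\H\setminus A}}]=\int_0^\pi p(z,e^{i\thet})\,\E_{e^{i\thet}}[\imag\bm_{\tau_{\H\setminus A}}]\,\d\thet$. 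Since $\imag\bm_{\tau_{\H\setminus A}}\le\rad A\le1$ the integrand is bounded, so taking $z=iy$, inserting \eqref{eq:bm.sin}, multiplying by $y$ and letting $y\to\infty$ gives, by dominated convergence together with \eqref{eq:hcap.lim.y}, the claimed identity $\hcap A=\tfrac2\pi\int_0^\pi\E_{e^{i\thet}}[\imag\bm_{\tau_{\H\setminus A}}]\sin\thet\,\d\thet$.

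The conceptual content is just conformal invariance of harmonic measure plus the explicit half-plane Poisson kernel; the only step needing care is making the error term in \eqref{eq:bm.sin} genuinely uniform in $\thet$ and valid down to $|z|$ of order $1$ rather than merely for $|z|$ large, since \eqref{eq:bm.sin} gets applied in \eqref{eq:gA.err.sin} for all $|z|>1$. For this one checks that the $O(|z|^{-1})$ arises solely from the ratios $|\vph(z)|^2/|z|^2$ and $\imag\vph(z)/\imag z$, both controlled uniformly in $\arg z$ once $|z|$ is bounded away from $0$.
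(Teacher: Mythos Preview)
Your proposal is correct and follows essentially the same route as the paper: the Joukowski map $\vph(z)=z+1/z$ to transport to $\H$, the explicit Poisson kernel $Q_y$, the change of variable $t=2\cos\thet$, and then the strong Markov property at the first hit of $\pd D$ combined with the Brownian formula for $\hcap$. The paper phrases the second step via Propn.~\ref{ppn:hcap.bm} (so that the intermediate identity \eqref{eq:gA.err.sin} for $\imag[z-g_A(z)]$ is recorded along the way, to be reused in the proof of Propn.~\ref{ppn:hcap.err}), whereas you invoke \eqref{eq:hcap.lim.y} directly; these are equivalent, and you correctly flag in your final paragraph that the uniform-in-$\thet$ error bound is what makes \eqref{eq:gA.err.sin} work for all $|z|>1$.
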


\begin{proof}[Proof of Propn.~\ref{ppn:hcap.err}]
By scaling we may assume $\rad A = 1$. For $z\in\H\setminus A$, let $h(z)\equiv g_A(z)-z-\hcap A/z$ and consider
$$v(z)\equiv \imag h(z)
= \imag [g_A(z)-z] + \hcap A  \frac{\imag z}{|z|^2}.$$
By \eqref{eq:hcap.sin} and \eqref{eq:gA.err.sin}, there exists a constant $c<\infty$ such that
$$|v(z)|
\le c \f{\imag z}{|z|^3} \hcap A.$$
On the other hand, if $\clos{\ball[z]{r}}\subset\H\setminus A$, the Poisson integral formula \eqref{eq:pois.d} for the disc gives
$$v(w)
= \f{1}{2\pi}\int_0^{2\pi}
    \real\lp\f{R+(w-z)}{R-(w-z)} \rp v(z+Re^{i\vph})\d\vph,
\quad w\in\ball[z]{r},$$
and differentiating we find that for $z$ sufficiently far away from $A$ (say $|z|\ge2$),
$$\absb{\frac{\pd v}{\pd x}(z)}, \
\absb{\frac{\pd v}{\pd y}(z)} \le c \frac{\hcap A}{|z|^3}.$$
It follows by the Cauchy-Riemann equations that $|h'(z)|\le c(\hcap A)/|z|^3$, and since $\lim_{r \to \infty} h(re^{i\thet}) = 0$ we can integrate from $\infty$ to bound $h(re^{i\thet})$:
$$|h(re^{i\thet})| = \left| \int_y^\infty h'(te^{i\thet}) \ dt \right| \le c \frac{\hcap A}{r^2},$$
which concludes the proof.
\end{proof}

It remains to prove Propn.~\ref{ppn:lde.distort}, which is done by relating the size of sets to hitting probabilities of Brownian motion. For $B$ a subset of $\clos\H$ and $A\in\cQ$, define the {\bf capacity of $B$ relative to $A$}
by
$$\capacity_\H(B;A)\equiv\lim_{y\to\infty} y\P_{iy}
    (\bm[0,\tau_{\H\setminus A}] \cap B \ne\emptyset).$$
We call $\capacity_\H(B)\equiv\capacity_\H(B;\emptyset)$ the {\bf capacity} of $B$. This capacity scales as $\capacity_\H rB=r\capacity_\H B$, and has the following invariance property: for $B'\subseteq\H$, let $g_A(B')$ denote the pre-image of $B'$ under the continuation of $g_A^{-1}$ to $\clos\H$. Then:

\begin{lem} \label{lem:cap}
For $A\in\cQ$, $\capacity_\H A = \capacity_\H \wh A=\pi^{-1}\length \wh A$ where $\wh A = g_A(\clos A)$ is an interval in $\R$.

\begin{proof}
Let $\si\equiv\tau_\H$. If $\wh A$ is any interval in $\R$, the Poisson integral formula \eqref{eq:pois.h} for $\H$ gives
$$y\P_{iy}(\bm_\si\in \wh A)
= \f{1}{\pi} \int_{\wh A} \f{y^2}{(t-x)^2+y^2}\d t,$$
which tends to $\pi^{-1}\length \wh A$ as $y\to\infty$. For $A\in\cQ$, by the conformal invariance of Brownian motion,
$\capacity_\H \clos A
= \lim_{y\to\infty} y \P_{g_A(iy)}(\bm_\si \in \wh A)$. By the hydrodynamic normalization $g_A(iy) = u_A(iy)+i v_A(iy) = iy + O(y^{-1})$, therefore
$$\capacity_\H A = \lim_{y\to\infty}
    y \P_{v_A(iy)}(\bm_\si \in \wh A - u_A(iy))
= \lim_{y\to\infty} y \P_{iy}(\bm_\si\in \wh A)
= \capacity_\H \wh A,$$
which concludes the proof.
\end{proof}
\end{lem}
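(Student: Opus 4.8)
The plan is to evaluate $\capacity_\H A$ by transporting the underlying Brownian computation from the slit domain $\H\setminus A$ to $\H$ itself through the normalized map $g_A$, thereby reducing it to an explicit Poisson-kernel integral. As a preliminary I would settle the case of a bounded subset $I\subset\R=\pd\H$ of finite length: a Brownian motion from $iy$ meets $I$ before leaving $\H$ exactly when it exits $\H$ at a point of $I$, so by the Poisson formula \eqref{eq:pois.h},
$$y\,\P_{iy}(\bm[0,\tau_\H]\cap I\ne\emptyset)
=\frac1\pi\int_I\frac{y^2}{t^2+y^2}\,\d{t}
\xrightarrow[y\to\infty]{}\frac1\pi\length I$$
by dominated convergence. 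This already gives $\capacity_\H\wh A=\pi^{-1}\length\wh A$ once one knows $\wh A$ is a bounded subset of $\R$; the refinement that $\wh A=g_A(\clos A)$ is in fact an interval (a finite union of intervals for disconnected $A$) follows from the boundary behaviour of $g_A$: by Schwarz reflection $g_A$ is real on the part of $\R$ lying in $\pd(\H\setminus A)$, so $\wh A$ is the complementary set of real boundary values approached as $z\to\pd A$ inside $\H\setminus A$, which is connected because $A$ is bounded and $\H\setminus A$ simply connected.

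The substance is then the identity $\capacity_\H A=\capacity_\H\wh A$. For large $y$ the Brownian motion from $iy$ lies in $\H\setminus A$, and it hits $A$ before leaving $\H$ precisely when it exits the domain $\H\setminus A$ through the part of the boundary coming from $\pd A$ rather than through $\R$. Pushing this event forward by $g_A:\H\setminus A\to\H$, which carries that part of the boundary onto $\wh A$, conformal invariance of the Brownian hitting distribution (\S\ref{sec:bm.harm}) gives
$$\P_{iy}(\bm[0,\tau_\H]\cap A\ne\emptyset)=\P_{g_A(iy)}(\bm_{\tau_\H}\in\wh A).$$
Writing $g_A(iy)=u_A(iy)+iv_A(iy)$, the hydrodynamic normalization $g_A(z)=z+O(1/|z|)$ forces $u_A(iy)\to0$ and $v_A(iy)/y\to1$. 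Hence, translating in the real direction and applying the interval case,
$$y\,\P_{g_A(iy)}(\bm_{\tau_\H}\in\wh A)
=\frac1\pi\int_{\wh A-u_A(iy)}\frac{y\,v_A(iy)}{t^2+v_A(iy)^2}\,\d{t}
\xrightarrow[y\to\infty]{}\frac1\pi\length\wh A,$$
again by dominated convergence. Combining the two displays gives $\capacity_\H A=\pi^{-1}\length\wh A=\capacity_\H\wh A$.

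The step I expect to be the main obstacle is the transfer identity of the second paragraph: one must argue carefully that the event ``$\bm$ hits $A$ before leaving $\H$'' coincides with ``$\bm$ exits $\H\setminus A$ through $\pd A$'' (a statement about which boundary component the path first reaches, legitimate because $iy\in\H\setminus A$ for large $y$), and that conformal invariance of harmonic measure may be invoked up to the possibly irregular boundary $\pd A$. Both points are standard, but they are where the topological bookkeeping lives; the two Poisson-integral limits themselves are routine.
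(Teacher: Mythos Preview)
Your argument is correct and is essentially the same as the paper's: compute $\capacity_\H$ of a real interval via the Poisson kernel, then transfer $\capacity_\H A$ to $\capacity_\H\wh A$ by conformal invariance of the Brownian hitting distribution together with the hydrodynamic normalization $g_A(iy)=iy+O(y^{-1})$. You supply more detail (the interval structure of $\wh A$, the dominated-convergence step, and the boundary bookkeeping), but the method is identical.
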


An immediate consequence is that $\capacity_\H A \asymp \diam A$, so to prove Propn.~\ref{ppn:lde.distort} it suffices to estimate the capacity of $g_s(\gam(s,t])$. To this end, here is a simple consequence of Lem.~\ref{lem:cap}:

\begin{cor} \label{cor:cap}
Let $A\in\cQ$, $B\subset\clos\H$ such that $A'\equiv (A \cup B)\cap\H \in\cQ$. Then $\capacity_\H(B;A)=\capacity_\H g_A(B)$.

\begin{proof}
Let $\si\equiv\tau_\H$. By the conformal invariance of Brownian motion,
$$\capacity_\H(B;A)
= \lim_{y\to\infty} y\P_{ g_{A'}(iy)}
    (\bm_\si \in g_{A'}(B))
= \lim_{y\to\infty} y\P_{iy}
    (\bm_\si \in g_{A'}(B)),$$
where the second identity follows from the calculation of Lem.~\ref{lem:cap}. Since
$g_{A'} = \tilde g \circ g_A$ where $\tilde g=g_{g_A(A' \setminus A)}$, it follows similarly that
$$\capacity_\H g_A(B)
= \lim_{y\to\infty} y \P_{\tilde g(iy)}
    (\bm_\si \in \tilde g \circ g_A(B))
= \lim_{y\to\infty} y\P_{iy}
    (\bm_\si\in g_{A'}(B)),$$
which proves $\capacity_\H(B;A)=\capacity_\H g_A(B)$.
\end{proof}
\end{cor}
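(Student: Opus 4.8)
The plan is to collapse both sides of the identity onto the single limiting quantity $\lim_{y\to\infty} y\,\P_{iy}(\bm_{\tau_\H}\in g_{A'}(B))$, using the conformal invariance of planar Brownian motion (Thm.~\ref{thm:bm.ci}) applied through the hull maps $g_{A'}$ and $g_A$, together with the hydrodynamic normalization and the Poisson-integral computation already performed inside the proof of Lem.~\ref{lem:cap}.

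I would first dispose of the left-hand side. Its definition uses the stopping time $\tau_{\H\setminus A}$, whereas the natural conformal map uniformizes the complement of the \emph{larger} hull $A'=(A\cup B)\cap\H$. The bridge is the elementary identity, valid for Brownian motion started at $iy$ with $y$ large, $\tau_{\H\setminus A'} = \min\{\tau_{\H\setminus A},\,T_B\}$, where $T_B$ is the first hitting time of $B$; it follows at once from $\H\setminus A'=(\H\setminus A)\cap(\H\setminus B)$ and $\tau_{\H\setminus A}\le\tau_\H$. Taking $B$ closed one reads off the exact identity of events $\{\bm[0,\tau_{\H\setminus A}]\cap B\ne\emptyset\}=\{T_B\le\tau_{\H\setminus A}\}=\{\bm_{\tau_{\H\setminus A'}}\in B\}$. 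Now apply Thm.~\ref{thm:bm.ci} to $g_{A'}:\H\setminus A'\to\H$: the process $g_{A'}(\bm)$ run to $\tau_{\H\setminus A'}$ is a time-changed Brownian motion started at $g_{A'}(iy)$ and run to $\tau_\H$, and since $B\subseteq A'$ its boundary image $g_{A'}(B)$ lies on $\R$, so $\P_{iy}(\bm_{\tau_{\H\setminus A'}}\in B)=\P_{g_{A'}(iy)}(\bm_{\tau_\H}\in g_{A'}(B))$. Multiplying by $y$: the left side tends to $\capacity_\H(B;A)$ by definition, while for the right side I would invoke the hydrodynamic normalization $g_{A'}(iy)=iy+O(1/y)$ and the bounded-convergence fact from Lem.~\ref{lem:cap} --- that $y\,\P_{x+iy}(\bm_{\tau_\H}\in S)=\tfrac1\pi\int_S \tfrac{y^2}{(t-x)^2+y^2}\,dt\to\tfrac1\pi|S|$ for bounded measurable $S\subseteq\R$ whenever $x=O(1/y)$ --- to replace the start point $g_{A'}(iy)$ by $iy$. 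This yields $\capacity_\H(B;A)=\lim_{y\to\infty} y\,\P_{iy}(\bm_{\tau_\H}\in g_{A'}(B))$.

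Next I would run the same argument ``one level down'' for the right-hand side, writing $\capacity_\H g_A(B)=\capacity_\H(g_A(B);\emptyset)$. By the composition rule for compact $\H$-hulls, $g_{A'}=\tilde g\circ g_A$ with $\tilde g\equiv g_{g_A(A'\setminus A)}$, and since $A'\setminus A=(B\cap\H)\setminus A$ lies in the domain of $g_A$, the set $\tilde A'\equiv g_A(A'\setminus A)$ is again a compact $\H$-hull, with $g_A(B)\cap\H=\tilde A'$ up to a boundary set. The previous paragraph therefore applies verbatim with $(A,B,A')$ replaced by $(\emptyset,\,g_A(B),\,\tilde A')$, giving $\capacity_\H g_A(B)=\lim_{y\to\infty} y\,\P_{iy}(\bm_{\tau_\H}\in \tilde g(g_A(B)))$; and $\tilde g(g_A(B))=(\tilde g\circ g_A)(B)=g_{A'}(B)$ by composing the boundary extensions. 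Comparing the two limits gives $\capacity_\H(B;A)=\capacity_\H g_A(B)$.

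The analytic ingredients are all in place, so the only real work is the measure-theoretic bookkeeping. I would fix at the outset which regularity of $B$ is assumed --- closedness, so that $T_B$ is a stopping time with $\bm_{T_B}\in B$, plus enough regularity that $g_{A'}(B)$ and $\tilde g(g_A(B))$ are Lebesgue-measurable subsets of the bounded interval $g_{A'}(\clos{A'})$ --- and I would check that the continuous boundary extensions of $g_A$, $\tilde g$ and $g_{A'}$ are compatible on the Brownian exit points (which land a.s.\ in the part of the boundary where these maps are well-behaved, so the event identities transport correctly despite possible non-injectivity of the boundary extensions on slits); any coincidence events on $\pd A\cap\pd B$ are Lebesgue-null after the $y\to\infty$ limit. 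This is routine but is where the care is needed.
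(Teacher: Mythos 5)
Your proof is correct and follows essentially the same route as the paper: both capacities are collapsed onto $\lim_{y\to\infty} y\,\P_{iy}(\bm_{\tau_\H}\in g_{A'}(B))$ via conformal invariance of the exit distribution through $g_{A'}$ (resp.\ $\tilde g$ with the composition $g_{A'}=\tilde g\circ g_A$), with the base point shifted from $g_{A'}(iy)$ to $iy$ by the hydrodynamic normalization and the Poisson-kernel computation of Lem.~\ref{lem:cap}. The additional event-identity and boundary/measurability bookkeeping you spell out is just an expansion of what the paper leaves implicit in ``by the conformal invariance of Brownian motion.''
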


The intersection probabilities of Brownian motion can be estimated by the following very useful result:

\begin{thm}[Beurling estimate] \label{thm:beurling}
Suppose $0<r_1<r_2<\infty$ and $\gam:[0,1]\to\C$ is a curve from $\circl{r_1}$ to $\circl{r_2}$. Then
\begin{align*}
&\P_z(\bm[0,\tau_{\ball{r_2}}]\cap\gam=\emptyset)
\lesssim (r_1/r_2)^{1/2},\quad \abs{z}\le r_1,\\
&\P_z(\bm[0,\tau_{\C\setminus\clos{\ball{r_1}}}]\cap\gam=\emptyset)
\lesssim (r_1/r_2)^{1/2},\quad \abs{z}\ge r_2.
\end{align*}
\end{thm}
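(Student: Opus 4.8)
The plan is to reduce, by Beurling's projection theorem, to the extremal configuration in which $\gam$ is a radial segment --- for which the avoidance probability can be computed explicitly by a conformal map --- and then to deduce the exterior estimate from the interior one by applying the inversion $z\mapsto1/z$ together with the conformal invariance of Brownian motion (Thm.~\ref{thm:bm.ci}). By Brownian scaling I may assume throughout that $r_2=1$, and write $r\equiv r_1\in(0,1)$.

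For the first estimate, I would begin by replacing $\gam$ by a sub-arc so as to arrange that $\gam$ lies in the closed annulus $\set{r\le|z|\le1}$ and meets the two bounding circles only at its endpoints; then $\gam$ is disjoint from $\ball{r}$ except for a single point of $\circl{r}$, so a Brownian motion started at $z$ with $|z|\le r$ reaches $\circl{r}$ before it can meet $\gam$ or leave $\ball{1}$, and the strong Markov property reduces matters to starting points on $\circl{r}$. I would then invoke Beurling's projection theorem (in the form given in \cite[Ch.~3]{\lawler}): replacing $\gam$ by its circular projection onto a ray only increases the probability of avoiding it before exiting $\ball{1}$, so after a rotation it suffices to show $\sup_{z\in\circl{r}}\P_z\bigl(\bm[0,\tau_{\ball{1}}]\cap[r,1]=\emptyset\bigr)\lesssim r^{1/2}$.

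To estimate this quantity, note that $v(z)\equiv\P_z\bigl(\bm[0,\tau_{\ball{1}}]\cap[r,1]=\emptyset\bigr)$ is the bounded harmonic function on $\ball{1}\setminus[r,1]$ with boundary values $0$ on $[r,1]$ and $1$ on $\circl{1}$. I would compute it by mapping $\ball{1}\setminus[r,1]$ conformally onto the half-disc $\ball{1}\cap\H$: composing the M\"obius map $w\mapsto(w-r)/(1-rw)$, which fixes $\ball{1}$ and sends $[r,1]$ to $[0,1]$, with a branch of $w\mapsto\sqrt{w}$, which opens $\ball{1}\setminus[0,1]$ onto $\ball{1}\cap\H$ (carrying $[0,1]$ to the diameter $(-1,1)$ and $\circl{1}$ to the upper semicircle), yields such a map $\Psi$, and a short computation shows that $\Psi$ sends $\circl{r}$ into the $O(r^{1/2})$-neighbourhood of $(-1,1)$. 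On $\ball{1}\cap\H$ the harmonic function equal to $1$ on the upper semicircle and $0$ on $(-1,1)$ is, by odd reflection across $(-1,1)$, the restriction of $F(w)=\frac{2}{\pi}\arg\bigl((1+w)/(1-w)\bigr)$, which satisfies $|F(w)|\lesssim|w|$ near $0$; hence $v=F\circ\Psi$ is $O(r^{1/2})$ on $\circl{r}$, which after undoing the scaling gives the first estimate.

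For the exterior estimate I would apply $\iota(z)=1/z$, which carries $\circl{r_i}$ to $\circl{1/r_i}$, the domain $\C\setminus\clos{\ball{r_1}}$ (with $\infty$ adjoined) to $\ball{1/r_1}$ (with $\infty\mapsto0$), and $\gam$ to a curve $\gam'$ joining $\circl{1/r_2}$ to $\circl{1/r_1}$, while sending a point $z$ with $|z|\ge r_2$ to $|\iota(z)|\le1/r_2$. Since planar Brownian motion is neighborhood-recurrent it a.s.\ hits $\circl{r_1}$ in finite time and does not tend to $\infty$, and $0=\iota(\infty)$ is polar, so by Thm.~\ref{thm:bm.ci} the $\iota$-image of a Brownian motion from $z$ run until $\tau_{\C\setminus\clos{\ball{r_1}}}$ is a time change of a Brownian motion from $\iota(z)$ run until $\tau_{\ball{1/r_1}}$, the two paths meeting $\gam$ and $\gam'$ respectively at corresponding times; the bound then follows from the first estimate with $(r_1,r_2)$ replaced by $(1/r_2,1/r_1)$. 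The only genuinely non-elementary ingredient is Beurling's projection theorem; once that classical symmetrization result is granted the rest is a routine reduction and an explicit conformal-mapping computation, so in a survey I would cite it rather than reprove it.
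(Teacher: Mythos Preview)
The paper does not give a proof of this theorem at all; immediately after stating it, it simply says ``For a proof see \cite[\S3.8]{\lawler}.'' So there is no in-paper proof to compare against.

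Your sketch is essentially the standard argument one finds in Lawler's book: reduce via Beurling's projection theorem to the extremal radial slit, compute that case by an explicit conformal map onto the half-disc, and deduce the exterior estimate from the interior one by the inversion $z\mapsto1/z$ and conformal invariance of Brownian motion. The structure and the key steps are correct. One small point of exposition: you say $\Psi$ sends $\circl{r}$ into the $O(r^{1/2})$-neighbourhood of $(-1,1)$, but then invoke $|F(w)|\lesssim|w|$ near $0$; these two statements do not quite link up as written. In fact the M\"obius map $w\mapsto(w-r)/(1-rw)$ sends $\circl{r}$ into $\ball{2r}$ (its image is a circle through $0$ of diameter $2r/(1+r^2)$), so $\Psi(\circl{r})\subset\ball{O(r^{1/2})}$, and then your bound on $F$ near $0$ applies directly. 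Alternatively one can note that $F$ vanishes on $(-1,1)$ and is smooth away from $\pm1$, so $F(w)\lesssim\dist(w,(-1,1))$ there; either formulation closes the gap.
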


For a proof see \cite[\S3.8]{\lawler}. This estimate allows us to prove the main lemma:

\begin{proof}[Proof of Propn.~\ref{ppn:lde.distort}]
Write $d\equiv\diam\gam$ and $r\equiv\sup\set{\imag z : z\in B}$: by scaling it suffices to show $\capacity_\H g_A(B)\lesssim d^{1/2}$ when $r=1$, $d<1/2$ (say). By Cor.~\ref{cor:cap},
$$\capacity_\H g_A(B)
=\capacity_\H(B;A)
=\lim_{y\to\infty} y\P_{iy}(W[0,\tau]\cap B\ne\emptyset),\quad
    \tau\equiv\tau_{\H\setminus A}.$$
Fix $z_0\in B$. For large $y$, $\P_{iy}(\bm[0,\tau]\cap \ball[z_0]{1}\ne\emptyset)\lesssim c/y$ (e.g.\ by Lem.~\ref{lem:cap}). Once the Brownian motion enters $\ball[z_0]{1}$, the probability that it intersects $B$ before leaving $\H\setminus A$ is bounded above by the probability that it enters $\ball[z_0]{2d}$ before leaving $\H\setminus A$. This is $\lesssim d^{1/2}$ by Thm.~\ref{thm:beurling} so the result follows.
\end{proof}

The results of this section concerned the hypothetical scaling limits of random curves arising in discrete physical models such as LERW and percolation. We return in the next two sections to the proof of Thm.~\ref{thm:explore}.

\section{Scaling limits}
\label{ch:scaling}

In this section we present a result of Aizenman and Burchard \cite{\abholder} which guarantees the existence of \emph{subsequential} weak limits for the percolation exploration paths $\gam^\de$ in discrete domains $D_2^\de$ converging to a limit $D_2$. The next section presents the work of Smirnov and Camia and Newman which pins down the limit to be $\sle_6$. Throughout this section we assume that $D$ is a bounded domain in $\R^d$; without loss $\diam D\le 1$.

\subsection{Systems of random curves}

Recall Defn.~\ref{dfn:curve} of the space of curves $\cC$ with the uniform metric $\dcurve$; this is a complete separable metric space. Given a bounded domain $D$, let $\crv{D}$ denote the subspace of $\cC$ of curves traveling in the closure of $D$. To formalize the notion of a ``discrete curve,'' let $\crv{D}^\de$ denote the subspace of $\crv{D}$ of polygonal curves of step size $\de$.

\begin{dfn}
A {\bf curve configuration} is a closed subset of $\crv{D}$; it is a {\bf $\de$-curve configuration} if it is contained entirely in $\crv{D}^\de$. Denote the space of curve configurations by $\crvset{D}$ and the space of $\de$-curve configurations by $\crvset{D}^\de$.
\end{dfn}

A metric on $\crvset{D}$ is given by the Hausdorff metric induced by the metric $\dcurve$ on $\crv{D}$: for $F,F'\in\crvset{D}$, the Hausdorff distance $\dhaus(F,F')$ between them is the smallest $\ep>0$ such that each is contained in the $\ep$-neighborhood of the other (with respect to $\dcurve$). With this metric, the completeness and separability of $\crv{D}$ is passed on to $\crvset{D}$.

In this section we study {\bf (random) systems of configurations}, collections $F= (F^\de)_{0<\de\le\de_0}$ of random variables where the law $\P^\de$ of $F^\de$ is a probability measure on $\crvset{D}$ (with the Borel $\si$-algebra) with support contained in $\crvset{D}^\de$.

The aim is to specify a regularity condition, verifiable in the percolation setting, which implies precompactness of $\set{\P^\de}$ in the weak topology: this means that along any sequence $\de\to0$ we can extract a subsequence $\de_n$ with $\P^{\de_n}$ converging weakly to a limiting measure on $\crvset{D}$. In view of Thm.~\ref{thm:rsw}, it is natural to seek a regularity condition formulated in terms of crossings of annuli. Let $\ann{x}{r}{R} \equiv \ball[x]{R}\setminus\clos{\ball[x]{r}}$.

\begin{dfn} \label{eq:ker.cross}
Let $k\in\N$, $0<r<1$, $\eta>0$, $x\in \clos D$. A curve $\gam\in\crv{D}$ has a {\bf $k$-fold crossing of power $\eta$ and scale $r$} (for short, a $(k,\eta,r)$-crossing) at $x$ if the annulus
$\ann{x}{r^{1+\eta}}{r}$ is traversed by $k$ separate segments of $\gam$.
\end{dfn}

Let $E(k,\eta,r)$ denote the set of $F\in\crvset{D}$ exhibiting a $(k,\eta,r)$-crossing (i.e., such that some $\gam\in F$ has a $(k,\eta,r)$-crossing).

\begin{dfn}
Let $(F^\de)_{0<\de\le\de_0}$ be a system of configurations specified by laws $(\P^\de)_{0<\de\le\de_0}$. We say the system has {\bf uniform crossing exponents} if for all $k\in\N$ there exist constants $c_k,\lm_k<\infty$ with $\lm_k\to\infty$ such that for any $0<r<1$,
$$\P^\de[E(k,\eta,r)] \le c_k r^{-\lm_k\eta}.$$
\end{dfn}

\begin{thm}[Aizenman, Burchard \cite{\abholder}] \label{thm:ab}
Let $(F^\de)_{0<\de\le\de_0}$ be a system of configurations specified by laws $(\P^\de)_{0<\de\le\de_0}$. If the system has uniform crossing exponents, then the family $\set{\P^\de}$ is precompact.
\end{thm}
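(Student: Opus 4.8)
The plan is to deduce weak precompactness from tightness via Prokhorov's theorem: since $\crvset D$ is complete and separable, it suffices to exhibit, for each $\ep>0$, a compact set $\mathcal{S}_\ep\subseteq\crvset D$ with $\P^\de(\mathcal{S}_\ep)\ge1-\ep$ for all $\de\le\de_0$. I would take $\mathcal{S}_\ep=\{F\in\crvset D:F\subseteq\mathcal{K}_\ep\}$ for a compact set of curves $\mathcal{K}_\ep\subseteq\crv D$ to be constructed. Because $\mathcal{K}_\ep$ is compact, any $F\subseteq\mathcal{K}_\ep$ that is closed in $\crv D$ is itself compact, so $\mathcal{S}_\ep$ is exactly the space of nonempty compact subsets of $\mathcal{K}_\ep$ with the Hausdorff metric $\dhaus$, which is compact since $\mathcal{K}_\ep$ is. Thus everything reduces to constructing, for each $\ep$, a compact $\mathcal{K}_\ep$ that captures all but an $\ep$-fraction of the mass of every $\P^\de$.

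The construction of $\mathcal{K}_\ep$ has a deterministic half. Fix $m\in\N$, $\eta>0$, and a sufficiently small dyadic cutoff $r_0\in(0,1)$, and let $\mathcal{K}(m,\eta,r_0)\subseteq\crv D$ be the set of curves $\gam$ having \emph{no} $(m,\eta,r)$-crossing for any $x\in\clos D$ and any dyadic scale $r=2^{-n}\le r_0$; equivalently, every such annulus $\ann{x}{r^{1+\eta}}{r}$ is traversed by at most $m-1$ separate segments of $\gam$. I claim $\mathcal{K}(m,\eta,r_0)$ is precompact in $\crv D$. The heart of this is a counting estimate: for $\gam\in\mathcal{K}(m,\eta,r_0)$ and $\rho\le\diam D$, the least number $M_\rho(\gam)$ of intervals in a partition of the time-domain on each of which $\gam$ has diameter $\le\rho$ satisfies $M_\rho(\gam)\le C\rho^{-d}$ with $C=C(m,\eta,d,\diam D)$. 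Indeed, any sub-arc of $\gam$ of diameter $\ge\rho$ forces a crossing of an annulus of radii $\asymp\rho$ centred at a nearby point $x$ of a fixed $\rho$-net of $\clos D$ (the arc has a point within $\rho/10$ of $x$ and one at distance $\ge9\rho/10$ from $x$), hence --- once $\rho\le r_0$, with $r_0$ chosen so that $(9\rho/10)^{1+\eta}\le\rho/10$ --- an $(m,\eta,\cdot)$-crossing at $x$; as the net has $\lesssim\rho^{-d}$ points, each of which can absorb at most $m-1$ disjoint such sub-arcs, the number of pairwise disjoint diameter-$\ge\rho$ sub-arcs, and hence (by minimality of the partition) $M_\rho(\gam)$ itself, is $\le C\rho^{-d}$ for $\rho\le r_0$, while $M_\rho\le M_{r_0}$ is bounded for $\rho>r_0$. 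Feeding this into the standard reparametrization --- traverse a spatial sub-arc of diameter $\asymp\rho$ in time $\asymp\rho^d$, which is consistent precisely because there are $\lesssim\rho^{-d}$ disjoint such pieces --- shows every $\gam\in\mathcal{K}(m,\eta,r_0)$ admits a $1/d$-H\"older parametrization with constant depending only on $m,\eta,d,\diam D$, so the set is precompact by the Arzel\`a-Ascoli theorem. I then set $\mathcal{K}_\ep\equiv\clos{\mathcal{K}(m,\eta,r_0)}$, with $m$ and $r_0$ fixed in the next step.

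The probabilistic half is a union bound over dyadic scales. The event $F^\de\not\subseteq\mathcal{K}(m,\eta,r_0)$ is exactly $\bigcup_{n\,:\,2^{-n}\le r_0}\{F^\de\in E(m,\eta,2^{-n})\}$ (the quantifiers over $x\in\clos D$ and over curves in $F^\de$ are already built into $E$). Since $\lm_m\to\infty$, choose $m$ with $\lm_m>0$ and fix $\eta=1$; the uniform crossing exponents hypothesis gives
$$\P^\de\big(F^\de\not\subseteq\mathcal{K}_\ep\big)\;\le\;\sum_{n\,:\,2^{-n}\le r_0}\P^\de\big[E(m,\eta,2^{-n})\big]\;\le\;\sum_{n\,:\,2^{-n}\le r_0}c_m\,2^{-n\lm_m\eta},$$
a tail of a convergent geometric series. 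Taking $r_0=2^{-n_0}$ with $n_0$ large enough (depending on $\ep,m,\eta$ but not on $\de$, and large enough to also meet the smallness required in the deterministic half) makes this $<\ep$ for every $\de\le\de_0$. Hence $\P^\de(\mathcal{S}_\ep)\ge1-\ep$ for all $\de\le\de_0$, so $\{\P^\de\}$ is tight, and Prokhorov's theorem on the Polish space $\crvset D$ yields precompactness in the weak topology.

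I expect the main obstacle to be the deterministic Arzel\`a-Ascoli step: converting the purely local, scale-by-scale control on annulus crossings into a single uniform modulus of continuity requires both the multiscale sub-arc count sketched above and the careful inductive construction of the reparametrization, and this is the technical core of Aizenman and Burchard's argument \cite{\abholder}. By contrast, once the crossing exponents are in hand the probabilistic estimate is a routine union bound, and the reductions to tightness and then (via Prokhorov) to weak precompactness are soft.
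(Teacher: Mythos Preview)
Your architecture matches the paper's --- Prokhorov, then Arzel\`a--Ascoli via a uniform H\"older bound coming from the absence of many-fold annulus crossings --- and you correctly observe that one may skip the box-dimension refinement (the paper's Lem.~\ref{lem:boxdim.stoch.bd}) if one only wants precompactness rather than the sharper H\"older exponent. That simplification is legitimate: Propn.~\ref{ppn:tempered} together with the trivial bound $N_s\le Cs^{-d}$ already yields a uniform modulus of continuity.

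But the deterministic step has a real gap. A segment that starts within $\rho/10$ of the net point $x$ and reaches distance $9\rho/10$ crosses $\ann{x}{\rho/10}{9\rho/10}$, an annulus of \emph{constant} aspect ratio; it is not of the form $\ann{x}{r^{1+\eta}}{r}$ for any small $r$. Your condition $(9\rho/10)^{1+\eta}\le\rho/10$ makes $\ann{x}{r^{1+\eta}}{r}$ (with $r=9\rho/10$) strictly \emph{contain} $\ann{x}{\rho/10}{9\rho/10}$, and crossing the smaller annulus does not imply crossing the larger one: the segment need never enter $\ball[x]{r^{1+\eta}}$. Thus membership in $\mathcal K(m,\eta,r_0)$ does not bound the number of such segments at $x$, and the claimed inequality $M_\rho\le C\rho^{-d}$ is unproved. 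The fix is to use a net of mesh $\asymp r^{1+\eta}$ (exactly as in the paper's Propn.~\ref{ppn:tempered}, which covers the curve by balls of radius $r^{1+\eta}$); then starting points lie inside the inner ball and an $(m,\eta,r)$-crossing is genuinely forced. The net now has $\lesssim\rho^{-(1+\eta)d}$ points, so the correct bound is $M_\rho\lesssim m\,\rho^{-(1+\eta)d}$, and correspondingly the probabilistic union bound (over the same net, if one reads the hypothesis as a single-annulus estimate, which is how the paper applies it in Lem.~\ref{lem:tempered.whp}) acquires a factor $r^{-(1+\eta)d}$. Summability over dyadic scales then requires $\lm_m\eta>(1+\eta)d$, so ``choose $m$ with $\lm_m>0$'' is insufficient: you genuinely need $\lm_k\to\infty$ to pick $m$ large enough.
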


Here is the application to our problem of interest: let $F^\de$ be the percolation configuration on the discrete domain $D_2^\de$, regarded as the collection of all interface curves not including the domain boundary.

\begin{cor} \label{cor:ab}
Let $D_2^\de$ be discrete domains converging uniformly to the two-marked domain $D_2$ (with the spherical metric). If $\P^\de$ is the law of the percolation configuration $F^\de$ then the family $\set{\P^\de}$ is precompact.

\begin{proof}
Thm.~\ref{thm:rsw} implies the existence of a $1$-arm crossing exponent $\lm_1$, and the BK inequality implies the $k$-arm crossing exponent $\lm_k=k\lm_1$, so the result follows from Thm.~\ref{thm:ab}.
\end{proof}
\end{cor}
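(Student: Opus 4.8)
The plan is to reduce the corollary, via Theorem~\ref{thm:ab}, to checking that the system $(F^\de)_{0<\de\le\de_0}$ of percolation interface configurations has uniform crossing exponents, and then to establish that bound using only the two imported tools of \S\ref{ssec:fkg.bk.rsw}: the RSW annulus estimate (Theorem~\ref{thm:rsw}) and the BK inequality. Concretely, fixing $k\in\N$, $\eta>0$, and $0<r<1$, I would control $\P^\de[E(k,\eta,r)]$ --- the probability that \emph{some} interface arc in the configuration performs a $(k,\eta,r)$-crossing somewhere in $\clos D$ --- by an expression of the form $c_k r^{\lm_k\eta}$ with $\lm_k\to\infty$; in fact only a rate of growth $\lm_k$ proportional to $k$ is needed, which is what the argument produces.

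The heart of the matter is a combinatorial reduction from interface crossings to monochromatic crossings, to which BK and RSW apply directly. Suppose $k$ pairwise disjoint arcs $\sigma_1,\ldots,\sigma_k$, each a sub-arc of a percolation interface, traverse an annulus $\ann{x}{\rho}{R}$, and list them in cyclic order. Since an interface separates blue hexagons from yellow ones, immediately alongside each $\sigma_i$ --- say on its clockwise side --- runs a monochromatic hexagon path $\tau_i$ crossing the slightly shrunken annulus $\ann{x}{2\rho}{R/2}$ (the hexagon-width loss is harmless once $\de\ll\rho$). The $\tau_i$ lie in the $k$ distinct sectors cut from the annulus by the $\sigma_i$, hence are pairwise disjoint, so by pigeonhole at least $m\equiv\lceil k/2\rceil$ of them share a colour. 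Therefore the event that $k$ disjoint interface arcs traverse $\ann{x}{\rho}{R}$ is contained in the union over the two colours of the event that $\ann{x}{2\rho}{R/2}$ admits $m$ pairwise disjoint monochromatic crossings of that colour; those $m$ disjoint crossings are disjoint witnesses, so BK bounds this by $\P^\de[\text{one monochromatic crossing of }\ann{x}{2\rho}{R/2}]^{m}$, and Theorem~\ref{thm:rsw} together with blue--yellow symmetry bounds the latter by $\bigl(C(4\rho/R)^{\lm}\bigr)^{m}$, valid once $R/\rho$ and $\rho/\de$ exceed the RSW threshold. Taking $\rho=r^{1+\eta}$ and $R=r$ gives, for each fixed $x$, a bound of order $r^{\lm m\eta}$ with $\lm m=\lm\lceil k/2\rceil$ (and this could be sharpened to $\sim k\lm$ with a more careful count). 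Finally, to pass from a fixed $x$ to all of $\clos D$ I would cover $\clos D$ (of diameter $\le1$) by an $r^{1+\eta}$-net; a $(k,\eta,r)$-crossing at any $x$ forces $k$ disjoint interface crossings of a comparable annulus about some net point, so a union bound over the $\lesssim r^{-O(1)}$ net points --- absorbed by taking $k$ large --- yields the asserted estimate with $\lm_k\to\infty$. The degenerate sub-mesh regime $r^{1+\eta}\lesssim\de$ is treated separately: a polygonal curve of step $\de$ cannot exhibit such a fine crossing, so the event is empty or trivially bounded.

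The main obstacle --- indeed essentially the only step carrying real content, since Theorems~\ref{thm:rsw} and~\ref{thm:ab} and the BK inequality are all imported --- is this combinatorial reduction, and in particular arranging the flanking monochromatic paths so that they are \emph{genuinely} pairwise disjoint, so that the BK inequality legitimately applies, together with the bookkeeping that lets the union bound over the net be absorbed into the arm exponent without spoiling $\lm_k\to\infty$. Both points are routine in planar percolation but call for some care regarding the self-touchings of the interface curves and the lattice-scale corrections; the self-touchings in particular are tamed by the fact that each $F^\de$ is supported on polygonal curves of step $\de$.
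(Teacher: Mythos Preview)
Your proposal is correct and follows the same route as the paper: verify the uniform crossing exponents via RSW and BK, then invoke Thm.~\ref{thm:ab}. The paper's proof is two lines and simply asserts $\lambda_k=k\lambda_1$ without spelling out the interface-to-monochromatic reduction you carry out; your pigeonhole yielding $\lceil k/2\rceil$ same-colour arms is the honest version of that step, and since Thm.~\ref{thm:ab} only needs $\lambda_k\to\infty$, either constant suffices. One redundancy worth flagging: the union bound over centres $x$ is not your responsibility here --- it is performed inside the proof of Thm.~\ref{thm:ab} (see Lem.~\ref{lem:tempered.whp}), where the crossing-exponent hypothesis is applied annulus-by-annulus over a net; so for Cor.~\ref{cor:ab} you only need to bound the probability of $k$ interface traversals of a \emph{single fixed} annulus, and the net argument and the sub-mesh bookkeeping can be dropped.
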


Each $F^\de$ has a distinguished curve $\gam^\de$ (the exploration path), and by applying Thm.~\ref{thm:ab} again we can extract a further subsequence along which $\gam^\de$ converges weakly to a limit curve $\gam$. Consequently, to prove Thm.~\ref{thm:explore} it remains to identify any such limit $\gam$ as an $\sle_6$ curve. This will be done in the next section using the characterization of Propn.~\ref{ppn:char.sle6}.

We turn now to the proof of Thm.~\ref{thm:ab}. By Prohorov's theorem (see e.g.\ \cite{\billingsleyc}) it suffices to show that the family is tight, i.e.\ that for all $\ep>0$ there exists $\cK\subset\crvset{D}$ compact such that $\P^\de(\cK)\ge1-\ep$ for all $\de$. Compactness in $\crvset{D}$ can be characterized as follows:

\begin{lem}
If $\cK$ is a closed subset of $\crvset{D}$ such that the union of all $K\in\cK$ is contained in some $K_0\subset\crv{D}$ compact, then $\cK$ is compact.

\begin{proof}
By elementary topology, if $(S,d)$ is a compact metric space then the space $\cH(S)$ of closed subsets of $S$ taken with the Hausdorff metric $\dhaus$ is also compact (see e.g.\ \cite[p.~280]{munkres}). Under the hypothesis, $\cK$ is a closed subset of the compact space $(\cH(K_0),\dhaus)$ and hence is compact.
\end{proof}
\end{lem}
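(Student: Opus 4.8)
The plan is to reduce the claim to the elementary fact, recalled in the hint, that the hyperspace of closed subsets of a compact metric space is compact in the Hausdorff metric. Write $S \equiv K_0$, viewed as a compact subspace of $(\crv{D},\dcurve)$, and let $\cH(S)$ denote the collection of closed subsets of $S$ equipped with the Hausdorff metric $\dhaus$ induced by $\dcurve|_S$. Two preliminary observations make the reduction clean. First, the Hausdorff distance between two subsets of $S$ depends only on the metric $\dcurve$ and the two sets, so $\dhaus$ on $\cH(S)$ coincides with the restriction to $\cH(S)$ of the Hausdorff metric $\dhaus$ on $\crvset{D}$. Second, since $K_0$ is compact it is closed in $\crv{D}$, and therefore every set closed in $K_0$ is also closed in $\crv{D}$; hence $\cH(S)$ is a subset of $\crvset{D}$ carrying the same metric.

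Granting these, the first step is to verify the inclusion $\cK \subseteq \cH(S)$. Any $K \in \cK$ is by definition a closed subset of $\crv{D}$, and $K \subseteq \bigcup_{K' \in \cK} K' \subseteq K_0$; being closed in $\crv{D}$ and contained in $K_0$, the set $K$ is closed in the subspace $K_0$, i.e.\ $K \in \cH(S)$.

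The second step invokes the quoted fact (see e.g.\ \cite[p.~280]{munkres}): $(\cH(S),\dhaus)$ is a compact metric space. Since $\cK$ is closed in $\crvset{D}$ and $\cK \subseteq \cH(S) \subseteq \crvset{D}$, we have $\cK = \cK \cap \cH(S)$, so $\cK$ is closed in the subspace $\cH(S)$. A closed subset of a compact metric space is compact, so $\cK$ is compact, which proves the lemma.

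There is no substantive obstacle: the mathematical content lies entirely in the hyperspace compactness theorem, which is used as a black box. The only mildly delicate points are bookkeeping --- that the two Hausdorff metrics agree on $\cH(K_0)$, that $\cH(K_0)$ sits inside $\crvset{D}$, and that closedness of $\cK$ in $\crvset{D}$ transfers to closedness in the subspace $\cH(K_0)$. If one wishes to allow the empty configuration, one adopts the standard convention for the Hausdorff metric; since only nonempty configurations occur in the intended application, this point can be disregarded.
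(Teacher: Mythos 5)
Your proof is correct and follows essentially the same route as the paper: both arguments reduce the lemma to the compactness of the hyperspace $(\cH(K_0),\dhaus)$ and then observe that $\cK$ is a closed subset of it. The additional bookkeeping you supply (agreement of the Hausdorff metrics, the inclusion $\cH(K_0)\subseteq\crvset{D}$, transfer of closedness to the subspace) merely makes explicit what the paper leaves implicit.
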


Compactness in $\crv{D}$ is in turn characterized by the Arzel\`a-Ascoli theorem, so to prove tightness it suffices to prove an equicontinuity bound which holds with probability $\ge1-\ep$ under each $\P^\de$. Towards this end, the next section relates H\"older continuity to annuli crossings.

\subsection{H\"older, tortuosity, and dimension bounds}
\label{sec:single.curve}

The {\bf optimal H\"older exponent} of a curve $\gam$, denoted $\al(\gam)$, is the supremum of all $\al>0$ such that $\gam$ admits a parametrization which is H\"older continuous with exponent $\al$ --- i.e., such that
$$\abs{\gam(s)-\gam(t)} \le c_\al \abs{s-t}^\al
    \quad\forall s,t\in[0,1]$$
for some constant $c_\al<\infty$. Inverting this relation gives the equivalent condition
\beq \label{eq:holder.inv}
|s-t|\ge c_\al' |\gam(s)-\gam(t)|^{1/\al}
    \quad\forall s,t\in[0,1].
\eeq
That is, if two points on the curve are a certain distance apart, H\"older continuity puts a \emph{lower} bound on their time difference. Motivated by this observation, let $M_r(\gam)$ denote the minimal $n$ such that $\gam$ can be partitioned into $n$ segments of diameter $\le r$; this is a measure of the curve's ``tortuosity.'' The {\bf tortuosity exponent} of $\gam$ is defined as
$$\tau(\gam)
\equiv\inf\setb{s>0 : \lim_{r\decto0} r^s M_r(\gam)=0}$$

\begin{ppn} \label{ppn:holder.tort}
For $\gam\in\crv{D}$, $\al(\gam)=1/\tau(\gam)$.

\begin{proof}
If $\psi$ is an increasing homeomorphism of $[0,1]$ such that $\psi(|\gam(t_1) - \gam(t_2)|) \le |t_1-t_2|$ whenever $|\gam(t_1)-\gam(t_2)| \le 1$, then certainly $M_r(\gam) \le \ceil{1/\psi(r)}$ for all $0<r\le 1$, and it follows that $\tau(\gam)\le\al(\gam)^{-1}$.

Suppose conversely that $M_r(\gam)\le 1/\psi(r)$ for $0<r\le1$. For a curve $\eta\in\crv{D}$, define
$$\tilde t(\gam) \equiv \sum_{n\ge0} \f{\psi(r_n) M_{r_n}(\gam)}{(n+1)^2},
\quad r_n\equiv 2^{-n}.$$
Take an auxiliary parametrization $\gam_0$ for $\gam$, and consider
$$t(u)\equiv \f{\tilde t(\gam_u)}{\tilde t(\gam)},\quad 0\le u\le 1.$$
(Notice $\tilde t(\gam)\le 2$ by the assumed bound on $M_r(\gam)$). Then $t:[0,1]\to[0,1]$ is a strictly increasing right-continuous function. Its generalized inverse $u$ is continuous, so we may reparametrize $\gam(t)=\gam_0(u(t))$. Let $r=\abs{\gam(s)-\gam(t)}$ for $s<t$: then $M_{r_n}(\gam_t)-M_{r_n}(\gam_s)\ge1$ for all $r_n\le r$, i.e.\ for all $n \ge \log_2(1/r)$, so
$$t-s \ge \f{\psi(r)}{2(\log_2(1/r)+1)^2}.$$
This implies $\al(\gam)\ge\tau(\gam)^{-1}$ which concludes the proof.
\end{proof}
\end{ppn}

Tortuosity in general is difficult to compute or estimate, but under regularity conditions $M_r(\gam)$ can be bounded by quantities which depend only on the curve's {\bf trace}, the set $\set{\gam(t):t\in[0,1]}$. For example, let $N_r(\gam)$ denote the minimal $n$ such that the trace of $\gam$ can be covered by $n$ sets of diameter $r$. The {\bf upper box dimension} or {\bf Minkowski dimension} of the curve is
$$\boxdim(\gam)
\equiv\inf\setb{ s>0 : \lim_{r\decto0} r^s N_r(\gam)=0 }.$$
Trivially $N_r(\gam)\le M_r(\gam)$ so $\boxdim\le\tau$. Further it is easy to see that if $r'<r$ then
$$N_{r'}(\gam) \le N_r(\gam) \ceil{r/r'}^d,$$
so $\boxdim\le d$ while $\tau$ is unbounded.

Recall Defn.~\ref{eq:ker.cross} of a $(k,\eta,r)$-crossing. Let us say that a curve $\gam$ has the {\bf tempered crossing property} if for all $0<\eta<\eta_0$ there exist $k,r_0$ (both depending on $\eta$) such that $\gam$ exhibits no $(k,\eta,r)$-crossings.

\begin{ppn} \label{ppn:tempered}
If $\gam\in\crv{D}$ exhibits no $(k,\eta,r)$-crossings then
\beq \label{eq:tort.boxdim}
M_{2r}(\gam) \le k N_{r^{1+\eta}}(\gam).
\eeq
Consequently, if $\gam$ has the tempered crossing property then $\boxdim(\gam)=\tau(\gam)$.

\begin{proof}
We already noted $\boxdim\le\tau$ so it remains to prove the reverse inequality. Fix $\eta>0$ and let $k,r$ be as given by the tempered crossing property. Partition the curve as follows: set $t_0=0$, and for $j>0$ let
$$t_j = \inf\set{t>t_{j-1} :
    \abs{\gam(t)-\gam(t_{j-1})}}\ge 2r,$$
provided this time is well-defined. If $\gam$ does not leave $\ball[\gam(t_{j-1})]{2r}$, terminate by setting $t_j=1$.

The number of segments is an upper bound for $M_{2r}(\gam)$. Consider any covering of $\gam$ by balls of radius $r^{1+\eta}$: since $\gam$ has no $(k,\eta,r)$-crossings and $\abs{\gam(t_j)-\gam(t_{j-1})}\ge 2r$, each ball can contain at most $k$ of the points $\gam(t_j)$. This implies \eqref{eq:tort.boxdim}, and so $\tau(\gam) \le (1+\eta)\boxdim(\gam)$. The result for a curve with the tempered crossing property follows by taking $\eta\decto0$.
\end{proof}
\end{ppn}
\subsection{Proof of tightness}

We now prove Thm.~\ref{thm:ab}. Recall $(F^\de)_{0<\de\le\de_0}$ is a system of configurations specified by laws $(\P^\de)_{0<\de\le\de_0}$.

\begin{lem} \label{lem:tempered.whp}
Under the hypotheses of Thm.~\ref{thm:ab}, for all $\ep,\eta>0$ there exist $k,r_0$ such that
$$\P^\de[ F^\de \in E(k,\eta,r) \text{ for some } r\le r_0 ] <\ep$$
for all $0<\de\le\de_0$.

\begin{proof}
Suppose $F^\de$ has a $(k,\eta,r)$-crossing at $x_0\in D$ for $r\le 4^{-1/\eta}$. If $x$ is any point in $\ball[x_0]{r^{1+\eta}/2}$, then $F^\de$ exhibits a $k$-fold crossing of $\ann{x}{2r^{1+\eta}}{r/2}$, hence also of $\ann{x}{2r_n^{1+\eta}}{r_{n+1}/2}$ where $r_{n+1}<r\le r_n$ ($r_n\equiv 2^{-n}$). Applying the crossing exponent hypothesis for shells $\ann{x}{2r_n^{1+\eta}}{r_{n+1}/2}$ centered at an $r_n^{1+\eta}/2$-net of points $x\in D$ gives
$$\P^\de[ F^\de \in E(k,\eta,r) \text{ for some } r_{n+1}<r\le r_n ]
\le c_k' \f{r^{\eta \lm_k}}{r^{(1+\eta)d}}.$$
Choose $k$ large enough (depending on $\eta$) so that the exponent on $r$ is positive, and sum over scales $r_n \le r_0$ to conclude the result.
\end{proof}
\end{lem}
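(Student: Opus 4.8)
The plan is a two-layer union bound: first over dyadic scales $r_n\equiv 2^{-n}$, and then over a net of possible crossing centers, fed by the single-shell estimate packaged in the uniform-crossing-exponent hypothesis; the uniformity in $\de$ will be automatic, since that hypothesis is itself uniform in $\de$. Fix an $\eta$-dependent threshold $r^*(\eta)>0$ small enough that $2r_n^{1+\eta}<r_{n+1}/2$ and $r_n^{1+\eta}/2\le r_n/4$ whenever $r_n\le r^*(\eta)$.

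\emph{Step 1 (localization).} I would first show that a $(k,\eta,r)$-crossing of $F^\de$ at a point $x_0\in\clos D$ with $r_{n+1}<r\le r_n\le r^*(\eta)$ forces, for \emph{every} $x$ with $\abs{x-x_0}\le r_n^{1+\eta}/2$, a $k$-fold crossing of the \emph{fixed} annulus $\ann{x}{2r_n^{1+\eta}}{r_{n+1}/2}$. This is a short triangle-inequality check: one has $\ball[x_0]{r^{1+\eta}}\subseteq\ball[x]{2r_n^{1+\eta}}$ and $\ball[x]{r_{n+1}/2}\subseteq\ball[x_0]{r}$, so each of the $k$ disjoint sub-arcs of a curve $\gam\in F^\de$ traversing $\ann{x_0}{r^{1+\eta}}{r}$ contains a point within distance $2r_n^{1+\eta}$ of $x$ and a point at distance $\ge r_{n+1}/2$ from $x$; since the distance-to-$x$ function is continuous along each sub-arc and $2r_n^{1+\eta}<r_{n+1}/2$, every such sub-arc traverses $\ann{x}{2r_n^{1+\eta}}{r_{n+1}/2}$. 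Taking $x$ in a finite $(r_n^{1+\eta}/2)$-net of $\clos D$ — which, since $\diam D\le 1$, may be chosen of cardinality $\lesssim r_n^{-(1+\eta)d}$ — gives the inclusion of events
$$\set{F^\de\in E(k,\eta,r)\text{ for some }r_{n+1}<r\le r_n}
\subseteq\bigcup_{x}\set{\gam\in F^\de\text{ has a }k\text{-fold crossing of }\ann{x}{2r_n^{1+\eta}}{r_{n+1}/2}},$$
the union being over the net.

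\emph{Step 2 (a priori bound and summation).} Each shell $\ann{x}{2r_n^{1+\eta}}{r_{n+1}/2}$ has inner-to-outer radius ratio $\asymp r_n^\eta$, so the uniform-crossing-exponent hypothesis bounds the $\P^\de$-probability of a $k$-fold crossing of it by $\lesssim c_k\,r_n^{\eta\lm_k}$; summing over the $\lesssim r_n^{-(1+\eta)d}$ centers of the net then yields
$$\P^\de[F^\de\in E(k,\eta,r)\text{ for some }r_{n+1}<r\le r_n]\le c_k'\,r_n^{\eta\lm_k-(1+\eta)d}.$$
Since $\lm_k\to\infty$ I fix $k=k(\eta)$ with $\beta\equiv\eta\lm_k-(1+\eta)d>0$, so the per-scale bound is $\le c_k'\,2^{-\beta n}$; summing over all dyadic scales $r_n\le r_0$ (taking $r_0\le r^*(\eta)$),
$$\P^\de[F^\de\in E(k,\eta,r)\text{ for some }r\le r_0]\le\sum_{n:\,r_n\le r_0}c_k'\,2^{-\beta n}\lesssim r_0^{\beta},$$
which is independent of $\de$; choosing $r_0$ small enough that the right-hand side is $<\ep$ completes the proof.

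\emph{Where the difficulty lies.} Everything after Step 1 is a geometric-series union bound, and the genuine probabilistic input — the existence of crossing exponents $\lm_k\to\infty$, supplied for percolation by Thm.~\ref{thm:rsw} and the BK inequality as in Cor.~\ref{cor:ab} — enters only through the hypothesis. The one delicate point is Step 1: choosing the radii of the fixed shells and the spacing of the net consistently, so that an \emph{arbitrary}-scale, arbitrary-center $(k,\eta,r)$-crossing really is captured by a $k$-fold crossing of one of the countably many fixed shells, with the \emph{same} $k$ and with radius ratio still tending to $0$ as $n\to\infty$, so the hypothesis remains effective.
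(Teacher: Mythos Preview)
Your proof is correct and follows essentially the same approach as the paper's: localize an arbitrary $(k,\eta,r)$-crossing to a $k$-fold crossing of a fixed dyadic shell centered at a nearby net point, apply the per-shell crossing-exponent bound, union-bound over the $\lesssim r_n^{-(1+\eta)d}$ net points, choose $k$ so that $\eta\lm_k>(1+\eta)d$, and sum the resulting geometric series over dyadic scales. Your version is in fact somewhat more careful than the paper's about the threshold conditions (you explicitly impose $2r_n^{1+\eta}<r_{n+1}/2$ and $r_n^{1+\eta}/2\le r_n/4$, and verify the triangle-inequality inclusions $\ball[x_0]{r^{1+\eta}}\subseteq\ball[x]{2r_n^{1+\eta}}$ and $\ball[x]{r_{n+1}/2}\subseteq\ball[x_0]{r}$), whereas the paper simply asserts the localization with the single threshold $r\le 4^{-1/\eta}$.
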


Define the random variables
$$N^\de(r,R)
\equiv \sup\set{ N_r(\gam) : \gam\in F^\de, \diam\gam\ge R }.$$
For each $\de$, $N^\de(r,R)$ is nonincreasing in $r,R$.

\begin{lem} \label{lem:boxdim.stoch.bd}
Under the hypotheses of Thm.~\ref{thm:ab}, the random variables
$$X^\de\equiv \sup_R
    \sup_{r\le R} \f{N^\de(r,R) r^d}{(r/R)^{\lm}} \ell(r),$$
where $\lm\equiv\lm_1$ and $\ell$ is a polylogarithmic factor, are stochastically bounded.

\begin{proof}
Since $D$ is a bounded domain, it suffices to obtain a bound over all scales $0<r\le R\le 1$. For $r>0$ let $\Pi_r$ denote the standard grid partition of $\R^d$ into rectangles of diameter $r$. For $r\le R$ let $\wt N^\de(r,R)$ denote the number of sets $B\in\Pi_r$ meeting some curve in $F^\de$ of diameter $\ge R$; clearly $N^\de(r,R)\le \wt N^\de(r,R)$.

For each $B\in\Pi_r$ let $x_B$ be a point such that $B \subseteq \ball[x_B]{r/2}$. If $B \in \Pi_\ell$ meets a curve of diameter $\ge R$, we must have a crossing of the spherical shell $\ann{x_B}{r/2}{R/2}$, which by hypothesis occurs with probability $\le c_1 (r/R)^\lm$. Summing over the $\asymp r^{-d}$ sets in $\Pi_r$ gives
$$\E_\de[\wt N^\de(r,R)] \le c r^{-d} (r/R)^\lm.$$
To obtain a bound over all scales, let
$$U^\de
\equiv \sum_{m\ge0} \f{1}{(m+1)^2}
    \sum_{n\ge m} \f{1}{(n+1)^2}
    \f{\wt N^\de_{r_n,R_m}}{\E[\wt N^\de_{r_n,R_m}]},\quad
    r_n\equiv 2^{-n}, \ R_m \equiv 2^{-m}.$$
$\E_\de[U^\de]=1$ so the $U^\de$ are stochastically bounded by Markov's inequality. If $R_m\le R<R_{m-1}$ and $r_n\le r<r_{n-1}$ then
\begin{align*}
&\wt N^\de(r,R) \le \wt N^\de(r_n,R_m)
\le U^\de (n+1)^4 \E[\wt N^\de(r_n,R_m)]\\
&\le U^\de [\log_2(1/r)+2]^4 r^{-d} (r/R)^\lm.
\end{align*}
This bound holds simultaneously for all $r\le R$ so the result follows.
\end{proof}
\end{lem}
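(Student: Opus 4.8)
The plan is to control the covering numbers $N^\de(r,R)$ in expectation, one pair of scales at a time, using only the one-arm crossing exponent $\lm=\lm_1$, and then to promote these scale-by-scale estimates to a single bound holding simultaneously at \emph{all} scales, at the affordable cost of a polylogarithmic factor. First I would replace $N_r$ by a grid count: for $r>0$ let $\Pi_r$ be the standard partition of $\R^d$ into axis-parallel cubes of diameter $\asymp r$, and let $\wt N^\de(r,R)$ be the number of cubes $B\in\Pi_r$ met by some $\gam\in F^\de$ with $\diam\gam\ge R$, so that $N^\de(r,R)\lesssim\wt N^\de(r,R)$; the advantage is that $\wt N^\de(r,R)$ is a sum of indicators, one per cube. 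If a cube $B\in\Pi_r$, contained in a ball $\ball[x_B]{r/2}$, is met by a curve of $F^\de$ of diameter $\ge R$, then that curve crosses the annulus $\ann{x_B}{r/2}{R/2}$, an event of $\P^\de$-probability $\lesssim(r/R)^{\lm}$ by the one-arm crossing-exponent hypothesis, with a constant independent of $\de$. Summing over the $\asymp r^{-d}$ relevant cubes, linearity of expectation gives
$$\E_\de\bigl[\wt N^\de(r,R)\bigr]\;\lesssim\;r^{-d}(r/R)^{\lm}\qquad(0<r\le R\le1),$$
where we used $\diam D\le1$.

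The key step is to upgrade this to an estimate uniform over the continuum of scales, since a single Markov inequality at one pair $(r,R)$ is not enough. Here I would use the classical summable-weight device: writing $r_n=2^{-n}$ and $R_m=2^{-m}$, set
$$U^\de\;\equiv\;\sum_{m\ge0}\frac{1}{(m+1)^2}\sum_{n\ge m}\frac{1}{(n+1)^2}\;\frac{\wt N^\de(r_n,R_m)}{\E_\de[\wt N^\de(r_n,R_m)]},$$
so that $\E_\de[U^\de]=\sum_{m\ge0}(m+1)^{-2}\sum_{n\ge m}(n+1)^{-2}$ is a finite constant \emph{independent of $\de$}; hence $U^\de$ is stochastically bounded, uniformly in $\de$, by Markov's inequality.

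To conclude, given arbitrary $0<r\le R\le1$ choose $n\ge m\ge0$ with $r_n\le r<r_{n-1}$ and $R_m\le R<R_{m-1}$. Since $\wt N^\de(\cdot,\cdot)$ is nonincreasing in each argument,
$$\wt N^\de(r,R)\;\le\;\wt N^\de(r_n,R_m)\;\le\;U^\de\,(n+1)^2(m+1)^2\,\E_\de[\wt N^\de(r_n,R_m)],$$
and combining this with the first-moment bound, with $r_n\asymp r$, $R_m\asymp R$, and with $(n+1)(m+1)\asymp\log(1/r)\log(1/R)\le(\log(1/r))^{2}$, we obtain
$$N^\de(r,R)\;\lesssim\;U^\de\,\bigl(\log_2(1/r)+2\bigr)^{4}\,r^{-d}(r/R)^{\lm}$$
for all $r\le R$ at once. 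Taking $\ell(r)\asymp(\log_2(1/r)+2)^{-4}$ (a polylogarithmic factor, chosen $\le1$), this rearranges to $X^\de\lesssim U^\de$, and the stochastic boundedness of $U^\de$ finishes the proof.

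I expect the only genuine difficulty to be the second step above: passing from the trivial pointwise first-moment estimate to a random bound valid at all scales simultaneously with merely a polylogarithmic loss; the weighted-double-sum/Markov argument is the standard way to do this. The rest is bookkeeping, the two things to watch being (i) that every implied constant --- the number of cubes of $\Pi_r$ meeting a neighbourhood of $D$, and the crossing-probability bound --- be genuinely uniform in $\de$, which is precisely what the hypothesis of \emph{uniform} crossing exponents provides, and (ii) the elementary monotonicity $\wt N^\de(r,R)\le\wt N^\de(r',R')$ for $r'\le r$ and $R'\le R$, which permits the reduction to dyadic scales.
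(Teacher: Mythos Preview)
Your proof is correct and follows essentially the same approach as the paper's: replace $N^\de$ by the grid count $\wt N^\de$, bound its expectation at each pair of scales via the one-arm crossing exponent, package the dyadic scales into the weighted double sum $U^\de$ with finite expectation (you are in fact slightly more careful than the paper, which writes $\E_\de[U^\de]=1$ rather than ``a finite constant''), and use monotonicity to pass from dyadic to arbitrary scales.
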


\begin{proof}[Proof of Thm.~\ref{thm:ab}]
We can always decrease the $\lm_k$, so assume without loss that $\lm<d$. Let $\ep,\eta>0$, and let $k,r_0$ be as given by Lem.~\ref{lem:tempered.whp}, so that the bound \eqref{eq:tort.boxdim} holds with $\P^\de$-probability $>1-\ep$ for all $0<\de\le\de_0$. Then by Lem.~\ref{lem:boxdim.stoch.bd} there exists $C<\infty$ such that
$$M_r(\gam)
\le \f{C}{ (\diam \gam)^\lm }
    \f{\ell(r^{1+\eta})}{ r^{(1+\eta)(d-\lm)}}
    \quad \forall r\le \diam \gam, \ \forall\gam\in F^\de$$
holds with $\P^\de$-probability $\ge 1-2\ep$ for all $0<\de\le\de_0$. Inverting this (as in the proof of Propn.~\ref{ppn:holder.tort}) gives the H\"older bound
\beq \label{eq:holder.diam}
\abs{\gam(s)-\gam(t)} \le
    C \ell(r) g(\diam\gam) \abs{s-t}^{1/[(1+\eta)(d-\lm)]},
\eeq
where $\ell$ is a (different) polylogarithmic factor and $g(R) \equiv R^{-\lm/[(1+\eta)(d-\lm)]}$. Interpolating between this and the trivial bound $\abs{\gam(s)-\gam(t)}\le\diam\gam$ gives
$$\abs{\gam(s)-\gam(t)} \le
    C \ell(r) \abs{s-t}^{1/[(1+\eta)(d-\lm)+\lm]}.$$
Thus we have found a H\"older continuity bound holding with $\P^\de$-probability $\ge1-2\ep$ for all $0<\de\le\de_0$, and tightness follows by the Arzel\`a-Ascoli theorem.
\end{proof}

\begin{rmk} \label{rmk:minkowski}
Notice that \eqref{eq:holder.diam}, Propn.~\ref{ppn:holder.tort}, and the relation $\boxdim\le\tau$ imply that the limiting curves have Minkowski dimension strictly less than $d$.
\end{rmk}

\section{Limit of the exploration path}
\label{ch:explore}

In this final section we present a proof of Thm.~\ref{thm:explore}. Our exposition is based on the work of Binder, Chayes, and Lei \cite{\bclperci,\bclpercii} and of Camia and Newman \cite{\cmnw}.

By the results of \S\ref{ch:scaling}, the set of laws $\P^\de$ of the percolation configurations (regarded as curve configurations) is precompact: from any sequence $\de\decto0$ we can extract a further sequence along which $\P^\de$ converges weakly, to a limit which depends {\it a priori} on the particular subsequence. It remains therefore to uniquely identify the weak limits, which will be done in \S\ref{ssec:explore.pf} using the characterization of Propn.~\ref{ppn:char.sle6}. Before doing this, however, we need to address some issues concerning discrete approximation for $k$-marked domains which were ignored in \S\ref{ch:perc} (see Rmk.~\ref{rmk:admiss}): this is the topic of \S\ref{ssec:admiss}.

\begin{rmk} \label{rmk:skorohod}
Whenever we have a precompact family of probability measures on a separable space we will assume that we work within a weakly convergent subsequence, and further, by the Skorohod coupling theorem (see Rmk.~\ref{rmk:wk.conv}), that this sequence has an a.s.\ convergent coupling.

Specifically, by Cor.~\ref{cor:ab} and the subsequent comments, we may assume that along our subsequence $F^\de\to F$ with respect to the Hausdorff metric on $\crvset{D}$, and that $\gam^\de\to\gam$ uniformly.
\end{rmk}

\subsection{Admissible domains and discrete approximation}
\label{ssec:admiss}

Recall the notation for $k$-marked domains introduced in Defns.~\ref{dfn:marked.domain} and~\ref{dfn:marked.domain.disc}. Clearly, even if $D_2$ is restricted to be a Jordan domain, to prove this result one needs to consider more generally the ``slit domains'' $(D\setminus K_t;\gam(t),b)$, where $K_t$ is the filling of $\gam[0,t]$ (see \S\ref{sec:lde.sle}). A useful notion here is that of {\bf prime end}, first introduced by Carath\'eodory \cite{\caratheodory}. We omit a formal definition (see \cite{\pommerenke,\epstein}), but roughly speaking a prime end of $D$ is a ``conformal boundary point'' --- it may not be a boundary point itself, but it ``corresponds'' to a boundary point of $\D$ under conformal mappings $\D \to D$. For example, if $D=\D\setminus[0,1]$, the point $1/2$ ``splits'' into two distinct prime ends: any conformal map $\D\to D$ has a continuous extension to the unit circle by Carath\'eodory's theorem, and two distinct points will map to $1/2$.

\begin{dfn}
A {\bf (generalized) $k$-marked domain} means a domain $D$ whose boundary $\pd D$ is a continuous closed curve $\eta:[0,1]\to\C$, with marked prime ends $P_i = \eta(t_i)$ for $0= t_1 \le \cdots \le t_k\le 1$. The domain is {\bf admissible} if $\eta$ is simple on each $[t_i,t_{i+1}]$.
\end{dfn}

A rather more subtle point is that we need in addition a more general notion of discrete approximation than that of Defn.~\ref{dfn:domain.unif.conv}. A natural form of convergence for complex domains is \emph{Carath\'eodory convergence}: for complex domains $D_n,D$ it is said that {\bf $D_n$ converges to $D$ in the Carath\'eodory sense} if
\begin{enumerate}[(i)]
\item $z\in D$ implies $z\in \liminf D_n$, and
\item $z_n \notin D_n$, $z_n\to z$ implies $z \notin D$.
\end{enumerate}
Note that a single sequence can have multiple Carath\'eodory limits, for example the doubly slit domain $\C\setminus((-\infty,1/n] \cup [1/n,\infty))$ converges to both $\H$ and $-\H$.

\newtheorem*{thm:ker}{Carath\'eodory kernel theorem}
\begin{thm:ker}
Let $D_n,D$ complex domains. There exist conformal maps $f_n:\H\to D_n$, $f:\H\to D$ with $f_n\to f$ locally uniformly in $\H$ if and only if $D_n\to D$ in the Carath\'eodory sense.
\end{thm:ker}

However, as noted in \cite{\bclpercii}, Carath\'eodory convergence is insufficient for the verification of Cardy's formula because of boundary issues. This motivates the following (stronger) definition of domain convergence:

\begin{dfn} Let $D_k^\de,D_k$ be admissible $k$-marked domains. We say $D_k^\de$ {\bf converges conformally} to  $D_k$, denoted $D_k^\de \cconv D_k$, if each $A_j^\de$ converges uniformly to a curve $A_j^\star$ traveling from $P_j$ to $P_{j+1}$ such that
\begin{enumerate}[(i)]
\item $A_j \subseteq A_j^\star \subset \C\setminus ( D \cup (\bigcup_{\ell\ne j} A_\ell ))$ (as sets), and
\item both $A_j^\star$ and its reverse are self-avoiding.\footnote{Formally, we mean that there exists a domain $D \subset D' \subset \C\setminus \bigcup_{\ell\ne j} A_\ell$ such that both $A_j^\star$ and its reverse travel chordally in $D'$, and are self-avoiding in the sense of Defn.~\ref{dfn:self.avoiding}.}
\end{enumerate}
\end{dfn}

Conformal convergence is weaker than the uniform convergence of Defn.~\ref{dfn:domain.unif.conv} because we only require $A_j^\star$ to agree with $A_j$ from the ``perspective'' of the domain interior; $A_j^\star$ is allowed to make excursions away from $\pd D$. We leave the reader to verify that the proof of Thm.~\ref{thm:smirnov} also implies the following generalization:

\begin{thm}
\label{thm:smirnov.gen}
Let $D_4^\de$ be discrete four-marked domains converging conformally to the bounded admissible domain $D_4$. Then
$$\Phi(D_4^\de) \to \Phi(D_4),$$ where $\Phi(D_4)$ denotes the evaluation of Cardy's formula for $D_4$.\footnote{Note that Cardy's formula still makes sense because we marked prime ends on the boundary of $D_4$.}
\end{thm}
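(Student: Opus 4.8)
The plan is to re-run the proof of Thm.~\ref{thm:smirnov} unchanged in the interior of $D$, and to replace the now-false hypothesis ``$A_j^\de\to A_j$ uniformly'' by conformal convergence at the one place where it is actually used --- the verification of condition (ii) of Lem.~\ref{lem:unique.triple}. Using the footnote to the definition of conformal convergence, fix for each $j$ an ambient domain $D'\supset D$ with $D'\subset\C\setminus\bigcup_{\ell\ne j}A_\ell$ in which $A_j^\star$ and its reverse travel chordally and are self-avoiding, and regard the $D^\de$ and $D$ as subdomains of such a $D'$. As in \S\ref{sec:perc.completing}, extend each separating probability function $s_j^\de$ off $D^\de$ and let $g_j^\de$ denote its piecewise-linear interpolation. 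The $g_j^\de$ are uniformly bounded, being probabilities, and Propn.~\ref{ppn:color.switch} supplies the same $O(\de^\lm)$ bound on their discrete derivatives as in the uniform-convergence case, so the $g_j^\de$ are uniformly equicontinuous on compact subsets of $D$; by Arzel\`a--Ascoli and a diagonal argument we may pass to a subsequence along which $g_j^\de\to h_j$ locally uniformly in $D$ for continuous $h_j$.

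Condition (i) of Lem.~\ref{lem:unique.triple} is purely interior and carries over verbatim. Carath\'eodory convergence guarantees that $D^\de$ eventually contains any fixed compact subset of $D$, so a triangular contour $C$ in $D$ may be approximated by discrete triangular contours $C^\de\subset D^\de$; Propn.~\ref{ppn:rotate.int} gives $\doint{C^\de}s_{j+1}^\de(z)\,\d z-\ze\doint{C^\de}s_j^\de(z)\,\d z=O(\de^\lm L)\to0$, and the local uniform convergence of the $g_j^\de$ passes this to $\oint_C h_{j+1}(z)\,\d z=\ze\oint_C h_j(z)\,\d z$. Morera's theorem then gives that $f=h_1+\ze h_2+\ze^2 h_3$ and $h=h_1+h_2+h_3$ are holomorphic, so the $h_j$ are harmonic in $D$.

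The substantive step, and the one I expect to be the main obstacle, is to show that the $h_j$ extend continuously to the prime-end boundary of $D$ with $h_j\equiv0$ on $A_j$ and $h_{j+1}+h_{j+2}\equiv1$ on $A_j$. Fix a prime end $P$ in the open arc $A_j$. Because $D_4$ is admissible, $\eta$ is simple on $[t_j,t_{j+1}]$, so there is $R>0$ with $\ball[P]{R}$ disjoint from the other marked boundary structure; then any blue $A_{j-1}^\de$--$A_{j+1}^\de$ simple path separating a point $z\in\ball[P]{\rho}\cap D^\de$ from $A_j^\de$ must cross the annulus $\ann{P}{\rho}{R}$, so Thm.~\ref{thm:rsw} yields $s_j^\de(z)\lesssim(\rho/R)^\lm$ for $\de$ small relative to $\rho$, whence $h_j(z)\lesssim\rho^\lm\to0$ as $z\to P$. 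Conformal convergence enters precisely in justifying that a separating blue path is forced to cross $\ann{P}{\rho}{R}$ and that Thm.~\ref{thm:rsw} applies there: condition (i) of conformal convergence ($A_j\subseteq A_j^\star\subset\C\setminus(D\cup\bigcup_{\ell\ne j}A_\ell)$) places the discrete arcs $A_\ell^\de$ correctly relative to $\ball[P]{R}$ from the interior perspective, and condition (ii) (self-avoidance of $A_j^\star$ and its reverse) rules out $A_j^\de$ pinching off the region near $P$, which would otherwise destroy the annulus estimate. The complementary identity $h_{j+1}+h_{j+2}\equiv1$ on $A_j$ follows from the same planar-duality argument on the hexagonal lattice near $P$ used in the uniform-convergence case --- the blue/yellow interface issuing from near $P$ terminates on the $A_{j+1}$ side or the $A_{j+2}$ side, realizing $E_{j+1}(z)$ or $E_{j+2}(z)$ up to the event that it strays far from $P$, again controlled by Thm.~\ref{thm:rsw}. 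Near a marked prime end $P_i$ itself, where $\eta$ need not be simple, one instead uses that by Defn.~\ref{dfn:marked.domain.disc} each $P_i^\de$ is incident to a unique hexagon inside $D^\de$, so the same annulus arguments localize the boundary behavior; continuity of the $h_j$ on $\clos D$ then follows by the usual barrier argument.

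With conditions (i), (ii) and boundary continuity established, Lem.~\ref{lem:unique.triple}, applied to the three-marked domain $D_3$ obtained by forgetting $P_4$, identifies $(h_1,h_2,h_3)$ as the unique harmonic conjugate triple, a conformal invariant depending only on the prime-end data. In particular the subsequential limit is independent of the subsequence, so $g_j^\de\to h_j$ along the full sequence, and hence $\Phi(D_4^\de)=s_2^\de(P_4^\de)=g_2^\de(P_4^\de)\to h_2(P_4)$. Finally, applying the computation of ``Cardy's formula in Carleson's form'' to the conformal image of $D_3$ in the equilateral triangle $\De$ identifies $h_2(P_4)$ with the evaluation $\Phi(D_4)$ of Cardy's formula, which completes the argument.
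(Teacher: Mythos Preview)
The paper does not give a proof of this statement at all; it simply writes ``We leave the reader to verify that the proof of Thm.~\ref{thm:smirnov} also implies the following generalization,'' and moves on. Your proposal is exactly the verification the author has in mind: rerun the interior argument (Propns.~\ref{ppn:color.switch} and~\ref{ppn:rotate.int}) unchanged to obtain condition~(i) of Lem.~\ref{lem:unique.triple} for any subsequential limit, and replace the uniform-convergence hypothesis by conformal convergence only at the boundary, where RSW (Thm.~\ref{thm:rsw}) is used to force $h_j\to0$ on $A_j$ and $h_{j+1}+h_{j+2}\to1$ on $A_j$. So in approach and in substance you are aligned with the paper.

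A few places where your sketch is looser than a fully rigorous argument would require: first, your equicontinuity is only on compacta of $D$, so the passage $g_2^\de(P_4^\de)\to h_2(P_4)$ needs an extra step (compare $s_2^\de(P_4^\de)$ with $s_2^\de(z)$ for a fixed interior $z$ near $P_4$ via RSW, then use interior convergence, then boundary continuity of $h_2$); you allude to this but do not spell it out. Second, your annulus argument near a prime end $P\in A_j$ assumes a ball around $P$ misses the other arcs; in the generalized setting the boundary curve $\eta$ need not be globally simple, so another arc may well pass through the \emph{point} $\eta(t)=P$, and the separation must be argued in the prime-end sense (i.e.\ within $D$) rather than metrically --- this is where conditions (i) and (ii) of conformal convergence are genuinely needed, and your invocation of them is correct in spirit but would benefit from being made precise. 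These are refinements rather than gaps; the structure of your argument is the intended one.
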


\subsection{Convergence of slit domains}
\label{ssec:explore.pf}

We turn finally to the verification of the conditions of Propn.~\ref{ppn:char.sle6}, which will be done in an inductive manner as follows: let $D_2^\de$ be discrete domains converging conformally to the bounded admissible domain $D_2$ (again, here $D$ is thought of as the original domain minus the filling up to some time $t$). Let $J$ be a crosscut of $D_2$ joining $c\in\ol{ab}$ to $d\in\ol{ba}$, and let $K\equiv K(D_2,J)$. Take conformal maps $f^\de:\H_2\to D_2^\de$, $f:\H_2\to D_2$ with $f^\de\to f$ locally uniformly on $\H_2$ as given by the Carath\'eodory kernel theorem, and let $J^\de \equiv f^\de \circ f^{-1}(J)$. Consider running the discrete curve $\gam^\de$ until the first time $\tau^\de$ that it reaches a hexagonal vertex on the other side of $J^\de$ from the initial point. The {\bf discrete filling} $K^\de \equiv K^\de(D_2^\de,J^\de)$ is the smallest simply connected closed set containing the union of all hexagons explored up to time $\tau^\de$.

By Rmk.~\ref{rmk:skorohod}, we always work along a subsequence with $F^\de\to F$ in $\crvset{D}$ and $\gam^\de\to\gam$ uniformly. In fact, we assume (passing to a further subsequence as needed) that the discrete filling boundaries $\pd K^\de$ converge uniformly to a curve $\eta$. Note that $\eta$ need \emph{not} agree with $\pd K$ --- in fact, from our definition of conformal convergence, it need not even lie entirely in $D$! The following is the main content of the inductive step:

\begin{ppn} \label{ppn:admiss.cardy}
In the setting described above, almost surely $\gam,\eta$ are self-avoiding, $(D^\de \setminus K^\de;\gam(\tau^\de),b^\de) \cconv
(D\setminus K;\gam(\tau),b)$, and the law of $K$ is given by Cardy's formula.
\end{ppn}

As a first step, we show that the stopping rules determined by $J$ and $J^\de$ are consistent:

\begin{lem} \label{lem:crosscut}
In the setting of Propn.~\ref{ppn:admiss.cardy}, parametrize $\gam$ by $\hcap$ and parametrize $\gam^\de$ so that $\sup_t \abs{\gam^\de(t)-\gam(t)}\to0$. Then $\tau^\de\to\tau$.

\begin{proof}
If $\tau^\de$ fails to converge to $\tau$ as $\de\decto0$ this means that the curves $\gam^\de$ approach arbitrarily closely a point $z_0\in J^\de$ without crossing, then travel a constant-order distance away (so that capacity increases) before eventually crossing $J^\de$. This implies that for some $\ep>0$ and for all $\ep'<\ep$, if we condition on the first time that $\gam^\de$ comes within distance $\ep'$ of $J^\de$ and let $z_0^\de$ denote the nearest point on $J^\de$, the curve will exit the annulus $\ann{z_0^\de}{\ep'}{\ep}$ without crossing $J^\de$. For fixed $\ep$ this probability decreases to zero as $\ep'\decto0$ (e.g.\ by Thm.~\ref{thm:rsw}), and taking $\ep\decto0$ proves $\tau^\de\to\tau$.
\end{proof}
\end{lem}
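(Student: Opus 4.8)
The plan is to prove the two one-sided statements $\liminf_{\de\decto0}\tau^\de\ge\tau$ and $\limsup_{\de\decto0}\tau^\de\le\tau$ separately. The first holds pathwise along the a.s.\ convergent coupling of Rmk.~\ref{rmk:skorohod}; the second is the delicate one and will be obtained from Thm.~\ref{thm:rsw}. Throughout I use that $f^\de\to f$ locally uniformly on $\H$ forces $J^\de=f^\de\circ f^{-1}(J)\to J$ uniformly, hence that the two components $D_0^\de,D_\infty^\de$ of $D^\de\setminus J^\de$ (incident to $a^\de$, resp.\ $b^\de$) converge in the conformal---and in particular Carath\'eodory---sense to $D_0,D_\infty$. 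Note also that, since $\gam[0,t]\subset D_0$ for $t<\tau$ while $J\cap D_0=\emptyset$, the curve $\gam$ first meets $J$ exactly at time $\tau$ (at an interior point of $J$, a.s.). Here and below $\gam$ is parametrized by $\hcap$ and $\gam^\de$ so that $\sup_t\abs{\gam^\de(t)-\gam(t)}\to0$.

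\emph{Lower bound.} For $\eta>0$ the set $\gam[0,\tau-\eta]$ is a compact subset of the open set $D_0$, hence lies at positive distance $d$ from $\pd D_0\supseteq J$. By the coupling and the convergence of $J^\de$ and $D_\infty^\de$, for $\de$ small $\gam^\de[0,\tau-\eta]$ stays inside $D_0^\de$ at distance $\ge d/2$ from $J^\de$, so $\gam^\de$ has not reached the far side of $J^\de$ by time $\tau-\eta$ and $\tau^\de\ge\tau-\eta$. Letting $\eta\decto0$ gives $\liminf_\de\tau^\de\ge\tau$.

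\emph{Upper bound.} Fix $0<\ep'<\ep$ small and let $B^\de_{\ep',\ep}$ be the event that, after $\gam^\de$ first reaches distance $\le\ep'$ from $J^\de$ --- at a point $z_0^\de\in J^\de$ --- it reaches distance $\ge\ep$ from $z_0^\de$ before reaching the far side of $J^\de$. On this event some segment of $\gam^\de$ traverses the annulus $\ann{z_0^\de}{\ep'}{\ep}$, and, $\gam^\de$ being a percolation interface, this forces a monochromatic (blue or yellow) crossing of that annulus; covering a fixed neighborhood of $J$ by $O(1/\ep)$ discs of radius $\ep/4$ and applying Thm.~\ref{thm:rsw} to each yields $\P^\de(B^\de_{\ep',\ep})\lesssim\ep^{-1}(\ep'/\ep)^\lm$, uniformly in $\de$ (for $\ep\gtrsim\ep'\gtrsim\de$). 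On the complement, let $s^\de$ be the first time $\gam^\de$ comes within $\ep'$ of $J^\de$; since $\gam^\de(\tau)\to\gam(\tau)\in J$ and $J^\de\to J$ we have $s^\de\le\tau$ once $\de$ is small, and $\gam^\de$ then stays in $\ball[z_0^\de]{\ep}$ up to time $\tau^\de$, so $\gam[s^\de,\tau^\de]$ has diameter $\lesssim\ep$ (using $\gam^\de\approx\gam$). Since $\gam$ is parametrized by $\hcap$, the identities \eqref{eq:hcap.add}, \eqref{eq:lde.diam.est} and $\hcap A\le(\rad A)^2$ give
$$2(\tau^\de-s^\de)=\hcap\left(g_{s^\de}(\gam(s^\de,\tau^\de])\right)\lesssim(\diam\gam[s^\de,\tau^\de])(\diam\gam[0,\tau^\de])\lesssim\ep,$$
so $\tau^\de\le\tau+C\ep$ on $(B^\de_{\ep',\ep})^c$. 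Hence $\P^\de(\tau^\de>\tau+C\ep)\lesssim\ep^{-1}(\ep'/\ep)^\lm$ for $\de$ small; letting $\ep'\decto0$ and then $\ep\decto0$ gives $\tau^\de\to\tau$ in probability, and, passing to a further subsequence (legitimate since Propn.~\ref{ppn:admiss.cardy} is proved along the subsequence of Rmk.~\ref{rmk:skorohod}), $\tau^\de\to\tau$ a.s.

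\emph{Main obstacle.} The heart of the matter is the upper bound, and precisely the behavior named in the statement: $\gam^\de$ may shadow $J^\de$ arbitrarily closely, retreat a macroscopic distance, and cross only afterwards, so a pure continuity argument for $\gam$ fails. The resolution is the RSW bound above, whose two load-bearing ingredients are (a) that a forbidden retreat of Euclidean size $\ep$ really does produce an annulus crossing controlled by Thm.~\ref{thm:rsw}, which is where the interface structure of $\gam^\de$ enters, and (b) the quantitative input from the $\hcap$-parametrization (via \eqref{eq:lde.diam.est}) converting ``small Euclidean excursion near $J$'' into ``small increment of time,'' which is exactly why the lemma requires $\gam$ to be parametrized by half-plane capacity.
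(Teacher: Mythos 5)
Your overall skeleton (easy pathwise lower bound, RSW-type upper bound, plus an explicit capacity-to-time conversion via \eqref{eq:hcap.add} and \eqref{eq:lde.diam.est}, which is exactly what the paper only gestures at with ``so that capacity increases'') matches the paper's route, but the load-bearing probability estimate is not correct as you have set it up. On your event $B^\de_{\ep',\ep}$ the only thing you extract is that some segment of $\gam^\de$ traverses $\ann{z_0^\de}{\ep'}{\ep}$ and hence produces a monochromatic arm. That event is essentially certain, not rare: $z_0^\de$ is the \emph{random} point where the curve first comes within $\ep'$ of $J^\de$, and the curve automatically crosses the annulus around that point on its way in (indeed it does so even in the good scenario where it then crosses $J^\de$ immediately). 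The constraint that makes the event unlikely --- that the excursion back out to distance $\ep$ happens \emph{without crossing $J^\de$}, i.e.\ that the arms are confined to one side of $J^\de$ --- never enters your estimate. Your attempt to de-randomize $z_0^\de$ by a union bound over $O(1/\ep)$ discs of radius $\ep/4$ also does not work geometrically: a crossing of $\ann{z_0^\de}{\ep'}{\ep}$ only yields, around the nearest net point, a crossing of an annulus of inner radius $\asymp\ep$, so there is no small ratio; and if instead you take a net at spacing $\ep'$ (the scale actually needed), you have $\asymp 1/\ep'$ centers and the bound $(1/\ep')(\ep'/\ep)^{\lm}$ is useless unless $\lm>1$, which Thm.~\ref{thm:rsw} does not provide. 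So the claimed bound $\P^\de(B^\de_{\ep',\ep})\lesssim\ep^{-1}(\ep'/\ep)^\lm$ is unjustified, and with it the upper bound $\limsup\tau^\de\le\tau$.

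What is actually needed, and what the paper's (terse) proof points at, is a bound on the \emph{conditional} probability, given the exploration up to the first approach within $\ep'$ of $J^\de$, that the future interface exits $\ball[z_0^\de]{\ep}$ before reaching the far side of $J^\de$. This is where the ``without crossing'' constraint does the work: either one iterates, over the $\asymp\log(\ep/\ep')$ dyadic annuli between scales $\ep'$ and $\ep$, RSW/FKG-built blocking events of uniformly positive conditional probability (blue and yellow arcs anchored on $J^\de$ over the tip, which force the interface to cross $J^\de$ before escaping the next scale), giving a bound of the form $(\ep'/\ep)^{c}$; or one observes that touching within $\ep'$ and retreating to distance $\ep$ on one side of $J^\de$ forces a non-monochromatic three-arm event in the half-annulus on that side, whose universal exponent $2$ (cf.\ the boundary estimate of Lem.~\ref{lem:lsw.three.arm}) beats the $1/\ep'$ entropy of possible touch points. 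Either mechanism must appear explicitly; a single arm plus a coarse union bound cannot. Two smaller points: in your lower bound, $\gam[0,\tau-\eta]$ is \emph{not} at positive distance from $\pd D_0$ (the curve starts at $a\in\pd D_0$ and the limit curve touches $\pd D$); what you need and what is true is positive distance from the compact set $J$ (and hence from $J^\de$ for small $\de$). Also, the diameter bound \eqref{eq:lde.diam.est} lives in the half-plane uniformizing coordinates, so converting ``Euclidean diameter $\lesssim\ep$ in $D$'' into ``$\hcap$ increment $\lesssim\ep$'' requires a word about the conformal map; as written you conflate the two metrics.
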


We now show that curves in the  (subsequential) limiting percolation configuration are almost surely self-avoiding. The general method for proving such results (see e.g.\ \cite{\adapath}) is via {\it a priori} estimates on crossing events. By a {\bf non-monochromatic $k$-arm crossing} we mean $k$ disjoint crossings not all of the same color. Here is the estimate we will use to control the behavior of $\gam$ in the domain interior:

\begin{lem}[Kesten, Sidoravicius, Zhang {\cite[Lem.~5]{\kszwords}}]
\label{lem:ksz}
The $\P^\de$-probability of a non-monochromatic five-arm crossing of $\ann{z}{r}{R}$ is $\asymp (r/R)^2$ for all $R/r,\de/r \ge c$. Consequently the $\P^\de$-probability of a non-monochromatic six-arm crossing of $\ann{z}{r}{R}$ is $\lesssim (r/R)^{2+\lm}$, where $\lm$ is as in Thm.~\ref{thm:rsw}.
\end{lem}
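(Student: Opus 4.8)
The plan is to treat the two assertions separately. The two-sided five-arm estimate --- that the $\P^\de$-probability of a non-monochromatic five-arm crossing of $\ann{z}{r}{R}$ is $\asymp (r/R)^2$, uniformly for $R/r$ and $r/\de$ at least a constant --- is precisely \cite[Lem.~5]{\kszwords}, and I would import it verbatim. Behind it lie, for the lower bound, RSW-type constructions (Thm~\ref{thm:rsw}) together with the FKG inequality, and, for the upper bound, the arms-separation technology combined with a combinatorial count; the exponent $2$ is a ``universal'' (lattice-independent) arm exponent, and pinning it down exactly is the substantive point of the cited lemma. For the six-arm bound, and for the downstream application, only the upper half $\P^\de(\text{non-monochromatic five-arm}) \lesssim (r/R)^2$ is needed.

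Granting this, the six-arm bound follows by a short peeling argument. Suppose a non-monochromatic six-arm crossing of $\ann{z}{r}{R}$ occurs, realised by six pairwise disjoint monochromatic hexagonal arms across the annulus whose colours are not all equal. Since there are six arms and both colours appear, one colour --- say blue --- is carried by at least three of them; delete one blue arm. The deleted arm is by itself a blue crossing of $\ann{z}{r}{R}$, and the five surviving arms are still pairwise disjoint and still not all of one colour (at least two blue and at least one yellow remain). Taking the hexagons of the five surviving arms (with their colours) as a witness for the event $\mathcal{A}_5 \equiv \{\text{non-monochromatic five-arm crossing of } \ann{z}{r}{R}\}$, and, disjointly, the hexagons of the deleted arm as a witness for the event $\mathcal{C} \equiv \{\text{blue crossing of } \ann{z}{r}{R}\}$, we see that the configuration admits disjoint witnesses for $\mathcal{A}_5$ and $\mathcal{C}$, i.e.\ lies in $\mathcal{A}_5 \square \mathcal{C}$; if yellow is the majority colour it lies instead in $\mathcal{A}_5 \square \mathcal{C}'$, with $\mathcal{C}'$ the yellow-crossing event. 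Thus the non-monochromatic six-arm event is contained in $(\mathcal{A}_5 \square \mathcal{C}) \cup (\mathcal{A}_5 \square \mathcal{C}')$. Since in a discrete annulus all of these events depend on only finitely many hexagons, Reimer's inequality \cite{\reimer} (the extension of the BK inequality to arbitrary events) applies and gives $\P^\de(\mathcal{A}_5 \square \mathcal{C}) \le \P^\de(\mathcal{A}_5)\,\P^\de(\mathcal{C})$, and likewise for $\mathcal{C}'$. Feeding in the five-arm bound and Thm~\ref{thm:rsw} (with colour symmetry, so $\P^\de(\mathcal{C}') = \P^\de(\mathcal{C}) \lesssim (r/R)^\lm$), and absorbing the factor $2$ from the union bound, yields $\P^\de(\text{non-monochromatic six-arm crossing}) \lesssim (r/R)^2 (r/R)^\lm = (r/R)^{2+\lm}$, uniformly over the stated range.

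The main obstacle sits entirely in the imported five-arm estimate: the whole weight of the lemma is that the five-arm exponent is \emph{exactly} $2$, which requires the arms-separation machinery and a careful combinatorial argument, so I would quote \cite{\kszwords} rather than reconstruct it. The six-arm deduction is otherwise routine; the one point deserving a word of care is that the deleted arm and the five surviving arms genuinely furnish \emph{disjoint} witnesses, which is automatic because the six arms were chosen pairwise disjoint from the start. It is also worth noting that the BK inequality alone does not suffice here --- $\mathcal{A}_5$ is neither increasing nor decreasing --- so Reimer's generalisation is genuinely being used.
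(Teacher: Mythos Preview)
Your proposal is correct and follows the same two-step outline as the paper: import the five-arm exponent from \cite{\kszwords}, then peel off one arm and combine with the one-arm RSW bound. The paper's entire proof is the single sentence ``the six-arm exponent follows directly from Thm.~\ref{thm:rsw} and the BK inequality.''

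The one substantive difference is that you invoke Reimer's inequality rather than BK, and you are right to flag this. As stated in the paper, the BK inequality applies only to \emph{increasing} events, whereas the non-monochromatic five-arm event $\mathcal{A}_5$ is neither increasing nor decreasing, so the disjoint-occurrence bound $\P^\de(\mathcal{A}_5 \square \mathcal{C}) \le \P^\de(\mathcal{A}_5)\,\P^\de(\mathcal{C})$ does not follow from BK in the form given. Reimer's extension \cite{\reimer} (which the paper mentions immediately after stating BK) covers arbitrary events and is the correct tool here. So your argument is not a different route but a more careful execution of the same route; the paper's one-liner is slightly loose on this point.
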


For the proof of the five-arm exponent see \cite{\kszwords}; the six-arm exponent follows directly from Thm.~\ref{thm:rsw} and the BK inequality. Controlling the behavior of $\gam$ at the domain boundary is more subtle, and the following is the estimate we will require:

\begin{lem}[Lawler, Schramm, Werner {\cite[Appx.~A]{\lswonearm}}]
\label{lem:lsw.three.arm}
Let $\Gam$ be a fixed smooth closed contour inside $D$, and let $z\in \pd D$. Then there exists a constant $c\equiv c(\Gam)$ such that the $\P^\de$-probability of a non-monochromatic three-arm crossing from $\ball[z]{r}$ to $\Gam$ within $D$ is $\le c r^2$.
\end{lem}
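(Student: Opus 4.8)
This is the boundary (half-plane) three-arm estimate, whose exponent equals~$2$; in parallel with the interior five-arm exponent of Lemma~\ref{lem:ksz}, it is one of the two arm exponents for two-dimensional percolation that are \emph{universal}, i.e.\ derivable from the RSW bound (Theorem~\ref{thm:rsw}) together with the FKG and BK inequalities, with no appeal to conformal invariance. The plan has three parts: localize the estimate to a half-annulus centered at the boundary point; reduce to a product over dyadic scales via RSW-type arm-separation; and show that the resulting per-scale contribution pins the exponent to~$2$.

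First I would localize. Translating so that $z=0$ and fixing $\rho$ smaller than the distance from $\Gam$ to $\pd D$, a non-monochromatic three-arm crossing from $\ball[0]{r}$ to $\Gam$ within $D$ forces three disjoint arms, not all of the same color, crossing the half-annulus $\ann{0}{r}{\rho}\cap D$; since $D$ lies on one side of $\pd D$ near $z$ and --- by admissibility of $D$ together with RSW applied at each dyadic scale between $r$ and $\rho$ --- the piece of $\pd D$ near $z$ behaves like a one-sided boundary at every such scale, a standard combinatorial straightening reduces the problem, at the cost of universal multiplicative constants, to bounding the probability $\P^\de_3(r,\rho)$ of a non-monochromatic three-arm crossing of the \emph{half-plane} annulus $\ann{0}{r}{\rho}\cap\H$ carrying the canonical alternating color pattern (blue--yellow--blue in the cyclic order around $0$ in~$\H$). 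It then suffices to prove $\P^\de_3(r,\rho)\lesssim(r/\rho)^2$, after which the bound $\le c(\Gam)\,r^2$ follows since $\rho$ is of constant order.

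Next, RSW and the BK inequality yield the usual separation-of-arms and quasi-multiplicativity estimates for half-plane arm events, so that $\P^\de_3(r,\rho)\asymp\prod_k\P^\de_3(2^kr,2^{k+1}r)$ over the dyadic scales between $r$ and $\rho$; this alone gives $\P^\de_3(r,\rho)\lesssim(r/\rho)^{\beta}$ for \emph{some} $\beta>0$, but not the value $\beta=2$. Identifying $\beta=2$ is the crux, and I expect it to be the main obstacle. Here I would follow the argument of Lawler, Schramm and Werner \cite{\lswonearm} (in the spirit of Kesten's scaling-relation arguments): the non-monochromatic half-plane three-arm event with the arms in alternating cyclic order is equivalent, up to bounded factors, to a ``boundary pivotality'' event --- the box $\ball[0]{r}$ is pivotal for a crossing of a macroscopic half-annulus --- and, exploiting the cyclic ordering of the arms together with RSW gluing, one shows that the forced presence of the ``middle'' yellow arm between the two blue arms contributes an extra factor beyond the half-plane two-arm exponent (which is~$1$), pinning the exponent to $1+1=2$. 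Assembling the three parts gives the claim. Besides this last step, the other delicate point is the reduction to a genuine half-plane boundary for a domain $D$ whose boundary is merely a continuous curve (a slit domain, in the application of \S\ref{ssec:explore.pf}): here the admissibility hypothesis and the uniformity of RSW at all scales down to the mesh $\de$ are what make the straightening legitimate.
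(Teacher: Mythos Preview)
The paper gives no proof here: immediately after the statement it says only that ``this is not precisely the estimate which is proved in \cite{\lswonearm} but we leave the reader to check that it follows from a slight modification of their argument.'' So there is nothing in-paper to compare against; your sketch is an attempt to unpack the cited result and to supply that modification.

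Your three-part plan has the right shape and is in line with how the universal half-plane three-arm exponent is established. Two remarks. First, step~3 as you describe it is a heuristic, not a proof: the assertion that the middle arm ``contributes an extra factor beyond the half-plane two-arm exponent, pinning the exponent to $1+1=2$'' is not the mechanism in \cite{\lswonearm}. The argument there is a direct counting identity (boundary sites carrying the alternating three-arm event are identified with points of a specific combinatorial type whose expected number in a macroscopic box is controlled by RSW), not an additive decomposition of exponents. Since you explicitly defer to \cite{\lswonearm} for this step you are in the end doing exactly what the paper does; just be aware that your paraphrase of what that argument actually does is off. Second, your step~1 is precisely the ``slight modification'' the paper alludes to, and you are right to flag it as the delicate point. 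The observation that makes it work is that the two branches of $\pd D$ emanating from $z$ must each cross every circle $\circl[z]{\rho}$ for $\rho$ up to $\dist(\Gam,\pd D)$ (since $\pd D$ is a closed curve through $z$ and cannot lie entirely inside such a ball without forcing $\Gam\subset D$ too close to $z$); hence at every dyadic scale the three arms are confined to a topological half-annulus bounded by pieces of $\pd D$, and the RSW/separation machinery then allows comparison with the straight half-plane event scale by scale, as you suggest.
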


This is not precisely the estimate which is proved in \cite{\lswonearm} but we leave the reader to check that it follows from a slight modification of their argument. We turn now to the proof of Propn.~\ref{ppn:admiss.cardy}.

\begin{proof}[Proof of Propn.~\ref{ppn:admiss.cardy}]
Each time $\gam$ intersects a previous part of its path or the boundary of $D$, a region is ``sealed off'' (disconnected from $b$), and to show that $\gam$ is self-avoiding we must show that it does not dive into sealed-off regions.

\begin{enumerate}[(1)]
\item {\it No triple points in interior.} If $\gam$ enters a sealed-off region in the interior of the domain, then $\gam$ must have a triple visit at the entry point. This implies that for some $\ep>0$ and for all $\ep'<\ep$, $\gam^\de$ has a six-fold crossing of some annulus $\ann{z}{\ep'}{\ep}$, $z\in D$, for sufficiently small $\de$. Fig.~\ref{fig:six.arm} shows two topologically distinct ways in which this can occur: although the only situation which concerns us is that of Fig.~\ref{fig:six.arm.dive}, we will eliminate both possibilities. Since $\gam^\de$ has blue to the left and yellow to the right, in all cases the six-fold crossing implies a non-monochromatic six-arm crossing of $\ann{z}{\ep'}{\ep}$. Moreover the annuli can be taken to lie on a square grid of side length $\asymp \ep'$, so summing over $\lesssim 1/(\ep')^2$ annuli and applying Lem.~\ref{lem:ksz} we find that the probability of such an event is $\lesssim (\ep')^\lm/\ep^{2+\lm}$. Taking first $\ep'\decto0$ and then $\ep'\decto0$ shows that there are no interior triple points.

\begin{figure}
\centering
\subfloat[Triple point without dive]{%
\includegraphics[height=0.4\textwidth]{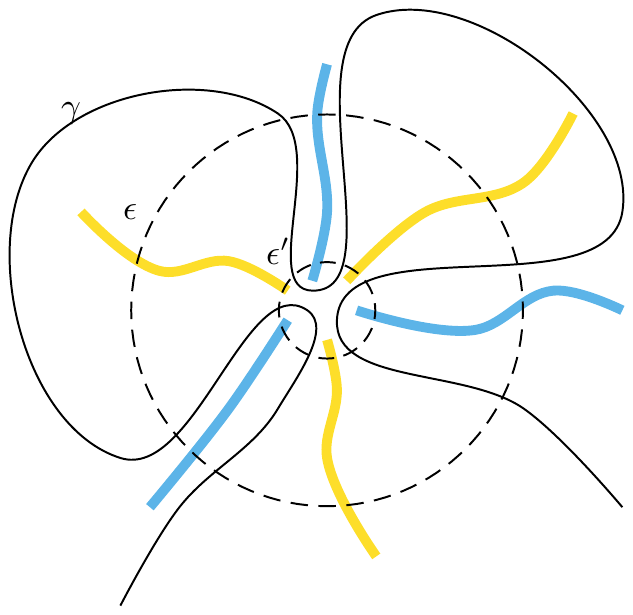}}
\quad
\subfloat[Triple point with dive]{\label{fig:six.arm.dive}%
\includegraphics[height=0.4\textwidth]{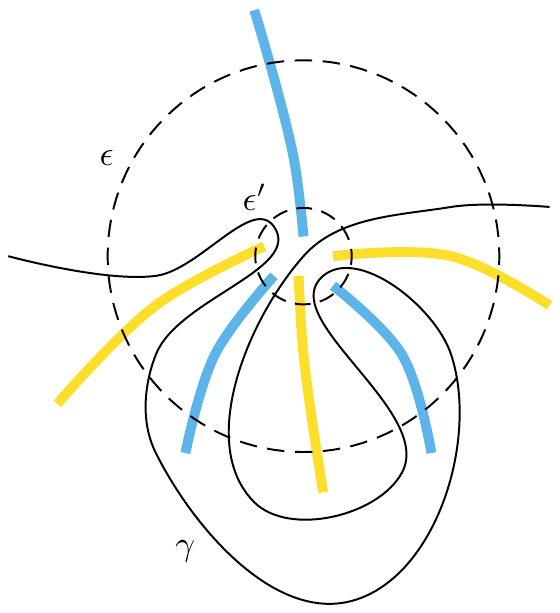}}
\caption{Interior triple point and six-arm event}
\label{fig:six.arm}
\vspace*{-12pt}
\end{figure}

\item {\it No double points on boundary.} If $\gam$ enters a sealed-off region at the domain boundary, then $\gam$ must have a double visit at the entry point, and we claim this does not occur. Let $\vph$ be a conformal map $\D\to D$ and let $\Gam_\ep\equiv\vph(\circl{1-\ep})$; note that $\dist(\Gam_\ep,\pd D)\decto0$ as $\ep\decto0$ by a simple compactness argument. Thus, arguing similarly as above we see that a boundary double point implies that for some $\ep>0$ and all $\ep'<\ep$, there is a non-monochromatic three-arm crossing from an $\ep'$-neighborhood of $\pd D$ to $\Gam_\ep$ for sufficiently small $\de$. But $\pd D$ has Minkowski dimension strictly less than two (Rmk.~\ref{rmk:minkowski}), so the claim follows by partitioning $\pd D$ into segments of diameter $\le\ep'$ and applying Lem.~\ref{lem:lsw.three.arm} with $\Gam = \Gam_\ep$.

\item {\it No retracing.} We claim $\gam$ does not trace any non-trivial segment of the boundary or of its past hull. Retracing implies lengthwise crossings as $\de\decto0$ of four-marked domains which are conformally equivalent to $\ep'\times\ep$ rectangles for fixed $\ep$ and all $\ep'<\ep$. The probability of such a crossing is $\lesssim \al^{\ep/\ep'}$ for some constant $\al<1$. Summing over $\lesssim 1/(\ep')^2$ such domains (covering $\gam$ and $\pd D$) gives the result.
\end{enumerate}
The entire argument above applies equally to $\eta$ so we find that both $\gam,\eta$ are self-avoiding. Combining this with the result of Lem.~\ref{lem:crosscut} gives that
$(D^\de \setminus K^\de;\gam(\tau^\de),b^\de) \cconv
(D\setminus K;\gam(\tau),b)$,
and the law of $K$ is determined by Cardy's formula by Thm.~\ref{thm:smirnov.gen}.
\end{proof}
We now conclude the proof of the main theorem:

\begin{proof}[Proof of Thm.~\ref{thm:explore}]
By Propn.~\ref{ppn:admiss.cardy} the limit $\gam$ is self-avoiding, so it remains to show that the $\ep$-filling sequence satisfies the characterization of Propn.~\ref{ppn:char.sle6}.

Let $f:\H_2\to D_2$, $f^\de : \H_2\to D_2^\de$ be as above. Let $J_1 = f(\circl{\ep}^+)$ and $J_1^\de = f^\de \circ f^{-1}(J_1)$.
Let $x_1^\de$ denote the tip of $K_1^\de$ (the point where $\gam^\de$ crosses $J^\de$), and $x_1$ the tip of $K_1$ (its point of intersection with $J$), and define
$$D_2^\de(1) \equiv (D_2^\de\setminus K_1^\de;x_1^\de,b^\de),\quad
D_2(1) \equiv (D_2\setminus K_1; x_1,b).$$
By Propn.~\ref{ppn:admiss.cardy}, the law of $K_1$ is determined by Cardy's formula, and further $D_2^\de(1)\cconv D_2(1)$ which means that we can repeat the construction in the new admissible domain $D_2(1)$. Continuing in this way the result is proved.
\end{proof}

\section{Conclusion}
\label{ch:end}

Since the introduction of $\sle$ in \cite{\schramm}, a number of discrete interface models have been shown to have scaling limits which are versions of $\sle$. We have focused on a particular model throughout this article, so we conclude with a listing of other examples. The following results are proved:
\begin{itemize}
\item $\sle_2$ is the scaling limit of the loop-erased random walk, and $\sle_8$ is the scaling limit of the uniform spanning tree Peano path (Lawler, Schramm, Werner \cite{\schramm, \lswlerw}).

\item $\sle_6$ is the scaling limit of the percolation exploration path (Smirnov \cite{\smirnovperc, \smirnovpercarxiv}, Camia and Newman \cite{\cmnw}). The scaling limit of the full configuration is determined in \cite{\cmnwfull}.

The outer boundary of $\sle_6$ is $\sle_{8/3}$, which is also the outer boundary of a certain reflected Brownian motion \cite{\lswrestr}; for a discussion of the connection see \cite{\lwuniv}.

\item $\sle_4$ is the scaling limit of the path of the harmonic explorer and of the contour lines of the two-dimensional discrete Gaussian free field (GFF) with certain boundary conditions (Schramm and Sheffield \cite{\ssexplorer,\ssdgff}, extended to more general models by Miller \cite{\millergl,\milleruniv}). There is also a well-defined sense in which $\sle_4$ is a contour line of the two-dimensional continuum GFF, again with certain boundary conditions \cite{\ssgff}. In forthcoming work Miller and Sheffield \cite{\millersheffield} prove that the collection of \emph{all} discrete GFF contours at certain heights converge to $\cle_4$.

\item $\sle_3$ is the scaling limit of interfaces in the critical Ising spin model, and $\sle_{16/3}$ is the scaling limit of interfaces in the corresponding FK cluster representation with parameters $q=2$, $p=1-e^{-2\be_c}$ \cite{\smirnovtowards,\smirnovrci,\smirnovrcii,\ksrciii,\chelkaksmirnovuniv,\chelkaksmirnovising}.
\end{itemize}
The relationship between $\ka$ and $16/\ka$ that is evident in this list is a manifestation of the {\bf Duplantier duality}; see \cite{\dubedatdual,\dubedatdualii,\zhan,\zhanii}.

The following results are conjectured; for more see \cite[\S9]{\rohdeschramm}:
\begin{itemize}
\item Chordal $\sle_{8/3}$ is the scaling limit of the self-avoiding walk in a half-plane $\de\Lm\cap\clos\H$ started at $0$ \cite{\lswrestr}. For recent progress see \cite{\dcssaw}.

\item $\sle_4$ is the scaling limit of loops in the double dimer model. Recent support for this conjecture can be found in \cite{\kwdimers}. Results on the conformal invariance of dimers are in \cite{\kenyon,\kenyongff,\kenyonloops}.

\item The uniform measure on simple grid paths joining a boundary point to an interior point (joining two fixed boundary points) converges to radial (chordal) $\sle_8$ \cite[\S9]{\rohdeschramm}.

\item $\sle_{\ka(q)}$ is the scaling limit of boundaries in the critical FK cluster model with parameters $q$ and $p=p(q)=\sqrt{q}/(1+\sqrt{q})$, where $q \in(0,4)$ and $\ka(q) = 4\pi/\arccos(-\sqrt{q}/2) \in (4,8)$ \cite[\S9]{\rohdeschramm}. (The case $q=2$, corresponding to $\ka=16/3$, has been solved by Smirnov.)

\item $\sle_\ka$ is the scaling limit of a form of the $O(n)$ loop model for $8/3\le\ka\le8$, $n = -2\cos(4\pi/\ka)$ \cite{\kagernienhuis,\sheffieldtreescle}.

\item Loops in the critical XOR-Ising model (the product of two independent critical Ising spin configurations), and more generally in the critical Ashkin-Teller model, converge to a form of $\sle_4$ \cite{\wilsonxor,\ikhlefrajabpourat}.
\end{itemize}


\begin{thebibliography}{000}
\vspace*{-6pt}

\bibitem{MR510197}
\textsc{Ahlfors, L.~V.} (1978).
\newblock \emph{Complex analysis}, Third ed.
\newblock McGraw-Hill Book Co., New York.
\newblock An introduction to the theory of analytic functions of one complex
  variable, International Series in Pure and Applied Mathematics.
\MR{510197 (80c:30001)}

\bibitem{MR2730573}
\textsc{Ahlfors, L.~V.} (2010).
\newblock \emph{Conformal invariants}.
\newblock AMS Chelsea Publishing, Providence, RI.
\newblock Topics in geometric function theory, Reprint of the 1973 original,
  With a foreword by Peter Duren, F. W. Gehring and Brad Osgood.
\MR{2730573}

\bibitem{MR1431856}
\textsc{Aizenman, M.} (1997).
\newblock On the number of incipient spanning clusters.
\newblock \emph{Nuclear Phys. B\/}~\textbf{485},~3, 551--582.
\href{http://dx.doi.org/10.1016/S0550-3213(96)00626-8}{http://dx.doi.org/10.1016/S0550-}\break
\href{http://dx.doi.org/10.1016/S0550-3213(96)00626-8}{3213(96)00626-8}.
\MR{1431856 (98j:82030)}

\bibitem{MR1635999}
\textsc{Aizenman, M.} (1998).
\newblock Scaling limit for the incipient spanning clusters.
\newblock In \emph{Mathematics of multiscale materials ({M}inneapolis, {MN},
  1995--1996)}. IMA Vol. Math. Appl., Vol.~\textbf{99}. Springer, New York,
  1--24.
\MR{1635999 (99g:82034)}

\bibitem{MR1712629}
\textsc{Aizenman, M.} \textsc{and} \textsc{Burchard, A.} (1999).
 H\"older regularity and dimension bounds for random curves.
 \emph{Duke Math. J.\/} \textbf{99}, 3, 419--453.
\href{http://dx.doi.org/10.1215/S0012-7094-99-09914-3}{http://}\break
\href{http://dx.doi.org/10.1215/S0012-7094-99-09914-3}{dx.doi.org/10.1215/S0012-7094-99-09914-3}.
\MR{1712629 (2000i:60012)}

\bibitem{PhysRevLett.83.1359}
\textsc{Aizenman, M.}, \textsc{Duplantier, B.}, \textsc{and} \textsc{Aharony,
  A.} (1999).
\newblock Path-crossing exponents and the external perimeter in 2{D} percolation.
\newblock \emph{Phys. Rev. Lett.\/}~\textbf{83},~7 (Aug), 1359--1362.

\bibitem{MR901151}
\textsc{Aizenman, M.}, \textsc{Kesten, H.}, \textsc{and} \textsc{Newman, C.~M.}
  (1987).
\newblock Uniqueness of the infinite cluster and continuity of connectivity
  functions for short and long range percolation.
\newblock \emph{Comm. Math. Phys.\/}~\textbf{111},~4, 505--531.
\newblock
\url{http://projecteuclid.org/getRecord?id=euclid.cmp/1104159720}.\\
\MR{901151 (89b:82060)}

\bibitem{MR2073175}
\textsc{Alon, N.}, \textsc{Benjamini, I.}, \textsc{and} \textsc{Stacey, A.}
  (2004).
\newblock Percolation on finite graphs and isoperimetric inequalities.
\newblock \emph{Ann. Probab.\/}~\textbf{32},~3A, 1727--1745.
\newblock \url{http://dx.doi.org/10.1214/009117904000000414}.
\MR{2073175 (2005f:05149)}

\bibitem{MR2078552}
\textsc{Beffara, V.} (2004).
\newblock Hausdorff dimensions for {$\mathrm{SLE}_6$}.
\newblock \emph{Ann. Probab.\/} \textbf{32},~3B, 2606--2629.
\newblock \url{http://dx.doi.org/10.1214/009117904000000072}.
\MR{2078552 (2005k:60295)}

\bibitem{MR2310300}
\textsc{Beffara, V.} (2007).
\newblock Cardy's formula on the triangular lattice, the easy way.
\newblock In \emph{Universality and renormalization}. Fields Inst. Commun.,
  Vol.~\textbf{50}. Amer. Math. Soc., Providence, RI, 39--45.
\MR{2310300 (2008i:82030)}

\bibitem{MR2435854}
\textsc{Beffara, V.} (2008a).
 The dimension of the {SLE} curves.
 \emph{Ann. Probab.\/} \textbf{36},~4, 1421--1452.
 \url{http://dx.doi.org/10.1214/07-AOP364}.\\
\MR{2435854 (2009e:60026)}

\bibitem{MR2477376}
\textsc{Beffara, V.} (2008b).
\newblock Is critical 2{D} percolation universal?
\newblock In \emph{In and out of equilibrium. 2}. Progr. Probab.,
  Vol.~\textbf{60}. Birkh\"auser, Basel, 31--58.
\newblock \url{http://dx.doi.org/10.1007/978-3-7643-8786-0_3}.
\MR{2477376 (2010h:60262)}

\bibitem{arXiv:1107.0158}
\textsc{Beffara, V.} \textsc{and} \textsc{Duminil-Copin, H.} (2011).
\newblock Planar percolation with a glimpse of {S}chramm-{L}oewner evolution.
\newblock ArXiv preprint.

\bibitem{arXiv:1105.2638}
\textsc{Benjamini, I.} \textsc{and} \textsc{Kozma, G.} (2011).
\newblock $\mathbb{Z}$-actions and uniqueness of percolation.
\newblock ArXiv preprint.

\bibitem{MR1733151}
\textsc{Benjamini, I.}, \textsc{Lyons, R.}, \textsc{Peres, Y.}, \textsc{and}
  \textsc{Schramm, O.} (1999a).
\newblock Critical percolation on any nonamenable group has no infinite
  clusters.
\newblock \emph{Ann. Probab.\/}~\textbf{27},~3, 1347--1356.
\newblock \url{http://dx.doi.org/10.1214/aop/1022677450}.
\MR{1733151 (2000k:60197)}

\bibitem{MR1675890}
\textsc{Benjamini, I.}, \textsc{Lyons, R.}, \textsc{Peres, Y.}, \textsc{and}
  \textsc{Schramm, O.} (1999b).
\newblock Group-invariant percolation on graphs.
\newblock \emph{Geom. Funct. Anal.\/}~\textbf{9},~1, 29--66.
\newblock \url{http://dx.doi.org/10.1007/s000390050080}.
\MR{1675890 (99m:60149)}

\bibitem{MR1646475}
\textsc{Benjamini, I.} \textsc{and} \textsc{Schramm, O.} (1998).
\newblock Conformal invariance of {V}oronoi percolation.
\newblock \emph{Comm. Math. Phys.\/}~\textbf{197},~1, 75--107.
\url{http://dx.doi.org/10.1007/s002200050443}.
\MR{1646475 (99i:60172)}

\bibitem{MR1700749}
\textsc{Billingsley, P.} (1999).
\emph{Convergence of probability measures}, Second ed.
Wiley Series in Probability and Statistics: Probability and
  Statistics. John Wiley \& Sons Inc., New York.
A Wiley-Interscience Publication,
  \url{http://dx.doi.org/10.1002/9780470316962}.
\MR{1700749 (2000e:60008)}

\bibitem{MR2726646}
\textsc{Binder, I.}, \textsc{Chayes, L.}, \textsc{and} \textsc{Lei, H.~K.}
  (2010a).
On convergence to~{$\mathrm{SLE}_6$} {I}: conformal invariance for
  certain models of the bond-trian\-gular type.
\emph{J. Stat. Phys.\/} \textbf{141},~2, 359--390.
\href{http://dx.doi.org/10.1007/s10955-010-0052-3}{http://dx.doi.org/}\break
\href{http://dx.doi.org/10.1007/s10955-010-0052-3}{10.1007/s10955-010-0052-3}.
\MR{2726646}

\bibitem{MR2726647}
\textsc{Binder, I.}, \textsc{Chayes, L.}, \textsc{and} \textsc{Lei, H.~K.}
  (2010b).
On convergence to {$\mathrm{SLE}_6$} {II}: discrete approximations
  and extraction of {C}ardy's formula for general domains.
\emph{J. Stat. Phys.\/}~\textbf{141},~2, 391--408.
\href{http://dx.doi.org/10.1007/s10955-010-0053-2}{http://dx.doi.org/}\break
\href{http://dx.doi.org/10.1007/s10955-010-0053-2}{10.1007/s10955-010-0053-2}.
\MR{2726647}

\bibitem{MR1864966}
\textsc{Bollob{\'a}s, B.} (2001).
\emph{Random graphs}, Second ed. Cambridge Studies in Advanced
  Mathematics, Vol.~\textbf{73}.
Cambridge University Press, Cambridge.
\MR{1864966 (2002j:05132)}
\eject

\bibitem{MR1261058}
\textsc{Bollob{\'a}s, B.} \textsc{and} \textsc{Kohayakawa, Y.} (1994).
Percolation in high dimensions.
\emph{European J. Combin.\/}~\textbf{15},~2, 113--125.
\href{http://dx.doi.org/10.1006/eujc.1994.1014}{http://dx.doi.org/10.1006/}\break
\href{http://dx.doi.org/10.1006/eujc.1994.1014}{eujc.1994.1014}.
\MR{1261058 (95c:60092)}

\bibitem{MR2283880}
\textsc{Bollob{\'a}s, B.} \textsc{and} \textsc{Riordan, O.} (2006).
\emph{Percolation}.
Cambridge University Press, New York.
\MR{2283880 (2008c:82037)}

\bibitem{MR1868996}
\textsc{Borgs, C.}, \textsc{Chayes, J.~T.}, \textsc{Kesten, H.}, \textsc{and}
  \textsc{Spencer, J.} (2001).
The birth of the infinite cluster: finite-size scaling in
  percolation.
\emph{Comm. Math. Phys.\/}~\textbf{224},~1, 153--204.
Dedicated to Joel L. Lebowitz,
  http://dx.doi.org/10.1007/s002200100521.
\MR{1868996 (2002k:60199)}

\bibitem{MR2155704}
\textsc{Borgs, C.}, \textsc{Chayes, J.~T.}, \textsc{van~der Hofstad, R.},
  \textsc{Slade, G.}, \textsc{and} \textsc{Spencer, J.} (2005a).
Random subgraphs of finite graphs. {I}. {T}he scaling window under
  the triangle condition.
\emph{Random Structures Algorithms\/}~\textbf{27},~2, 137--184.
\url{http://dx.doi.org/10.1002/rsa.20051}.
\MR{2155704 (2006e:05156)}

\bibitem{MR2165583}
\textsc{Borgs, C.}, \textsc{Chayes, J.~T.}, \textsc{van~der Hofstad, R.},
  \textsc{Slade, G.}, \textsc{and} \textsc{Spencer, J.} (2005b).
Random subgraphs of finite graphs. {II}. {T}he lace expansion and the
  triangle condition.
\emph{Ann. Probab.\/}~\textbf{33},~5, 1886--1944.
\url{http://dx.doi.org/10.1214/009117905000000260}.
\MR{2165583 (2006j:60009)}

\bibitem{MR2260845}
\textsc{Borgs, C.}, \textsc{Chayes, J.~T.}, \textsc{van~der Hofstad, R.},
  \textsc{Slade, G.}, \textsc{and} \textsc{Spencer, J.} (2006).
Random subgraphs of finite graphs. {III}. {T}he phase transition for
  the {$n$}-cube.
\emph{Combinatorica\/}~\textbf{26},~4, 395--410.
\url{http://dx.doi.org/10.1007/s00493-006-0022-1}.
\MR{2260845 (2007k:05194)}

\bibitem{MR0091567}
\textsc{Broadbent, S.~R.} \textsc{and} \textsc{Hammersley, J.~M.} (1957).
Percolation processes. {I}. {C}rystals and mazes.
\emph{Proc. Cambridge Philos. Soc.\/}~\emph{53}, 629--641.
\MR{0091567 (19,989e)}

\bibitem{MR990777}
\textsc{Burton, R.~M.} \textsc{and} \textsc{Keane, M.} (1989).
Density and uniqueness in percolation.
\emph{Comm. Math. Phys.\/}~\textbf{121},~3, 501--505.
\href{http://projecteuclid.org/getRecord?id=euclid.cmp/1104178143}{http://projecteuclid.}\break
\href{http://projecteuclid.org/getRecord?id=euclid.cmp/1104178143}{org/getRecord?id=euclid.cmp/1104178143}.
\MR{990777 (90g:60090)}

\bibitem{MR2249794}
\textsc{Camia, F.} \textsc{and} \textsc{Newman, C.~M.} (2006).
Two-dimensional critical percolation: the full scaling limit.
\emph{Comm. Math. Phys.\/}~\textbf{268},~1, 1--38.
\url{http://dx.doi.org/10.1007/s00220-006-0086-1}.
\MR{2249794 (2007m:82032)}

\bibitem{MR2322705}
\textsc{Camia, F.} \textsc{and} \textsc{Newman, C.~M.} (2007).
Critical percolation exploration path and {$\mathrm{SLE}_6$}: a proof
  of convergence.
\emph{Probab. Theory Related Fields\/}~\textbf{139},~3-4, 473--519.
\url{http://dx.doi.org/10.1007/s00440-006-0049-7}.
\MR{2322705 (2008k:82040)}

\bibitem{MR1511737}
\textsc{Carath{\'e}odory, C.} (1913).
\"{U}ber die {B}egrenzung einfach zusammenh\"angender {G}ebiete.
\emph{Math. Ann.\/}~\textbf{73},~3, 323--370.
\href{http://dx.doi.org/10.1007/BF01456699}{http://dx.doi.org/}\break
\href{http://dx.doi.org/10.1007/BF01456699}{10.1007/BF01456699}.
\MR{1511737}

\bibitem{arXiv:math-ph/0103018}
\textsc{Cardy, J.} (2001).
Conformal invariance and percolation.
ArXiv preprint.

\bibitem{MR1151081}
\textsc{Cardy, J.~L.} (1992).
Critical percolation in finite geometries.
\emph{J. Phys. A\/}~\textbf{25},~4, L201--L206.
\url{http://stacks.iop.org/0305-4470/25/L201}.
\MR{1151081 (92m:82048)}

\bibitem{MR2337476}
\textsc{Chayes, L.} \textsc{and} \textsc{Lei, H.~K.} (2007).
Cardy's formula for certain models of the bond-triangular type.
\emph{Rev. Math. Phys.\/}~\textbf{19},~5, 511--565.
\url{http://dx.doi.org/10.1142/S0129055X0700305X}.
\MR{2337476 (2008j:82019)}
\eject

\bibitem{arXiv:0910.2045}
\textsc{Chelkak, D.} \textsc{and} \textsc{Smirnov, S.} (2009).
Universality in the 2{D} {I}sing model and conformal invariance of
  fermionic observables.
ArXiv preprint.

\bibitem{chelkak.smirnov:ising}
\textsc{Chelkak, D.} \textsc{and} \textsc{Smirnov, S.} (2010).
Conformal invariance of the 2{D} {I}sing model at criticality.
In preparation.

\bibitem{RSA:RSA20342}
\textsc{Ding, J.}, \textsc{Kim, J.~H.}, \textsc{Lubetzky, E.}, \textsc{and}
  \textsc{Peres, Y.} (2011).
Anatomy of a young giant component in the random graph.
\emph{Random Structures Algorithms\/}~\textbf{39},~2, 139--178.
\url{http://dx.doi.org/10.1002/rsa.20342}.

\bibitem{MR2118865}
\textsc{Dub{\'e}dat, J.} (2005).
{${\rm SLE}(\kappa,\rho)$} martingales and duality.
\emph{Ann. Probab.\/}~\textbf{33},~1, 223--243.
\url{http://dx.doi.org/10.1214/009117904000000793}.
\MR{2118865 (2005j:60180)}

\bibitem{MR2226888}
\textsc{Dub{\'e}dat, J.} (2006).
Excursion decompositions for {SLE} and {W}atts' crossing formula.
\emph{Probab. Theory Related Fields\/}~\textbf{134},~3, 453--488.
\url{http://dx.doi.org/10.1007/s00440-005-0446-3}.
\MR{2226888 (2007d:60019)}

\bibitem{MR2571956}
\textsc{Dub{\'e}dat, J.} (2009).
Duality of {S}chramm-{L}oewner evolutions.
\emph{Ann. Sci. \'Ec. Norm. Sup\'er. (4)\/}~\textbf{42},~5, 697--724.
\MR{2571956 (2011g:60151)}

\bibitem{MR0230338}
\textsc{Dudley, R.~M.} (1968).
Distances of probability measures and random variables.
\emph{Ann. Math. Statist\/}~\emph{39}, 1563--1572.
\MR{0230338 (37 \#5900)}

\bibitem{arXiv:1007.0575}
\textsc{Duminil-Copin, H.} \textsc{and} \textsc{Smirnov, S.} (2010).
The connective constant of the honeycomb lattice equals
  $\sqrt{2+\sqrt2}$.
ArXiv preprint.

\bibitem{arXiv:1109.1549}
\textsc{Duminil-Copin, H.} \textsc{and} \textsc{Smirnov, S.} (2011).
Conformal invariance of lattice models.
ArXiv preprint.

\bibitem{MR614728}
\textsc{Epstein, D. B.~A.} (1981).
Prime ends.
\emph{Proc. London Math. Soc. (3)\/}~\textbf{42},~3, 385--414.
\url{http://dx.doi.org/10.1112/plms/s3-42.3.385}.
\MR{614728 (83c:30025)}

\bibitem{MR0309498}
\textsc{Fortuin, C.~M.}, \textsc{Kasteleyn, P.~W.}, \textsc{and}
  \textsc{Ginibre, J.} (1971).
Correlation inequalities on some partially ordered sets.
\emph{Comm. Math. Phys.\/}~\emph{22}, 89--103.
\MR{0309498 (46 \#8607)}

\bibitem{MR1707339}
\textsc{Grimmett, G.} (1999).
\emph{Percolation}, Second ed. Grundlehren der Mathematischen
  Wissenschaften, Vol.
  \textbf{321}.
Springer-Verlag, Berlin.
\MR{1707339 (2001a:60114)}

\bibitem{MR2723356}
\textsc{Grimmett, G.} (2010).
\emph{Probability on graphs}. Institute of Mathematical Statistics
  Textbooks, Vol.~\textbf{1}.
Cambridge University Press, Cambridge.
Random processes on graphs and lattices.
\MR{2723356}

\bibitem{MR1675079}
\textsc{Grimmett, G.~R.} \textsc{and} \textsc{Stacey, A.~M.} (1998).
Critical probabilities for site and bond percolation models.
\emph{Ann. Probab.\/}~\textbf{26},~4, 1788--1812.
\url{http://dx.doi.org/10.1214/aop/1022855883}.
\MR{1675079 (2000d:60160)}

\bibitem{MR1043524}
\textsc{Hara, T.} \textsc{and} \textsc{Slade, G.} (1990).
Mean-field critical behaviour for percolation in high dimensions.
\emph{Comm. Math. Phys.\/}~\textbf{128},~2, 333--391.\break
\url{http://projecteuclid.org/getRecord?id=euclid.cmp/1104180434}.
\MR{1043524 (91a:82037)}

\bibitem{MR1283177}
\textsc{Hara, T.} \textsc{and} \textsc{Slade, G.} (1994).
Mean-field behaviour and the lace expansion.
In \emph{Probability and phase transition ({C}ambridge, 1993)}. NATO
  Adv. Sci. Inst. Ser. C Math. Phys. Sci., Vol. \textbf{420}. Kluwer Acad.
  Publ., Dordrecht, 87--122.
\MR{1283177 (95d:82033)}
\eject

\bibitem{MR1356575}
\textsc{Hara, T.} \textsc{and} \textsc{Slade, G.} (1995).
The self-avoiding-walk and percolation critical points in high
  dimensions.
\emph{Combin. Probab. Comput.\/}~\textbf{4},~3, 197--215.
\url{http://dx.doi.org/10.1017/S0963548300001607}.
\MR{1356575 (96i:82081)}

\bibitem{MR0115221}
\textsc{Harris, T.~E.} (1960).
A lower bound for the critical probability in a certain percolation
  process.
\emph{Proc. Cambridge Philos. Soc.\/}~\emph{56}, 13--20.
\MR{0115221 (22 \#6023)}

\bibitem{arXiv:1008.2645}
\textsc{Hongler, C.} \textsc{and} \textsc{Smirnov, S.} (2010).
The energy density in the planar {I}sing model.
ArXiv preprint.

\bibitem{MR2754707}
\textsc{Ikhlef, Y.} \textsc{and} \textsc{Rajabpour, M.~A.} (2011).
Discrete holomorphic parafermions in the {A}shkin-{T}eller model and
  {SLE}.
\emph{J. Phys. A\/}~\textbf{44},~4, 042001, 11.
\url{http://dx.doi.org/10.1088/1751-8113/44/4/042001}.
\MR{2754707}

\bibitem{MR2065722}
\textsc{Kager, W.} \textsc{and} \textsc{Nienhuis, B.} (2004).
A guide to stochastic {L}\"owner evolution and its applications.
\emph{J. Statist. Phys.\/} \textbf{115}, 5-6,
1149--1229.\break
\url{http://dx.doi.org/10.1023/B:JOSS.0000028058.87266.be}.
\MR{2065722 (2005f:82037)}

\bibitem{MR0014647}
\textsc{Kakutani, S.} (1944).
Two-dimensional {B}rownian motion and harmonic functions.
\emph{Proc. Imp. Acad. Tokyo\/}~\emph{20}, 706--714.
\MR{0014647 (7,315b)}

\bibitem{MR1121940}
\textsc{Karatzas, I.} \textsc{and} \textsc{Shreve, S.~E.} (1991).
\emph{Brownian motion and stochastic calculus}, Second ed. Graduate
  Texts in Mathematics, Vol. \textbf{113}.
Springer-Verlag, New York.
\MR{1121940 (92h:60127)}

\bibitem{kemppainen.smirnov:rc.iii}
\textsc{Kemppainen, A.} \textsc{and} \textsc{Smirnov, S.} (2011).
Conformal invariance in random cluster models. {III}. {F}ull scaling
  limit.
In preparation.

\bibitem{MR1782431}
\textsc{Kenyon, R.} (2000).
Conformal invariance of domino tiling.
\emph{Ann. Probab.\/} \textbf{28},~2, 759--795.
\url{http://dx.doi.org/10.1214/aop/1019160260}.\\
\MR{1782431 (2002e:52022)}

\bibitem{MR1872739}
\textsc{Kenyon, R.} (2001).
Dominos and the {G}aussian free field.
\emph{Ann. Probab.\/} \textbf{29},~3, 1128--1137.
\url{http://dx.doi.org/10.1214/aop/1015345599}.\\
\MR{1872739 (2002k:82039)}

\bibitem{arXiv:1105.4158}
\textsc{Kenyon, R.} (2011).
Conformal invariance of loops in the double-dimer model.
ArXiv preprint.

\bibitem{MR2737268}
\textsc{Kenyon, R.~W.} \textsc{and} \textsc{Wilson, D.~B.} (2011).
Boundary partitions in trees and dimers.
\emph{Trans. Amer. Math. Soc.\/} \textbf{363}, 3,
1325--1364.
\href{http://dx.doi.org/10.1090/S0002-9947-2010-04964-5}{http://dx.}\break
\href{http://dx.doi.org/10.1090/S0002-9947-2010-04964-5}{doi.org/10.1090/S0002-9947-2010-04964-5}.
\MR{2737268 (2011i:60023)}

\bibitem{MR575895}
\textsc{Kesten, H.} (1980).
The critical probability of bond percolation on the square lattice
  equals {${1\over 2}$}.
\emph{Comm. Math. Phys.\/}~\textbf{74},~1, 41--59.\break
\url{http://projecteuclid.org/getRecord?id=euclid.cmp/1103907931}.
\MR{575895 (82c:60179)}

\bibitem{MR692943}
\textsc{Kesten, H.} (1982).
\emph{Percolation theory for mathematicians}. Progress in Probability
  and Statistics, Vol.~\textbf{2}.
Birkh\"auser Boston, Mass.
\MR{692943 (84i:60145)}

\bibitem{MR1064563}
\textsc{Kesten, H.} (1990).
Asymptotics in high dimensions for percolation.
In \emph{Disorder in physical systems}. Oxford Sci. Publ. Oxford
  Univ. Press, New York, 219--240.
\MR{1064563 (91k:60114)}

\bibitem{MR1637089}
\textsc{Kesten, H.}, \textsc{Sidoravicius, V.}, \textsc{and} \textsc{Zhang, Y.}
  (1998).
Almost all words are seen in critical site percolation on the
  triangular lattice.
\emph{Electron. J. Probab.\/}~\emph{3}, no. 10, 75 pp.
(electronic).
\href{http://www.math.washington.edu/\textasciitilde ejpecp/EjpVol3/paper10.abs.html}{http://www.math.washington.}\break
\href{http://www.math.washington.edu/\textasciitilde ejpecp/EjpVol3/paper10.abs.html}{edu/\textasciitilde ejpecp/EjpVol3/paper10.abs.html}.
\MR{1637089 (99j:60155)}

\bibitem{MR2576752}
\textsc{Lalley, S.}, \textsc{Lawler, G.}, \textsc{and} \textsc{Narayanan, H.}
  (2009).
Geometric interpretation of half-plane capacity.
\emph{Electron. Commun. Probab.\/}~\emph{14}, 566--571.
\MR{2576752 (2011b:60332)}

\bibitem{MR1230963}
\textsc{Langlands, R.}, \textsc{Pouliot, P.}, \textsc{and} \textsc{Saint-Aubin,
  Y.} (1994).
Conformal invariance in two-dimensional percolation.
\emph{Bull. Amer. Math. Soc. (N.S.)\/}~\textbf{30},~1, 1--61.
\url{http://dx.doi.org/10.1090/S0273-0979-1994-00456-2}.
\MR{1230963 (94e:82056)}

\bibitem{MR2523461}
\textsc{Lawler, G.} (2009).
Schramm-{L}oewner evolution ({SLE}).
In \emph{Statistical mechanics}. IAS/Park City Math. Ser.,
  Vol.~\textbf{16}. Amer. Math. Soc., Providence, RI, 231--295.
\MR{2523461 (2011d:60244)}

\bibitem{MR1992830}
\textsc{Lawler, G.}, \textsc{Schramm, O.}, \textsc{and} \textsc{Werner, W.}
  (2003).
Conformal restriction: the chordal case.
 \emph{J. Amer. Math. Soc.\/}~\textbf{16},~4, 917--955 (electronic).
\mbox{\url{http://dx.doi.org/10.1090/S0894-0347-03-00430-2}.
\MR{1992830 (2004g:60130)}}

\bibitem{MR2129588}
\textsc{Lawler, G.~F.} (2005).
\emph{Conformally invariant processes in the plane}. Mathematical
  Surveys and Monographs, Vol. \textbf{114}.
American Mathematical Society, Providence, RI.
\MR{2129588 (2006i:60003)}

\bibitem{MR1879850}
\textsc{Lawler, G.~F.}, \textsc{Schramm, O.}, \textsc{and} \textsc{Werner, W.}
  (2001a).
Values of {B}rownian intersection exponents. {I}. {H}alf-plane
  exponents.
\emph{Acta Math.\/}~\textbf{187},~2, 237--273.
\url{http://dx.doi.org/10.1007/BF02392618}.\\
\MR{1879850 (2002m:60159a)}

\bibitem{MR1879851}
\textsc{Lawler, G.~F.}, \textsc{Schramm, O.}, \textsc{and} \textsc{Werner, W.}
  (2001b).
Values of {B}rownian intersection exponents. {II}. {P}lane exponents.
\emph{Acta Math.\/}~\textbf{187},~2, 275--308.
\url{http://dx.doi.org/10.1007/BF02392619}.\\
\MR{1879851 (2002m:60159b)}

\bibitem{MR1887622}
\textsc{Lawler, G.~F.}, \textsc{Schramm, O.}, \textsc{and} \textsc{Werner, W.}
  (2002a).
One-arm exponent for critical 2{D} percolation.
\emph{Electron. J. Probab.\/}~\emph{7}, no. 2, 13 pp. (electronic).
\href{http://www.math.washington.edu/\textasciitilde ejpecp/EjpVol7/paper2.abs.html}{http://www.math.washington.edu/\textasciitilde ejpecp/EjpVol7/paper2.}\break
\href{http://www.math.washington.edu/\textasciitilde ejpecp/EjpVol7/paper2.abs.html}{abs.html}.
\MR{1887622 (2002k:60204)}

\bibitem{MR1899232}
\textsc{Lawler, G.~F.}, \textsc{Schramm, O.}, \textsc{and} \textsc{Werner, W.}
  (2002b).
Values of {B}rownian intersection exponents. {III}. {T}wo-sided
  exponents.
\emph{Ann. Inst. H. Poincar\'e Probab. Statist.\/}~\textbf{38},~1,
  109--123.
\href{http://dx.doi.org/10.1016/S0246-0203(01)01089-5}{http://dx.doi.org/10.1016/}\break
\href{http://dx.doi.org/10.1016/S0246-0203(01)01089-5}{S0246-0203(01)01089-5}.
\MR{1899232 (2003d:60163)}

\bibitem{MR2044671}
\textsc{Lawler, G.~F.}, \textsc{Schramm, O.}, \textsc{and} \textsc{Werner, W.}
  (2004).
Conformal invariance of planar loop-erased random walks and uniform
  spanning trees.
\emph{Ann. Probab.\/}~\textbf{32},~1B, 939--995.
\url{http://dx.doi.org/10.1214/aop/1079021469}.
\MR{2044671 (2005f:82043)}

\bibitem{arXiv:0906.3804}
\textsc{Lawler, G.~F.} \textsc{and} \textsc{Sheffield, S.} (2009).
The natural parametrization for the {S}chramm-{L}oewner evolution.
ArXiv preprint.

\bibitem{MR1796962}
\textsc{Lawler, G.~F.} \textsc{and} \textsc{Werner, W.} (2000).
Universality for conformally invariant intersection exponents.
\emph{J. Eur. Math. Soc. (JEMS)\/}~\textbf{2},~4, 291--328.
\url{http://dx.doi.org/10.1007/s100970000024}.
\MR{1796962 (2002g:60123)}

\bibitem{MR1188411}
\textsc{L{\'e}vy, P.} (1992).
\emph{Processus stochastiques et mouvement brownien}.
Les Grands Classiques Gauthier-Villars. \'Editions Jacques Gabay, Sceaux.
Followed by a note by M. Lo{\`e}ve, Reprint of the second (1965)
  edition.
\MR{1188411 (93i:60003)}

\bibitem{MR1817225}
\textsc{Lieb, E.~H.} \textsc{and} \textsc{Loss, M.} (2001).
\emph{Analysis}, Second ed. Graduate Studies in Mathematics,
  Vol.~\textbf{14}.
American Mathematical Society, Providence, RI.
\MR{1817225 (2001i:00001)}

\bibitem{MR2163382}
\textsc{Marshall, D.~E.} \textsc{and} \textsc{Rohde, S.} (2005).
The {L}oewner differential equation and slit mappings.
\emph{J. Amer. Math. Soc.\/}~\textbf{18},~4, 763--778 (electronic).
\mbox{\url{http://dx.doi.org/10.1090/S0894-0347-05-00492-3}.
\MR{2163382 (2006d:30022)}}

\bibitem{mendelson.nachmias.watson}
\textsc{Mendelson, D.}, \textsc{Nachmias, A.}, \textsc{Sheffield, S.},
  \textsc{and} \textsc{Watson, S.} (2011).
Convergence rate in {C}ardy's formula.
In preparation.

\bibitem{arXiv:1002.0381}
\textsc{Miller, J.} (2010a).
Fluctuations for the {G}inzburg-{L}andau $\nabla \phi$ interface
  model on a bounded domain.
ArXiv preprint.

\bibitem{arXiv:1010.1356}
\textsc{Miller, J.} (2010b).
Universality for {SLE}(4).
ArXiv preprint.

\bibitem{miller.sheffield}
\textsc{Miller, J.} \textsc{and} \textsc{Sheffield, S.} (2011).
{CLE}(4) and the {G}aussian free field.
In preparation.

\bibitem{MR2604525}
\textsc{M{\"o}rters, P.} \textsc{and} \textsc{Peres, Y.} (2010).
\emph{Brownian motion}.
Cambridge Series in Statistical and Probabilistic Mathematics.
  Cambridge University Press, Cambridge.
With an appendix by Oded Schramm and Wendelin Werner.
\MR{2604525}

\bibitem{munkres}
\textsc{Munkres, J.~R.} (2000).
\emph{Topology}, Second ed.
Prentice Hall, Upper Saddle River, NJ.

\bibitem{MR2570320}
\textsc{Nachmias, A.} (2009).
Mean-field conditions for percolation on finite graphs.
\emph{Geom. Funct. Anal.\/}~\textbf{19},~4, 1171--1194.
\href{http://dx.doi.org/10.1007/s00039-009-0032-4}{http://dx.doi.org/10.1007/}\break
\href{http://dx.doi.org/10.1007/s00039-009-0032-4}{s00039-009-0032-4}.
\MR{2570320 (2011e:60228)}

\bibitem{MR1217706}
\textsc{Pommerenke, C.} (1992).
\emph{Boundary behaviour of conformal maps}. Grundlehren der
  Mathematischen Wissenschaften, Vol. \textbf{299}.
Springer-Verlag, Berlin.
\MR{1217706 (95b:30008)}

\bibitem{MR1751301}
\textsc{Reimer, D.} (2000).
Proof of the van den {B}erg-{K}esten conjecture.
\emph{Combin. Probab. Comput.\/}~\textbf{9},~1, 27--32.
\url{http://dx.doi.org/10.1017/S0963548399004113}.
\MR{1751301 (2001g:60017)}

\bibitem{MR2153402}
\textsc{Rohde, S.} \textsc{and} \textsc{Schramm, O.} (2005).
Basic properties of {SLE}.
\emph{Ann. of Math. (2)\/}~\textbf{161},~2, 883--924.
\url{http://dx.doi.org/10.4007/annals.2005.161.883}.
\MR{2153402 (2006f:60093)}

\bibitem{MR0488383}
\textsc{Russo, L.} (1978).
A note on percolation.
\emph{Z. Wahrscheinlichkeitstheorie und Verw.
  Gebiete\/}~\textbf{43},~1, 39--48.
\MR{0488383 (58 \#7931)}

\bibitem{MR1776084}
\textsc{Schramm, O.} (2000).
Scaling limits of loop-erased random walks and uniform spanning
  trees.
\emph{Israel J. Math.\/}~\emph{118}, 221--288.
\url{http://dx.doi.org/10.1007/BF02803524}.
\MR{1776084 (2001m:60227)}

\bibitem{MR1871700}
\textsc{Schramm, O.} (2001).
A percolation formula.
\emph{Electron. Comm. Probab.\/}~\emph{6}, 115--120 (electronic).
\MR{1871700 (2002h:60227)}

\bibitem{MR2334202}
\textsc{Schramm, O.} (2007).
Conformally invariant scaling limits: an overview and a collection of
  problems.
In \emph{International {C}ongress of {M}athematicians}. \mbox{\emph{Vol. {I}}.
  Eur. Math. Soc., Z\"urich, 513--543.
\url{http://dx.doi.org/10.4171/022-1/20}.}
\MR{2334202 (2008j:60237)}

\bibitem{MR2184093}
\textsc{Schramm, O.} \textsc{and} \textsc{Sheffield, S.} (2005).
Harmonic explorer and its convergence to {${\rm SLE}\sb 4$}.
\emph{Ann. Probab.\/}~\textbf{33},~6, 2127--2148.
\url{http://dx.doi.org/10.1214/009117905000000477}.
\MR{2184093 (2006i:60013)}

\bibitem{MR2486487}
\textsc{Schramm, O.} \textsc{and} \textsc{Sheffield, S.} (2009).
Contour lines of the two-dimensional discrete {G}aussian free field.
\emph{Acta Math.\/}~\textbf{202},~1, 21--137.
\url{http://dx.doi.org/10.1007/s11511-009-0034-y}.
\MR{2486487 (2010f:60238)}

\bibitem{arXiv:1008.2447}
\textsc{Schramm, O.} \textsc{and} \textsc{Sheffield, S.} (2010).
A contour line of the continuum {G}aussian free field.
ArXiv preprint.

\bibitem{MR2188260}
\textsc{Schramm, O.} \textsc{and} \textsc{Wilson, D.~B.} (2005).
S{LE} coordinate changes.
\emph{New York J. Math.\/}~\emph{11}, 659--669 (electronic).
\href{http://nyjm.albany.edu:8000/j/2005/11\_659.html}{http://nyjm.albany.edu:8000/j/}\break
\href{http://nyjm.albany.edu:8000/j/2005/11\_659.html}{2005/11\_659.html}.
\MR{2188260 (2007e:82019)}

\bibitem{MR0494572}
\textsc{Seymour, P.~D.} \textsc{and} \textsc{Welsh, D. J.~A.} (1978).
Percolation probabilities on the square lattice.
\emph{Ann. Discrete Math.\/}~\emph{3}, 227--245.
Advances in graph theory (Cambridge Combinatorial Conf., Trinity
  College, Cambridge, 1977).
\MR{0494572 (58 \#13410)}

\bibitem{MR2494457}
\textsc{Sheffield, S.} (2009).
Exploration trees and conformal loop ensembles.
\emph{Duke Math. J.\/}~\textbf{147},~1, 79--129.
\url{http://dx.doi.org/10.1215/00127094-2009-007}.
\MR{2494457 (2010g:60184)}

\bibitem{arXiv:1006.2374}
\textsc{Sheffield, S.} \textsc{and} \textsc{Werner, W.} (2010).
Conformal loop ensembles: The {M}arkovian characterization and the
  loop-soup construction.
To appear, \textit{Ann.\ of Math.}

\bibitem{arXiv:1003.3271}
\textsc{Sheffield, S.} \textsc{and} \textsc{Wilson, D.~B.} (2010).
{S}chramm's proof of {W}atts' formula.
ArXiv preprint.

\bibitem{MR1851632}
\textsc{Smirnov, S.} (2001).
Critical percolation in the plane: conformal invariance, {C}ardy's
  formula, scaling limits.
\emph{C. R. Acad. Sci. Paris S\'er. I Math.\/}~\textbf{333},~3,
  239--244.
\url{http://dx.doi.org/10.1016/S0764-4442(01)01991-7}.
\MR{1851632 (2002f:60193)}

\bibitem{MR2227824}
\textsc{Smirnov, S.} (2005).
Critical percolation and conformal invariance.
In \emph{X{IV}th {I}nternational {C}ongress on {M}athematical
  {P}hysics}. World Sci. Publ., Hackensack, NJ, 99--112.
\MR{2227824 (2007h:82030)}

\bibitem{MR2275653}
\textsc{Smirnov, S.} (2006).
Towards conformal invariance of 2{D} lattice models.
In \emph{International {C}ongress of {M}athematicians. {V}ol. {II}}.
  Eur. Math. Soc., Z\"urich, 1421--1451.
\MR{2275653 (2008g:82026)}

\bibitem{arXiv:0909.4499}
\textsc{Smirnov, S.} (2009).
Critical percolation in the plane.
ArXiv preprint.

\bibitem{MR2680496}
\textsc{Smirnov, S.} (2010a).
Conformal invariance in random cluster models. {I}. {H}olomorphic
  fermions in the {I}sing model.
\emph{Ann. of Math. (2)\/}~\textbf{172},~2, 1435--1467.
\url{http://dx.doi.org/10.4007/annals.2010.172.1441}.
\MR{2680496}

\bibitem{arXiv:1009.6077}
\textsc{Smirnov, S.} (2010b).
Discrete complex analysis and probability.
ArXiv preprint.

\bibitem{smirnov:rc.ii}
\textsc{Smirnov, S.} (2011).
Conformal invariance in random cluster models. {II}. {S}caling limit
  of the interface.
In preparation.

\bibitem{nla.cat-vn1787821}
\textsc{Stauffer, D.} \textsc{and} \textsc{Aharony, A.} (1992).
\emph{Introduction to percolation theory}, 2nd ed.
Taylor \& Francis, London.

\bibitem{MR1976398}
\textsc{Stein, E.~M.} \textsc{and} \textsc{Shakarchi, R.} (2003a).
\emph{Complex analysis}.
Princeton Lectures in Analysis, II. Princeton University Press,
  Princeton, NJ.
\MR{1976398 (2004d:30002)}

\bibitem{MR1970295}
\textsc{Stein, E.~M.} \textsc{and} \textsc{Shakarchi, R.} (2003b).
\emph{Fourier analysis}. Princeton Lectures in Analysis,
  Vol.~\textbf{1}.
Princeton University Press, Princeton, NJ.
An introduction.
\MR{1970295 (2004a:42001)}

\bibitem{MR799280}
\textsc{van~den Berg, J.} \textsc{and} \textsc{Kesten, H.} (1985).
Inequalities with applications to percolation and reliability.
\emph{J. Appl. Probab.\/}~\textbf{22},~3, 556--569.
\MR{799280 (87b:60027)}

\bibitem{MR2079672}
\textsc{Werner, W.} (2004).
Random planar curves and {S}chramm-{L}oewner evolutions.
In \emph{Lectures on probability theory and statistics}. Lecture
  Notes in Math., Vol. \textbf{1840}. Springer, Berlin, 107--195.
\MR{2079672 (2005m:60020)}

\bibitem{MR2523462}
\textsc{Werner, W.} (2009).
Lectures on two-dimensional critical percolation.
In \emph{Statistical mechanics}. IAS/Park City Math. Ser.,
  Vol.~\textbf{16}. Amer. Math. Soc., Providence, RI, 297--360.
\MR{2523462 (2011e:82042)}

\bibitem{arXiv:1102.3782}
\textsc{Wilson, D.~B.} (2011).
{XOR}-{I}sing loops and the {G}aussian free field.
ArXiv preprint.

\bibitem{MR2439609}
\textsc{Zhan, D.} (2008).
Duality of chordal {SLE}.
\emph{Invent. Math.\/}~\textbf{174},~2, 309--353.
\url{http://dx.doi.org/10.1007/s00222-008-0132-z}.
\MR{2439609 (2010f:60239)}

\bibitem{MR2682265}
\textsc{Zhan, D.} (2010).
Duality of chordal {SLE}, {II}.
\emph{Ann. Inst. Henri Poincar\'e Probab. Stat.\/}~\textbf{46},~3,
  740--759.
\url{http://dx.doi.org/10.1214/09-AIHP340}.
\MR{2682265}

\end{thebibliography}
\bibliographystyle{acmtrans-ims}

\vspace*{-6pt}

\end{document}